\documentclass{amsart}
\usepackage{amssymb}
\usepackage{bbm}
\usepackage[all]{xy}
\usepackage{amsfonts}
\usepackage{amsfonts}
\usepackage{amsfonts}
\usepackage{amscd}
\usepackage{mathrsfs}
\usepackage{amsmath}
\usepackage{graphicx}
\usepackage{subfigure}
\usepackage[english]{babel}
\usepackage{times}
\usepackage[T1]{fontenc}
\usepackage{ctex}
\usepackage{enumerate}
\usepackage{enumitem}
\usepackage{amssymb}
\usepackage{bbm}
\usepackage{amsfonts}
\usepackage{amsfonts}
\usepackage{amsfonts}
\usepackage{amscd}
\usepackage{mathrsfs}
\usepackage{amsmath}
\usepackage{bm}

\numberwithin{equation}{section}

\newtheorem{theorem}{Theorem}[section]
\newtheorem{pro}[theorem]{Proposition}
\newtheorem{exa}[theorem]{Example}
\newtheorem{cor}[theorem]{Corollary}
\newtheorem{lem}[theorem]{Lemma}
\newtheorem{dfn}[theorem]{Definition}
\theoremstyle{remark}
\newtheorem{rem}[theorem]{Remark}

\newcommand{\Z}{\mathbb{Z}}
\newcommand{\LL}{\mathcal{L}}
\newcommand{\p}{\partial}
\newcommand{\M}{\lambda}
\newcommand{\C}{\mathbb{C}}
\newcommand{\N}{\mathbb{N}}
\begin{document}
		
\title{classification of  uniformly bounded simple Lie conformal algebras with upper bound one}

\author{ Maosen Xu,  Huangjie Yu, Lipeng Luo }
\address{Department of Mathematics informations, Shaoxing University, Shaoxing, 312000, P.R.China} \email{390596169@qq.com}
\address{Department of Mathematics informations, Shaoxing University, Shaoxing, 312000, P.R.China} \email{1498190639@qq.com}
\address{Department of Mathematics, Zhejiang University of Science and Technology, Hangzhou, 310023, P.R.China.}
\email{luolipeng@zust.edu.cn}

\keywords{uniformly bounded simple Lie conformal algebra,upper bound one, basic Lie algebra, finitely generated}
\thanks{The third author is corresponding author. This work was supported by the National Natural Science Foundation of China
	(Nos. 12171129, 12271406, 12471027, 12401035) and the Natural Science Foundation of Shanghai (No. 24ZR1471900).}


\begin{abstract}In this paper, we prove that  uniformly bounded simple Lie conformal algebra must be finitely generated. Furthermore, we give a completely classification of uniformly bounded simple Lie conformal algebras with upper bound one. 
\end{abstract}
\maketitle

\section{introduction}
 The notion of Lie conformal algebra was initially   introduced to provide an axiomatic 
properties of the operator product expansion in \cite{K}. There are many other fields closely related to Lie conformal algebras such as vertex algebras, linearly compact Lie algebras and Gelfand-Dorfman algebras. 

 In \cite{BDK}, the notion of Lie conformal algebra is explained as the  Lie algebra over a pseudo-tensor category $\mu^*(H)$. Therefore, some scholars hope to obtain Lie conformal algebra versions of important theorems in  complex Lie algebras. At present, Lie conformal algebra version of Lie theorem and  Cartan-Jacobson's theorem have been proved in \cite{DK}. In addition, P. Kolesnikov  show that the Ado theorem of Lie conformal algebra with Levi decomposition also holds\cite{PK}. In addition, D'andrea and Kac classified  any  finite simple Lie conformal algebra must be either isomorphic to Virasoro Lie conformal algebras $Vir$ or to current Lie conformal  algebra $Cur\mathfrak{g}$, where $\mathfrak{g}$ is a complex simple Lie algebra. However, it is basically impossible  to classify all infinite simple Lie conformal algebras without any restriction.
   We say a  Lie conformal algebra is uniformly bounded if it  is a $\Z$-graded Lie conformal algebra such that the rank of each component is bounded by some positive integer. Besides the current Lie conformal algebras, there are some other types of uniformly bounded simple  Lie conformal algebras are found. The Loop Virasoro Lie conformal algebra 
\[ \mathcal{V}(s)=\bigoplus_{i \in \mathbb{Z}}\mathbb{C}[\partial]L_i, s\in \mathbb{C}:  \text{for}\ \  i,j\in \mathbb{Z},\ \ [{L_i}_\lambda L_j]=(\partial+2\lambda+s(i-j))L_{i+j}.\]
 can be found in \cite{WCY}. 
    Further, through some simple Novikov algebras and Gel'fand-Dorfman algebras, Hong and Wu obtain  the  Lie conformal algebras
   \begin{enumerate}[fullwidth,itemindent=0em,label=(\arabic*)]
   \item[$\bullet$] $CL_1(s)=\bigoplus_{i \in \mathbb{Z}_{\geq -1}}\mathbb{C}[\partial]L_i: [{L_i}_\lambda L_j]=((i+1)\partial+(i+j+2)\lambda+s(i-j))L_{i+j},$
   \item[$\bullet$] $CL_2(b,s)=\bigoplus_{i \in \mathbb{Z}}\mathbb{C}[\partial]L_i:[{L_i}_\lambda L_j]=((i+b)\partial+(i+j+2b)\lambda+s(i-j))L_{i+j}$.
   When $2b\in \Z$, $CL_2(b,s)$ is not simple. It has the unique proper graded ideal:  $SCL_2(b,s)=\bigoplus_{i\neq -2b}\mathbb{C}[\partial]L_i \oplus \mathbb{C}[\partial](\partial+2s)L_{-2b}$\end{enumerate}
   Further, the following classification results established in \cite{X}.
      \begin{pro}\label{Xu}Suppose that $\LL=\bigoplus_{i\in \mathbb{Z}}\LL_i$ be a simple graded Lie conformal algebra with upper bound one. If $\LL_{0}\cong Vir$, then $\LL$ must be isomorphic to one of follows:
   	\[SCL_2(b,s),\  \mathcal{V}(s),\ CL_2(b_1,s),\  Vir,\ CL_1(s) \]
   	where $b,b_1,s\in \C$ and $2b\in \Z$, $2b_1\notin \Z$.
   \end{pro}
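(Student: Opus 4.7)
The plan is to exploit that each nonzero component $\LL_i$ is a free rank-one conformal module over $\LL_0\cong Vir$. Fix a Virasoro generator $L$ of $\LL_0$ with $[L_\lambda L]=(\partial+2\lambda)L$, and for each $i\neq 0$ with $\LL_i\neq 0$ pick a $\C[\partial]$-generator $L_i$. Solving the Jacobi equation for the $Vir$-action one obtains the classification of free rank-one $Vir$-modules: either $[L_\lambda L_i]=(\partial+\alpha_i+\Delta_i\lambda)L_i$ for some $\alpha_i,\Delta_i\in\C$, or $[L_\lambda L_i]=0$. The trivial-action case can be ruled out because such an $L_i$, together with its graded translates under brackets with the other $L_j$, spans a graded ideal; conformal skew-symmetry plus simplicity then force a contradiction.

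Next, the grading dictates $[L_i{}_\lambda L_j]=h_{ij}(\partial,\lambda)L_{i+j}$ (with convention $L_0:=L$, and $h_{ij}=0$ when $\LL_{i+j}=0$). Applying the conformal Jacobi identity to $(L,L_i,L_j)$ gives the master constraint
\begin{equation*}
h_{ij}(\partial+\mu,\lambda)(\partial+\alpha_{i+j}+\Delta_{i+j}\mu)=\bigl((\Delta_i-1)\mu+\alpha_i-\lambda\bigr)h_{ij}(\partial,\mu+\lambda)+(\partial+\lambda+\alpha_j+\Delta_j\mu)h_{ij}(\partial,\lambda).
\end{equation*}
Specializing $\mu=0$ forces $\alpha_{i+j}=\alpha_i+\alpha_j$ whenever $h_{ij}\neq 0$, so $\alpha$ is additive on the support. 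A degree comparison in the master equation (upper-bounding the joint degree of $h_{ij}$ in $\partial$ and $\lambda$) then forces $h_{ij}$ to be affine; write $h_{ij}=A_{ij}\partial+B_{ij}\lambda+C_{ij}$, and substitute back to obtain explicit linear relations among $(A_{ij},B_{ij},C_{ij})$ and $(\Delta_i,\Delta_j,\Delta_{i+j},\alpha_i,\alpha_j)$.

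Combining these with conformal skew-symmetry (relating $h_{ji}$ to $h_{ij}$ via the substitution $\mu\mapsto-\partial-\lambda$) and with the Jacobi identity on triples $(L_i,L_j,L_k)$, one shows that $\Delta_i$ must be affine in $i$ on the support, with three generic shapes: $\Delta_i\equiv 2$ with $\alpha_i=-si$ recovers $\mathcal{V}(s)$ or $Vir$; $\Delta_i=i+2$ recovers $CL_1(s)$, where the truncation to support $\Z_{\geq-1}$ follows because $h_{ij}$ degenerates for $i\leq-2$; and $\Delta_i=i/b+2$ with $b\in\C$ recovers $CL_2(b_1,s)$ when $2b_1\notin\Z$. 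When $2b\in\Z$ the would-be value $\Delta_{-2b}=0$ signals an exceptional rank-one $Vir$-module at the index $-2b$: the generator $L_{-2b}$ should be replaced by $(\partial+2s)L_{-2b}$, which exhibits $\Delta_{-2b}=1$ after renormalization and cuts out $SCL_2(b,s)$ as the simple piece. The admissible support of $\LL$ is finally pinned down by simplicity together with the vanishing pattern of the $h_{ij}$.

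The main obstacle will be the simultaneous determination of $\Delta_i$ and the zero-pattern of the $h_{ij}$: vanishing structure constants can mask the underlying affine dependence of $\Delta_i$ on $i$, and the exceptional degeneration at $i=-2b$ producing $SCL_2(b,s)$ must be distinguished from a genuine absence of that graded component. A secondary subtlety is controlling the brackets $[L_i{}_\lambda L_{-i}]\in\C[\partial]L$, whose coefficients can a priori be polynomials of arbitrary degree in $\partial$; these are shown to be at most affine only after invoking the full Jacobi identity among three graded generators, together with the rigidity of $Vir$.
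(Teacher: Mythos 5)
You should first note that the paper does not prove this proposition at all: it is imported verbatim from \cite{X} (``the classification was completed in \cite{X} when $\LL_0\cong Vir$''), so there is no internal proof to compare against. Your outline — identify each nonzero $\LL_i$ with a free rank-one $Vir$-module $M_{\Delta_i,\alpha_i}$, write $[{L_i}_\lambda L_j]=h_{ij}(\partial,\lambda)L_{i+j}$, and squeeze the $h_{ij}$ with the Jacobi identity against $L_0$ — is the natural route and almost certainly the one taken in \cite{X}; your master equation and the additivity $\alpha_{i+j}=\alpha_i+\alpha_j$ are correct.

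However, as a proof the proposal has genuine gaps. The central claim that ``a degree comparison in the master equation forces $h_{ij}$ to be affine'' is false, and it is falsified by $SCL_2(b,s)$ itself: with respect to the free generator $v=(\partial+2s)L_{-2b}$ of the exceptional component one has $\Delta_{-2b}=1$, and then $[v_\lambda L_j]=(2s-\lambda)\bigl(-b\partial+j\lambda-s(2b+j)\bigr)L_{j-2b}$ has total degree two. The reason is visible in your own master equation: when $\Delta_i=1$ the coefficient $(\Delta_i-1)\mu+\alpha_i-\lambda$ loses its $\mu$-term, and the comparison that yields degree $\le 1$ in the generic case degenerates. So any correct degree bound must be conditional on the values of $\Delta_i,\Delta_j,\Delta_{i+j}$ (compare the analogous conditional statements in Lemma \ref{l4.3} and the bound of Proposition \ref{p4.6} in the abelian case), and the exceptional components need a separate device (compare how Corollary \ref{c4.12} handles $SCL_2(0,s)$ by embedding $\LL$ into an auxiliary algebra $\mathcal{W}$ with affine structure polynomials); deriving affineness first and invoking the degeneration at $-2b$ afterwards, as you do, is logically inconsistent. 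Secondly, the actual classification content — that $\Delta_i$ must be affine in $i$ on the support, that the support is $\{0\}$, $\Z_{\geq -1}$ or $\Z$ (up to regrading), and that no other zero-patterns of the $h_{ij}$ survive graded simplicity — is asserted (``one shows that $\Delta_i$ must be affine in $i$'') rather than argued; you correctly identify this masking problem as the main obstacle, but you do not resolve it, and it is exactly where the work lies. A smaller fixable point: to exclude components with trivial $\LL_0$-action you need the Jacobi computation showing that brackets between trivial and nontrivial components vanish into every component, so that the trivial components themselves form a proper graded ideal; the ideal you describe, generated by $L_i$ and its ``graded translates,'' is not obviously proper, so no contradiction follows as stated. (Freeness of each $\LL_i$, i.e.\ disposing of $\C[\partial]$-torsion via centrality and graded simplicity, should also be said explicitly before invoking the $M_{a,b}$ classification.)
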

  The nature question  is that can we give a complete classification with out the restriction on $\LL_0$ in Proposition \ref{Xu}.
   Using some ideals from classification of simple graded Lie conformal algebras of finite growth\cite{M}\cite{M1}\cite{M2}, the main results of this paper is  the following theorem.  
   \begin{theorem}{\label{t01}}Suppose that $\LL=\bigoplus_{i\in \mathbb{Z}}\LL_i$ be a simple graded Lie conformal algebra with upper bound  one. Then $\LL$ must be isomorphic to one of follows:
   	\[SCL_2(b,s),\  Cur\mathfrak{g},\  \mathcal{V}(s),\ CL_2(b_1,s),\  Vir,\ CL_1(s) \]
   	where $b,b_1,s\in \C$ and $2b\in \Z$, $2b_1\notin \Z$ and $\mathfrak{g}$ is a uniformly bounded simple Lie algebras with upper bound  one.
   \end{theorem}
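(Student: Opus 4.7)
The natural strategy is a case split on the structure of the degree-zero component $\LL_0$, since $\LL_0$ is itself a Lie conformal subalgebra of rank at most one. The case $\LL_0 \cong Vir$ is already settled by Proposition \ref{Xu}, which produces the algebras $SCL_2(b,s)$, $\mathcal{V}(s)$, $CL_2(b_1,s)$, $Vir$, and $CL_1(s)$. What remains is to prove that if $\LL_0 \not\cong Vir$, then $\LL \cong Cur\mathfrak{g}$ for some uniformly bounded simple graded Lie algebra $\mathfrak{g}$ with upper bound one.

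The first step is to pin down $\LL_0$ in the remaining case. Writing its self-bracket as $[L_{0\,\M} L_0] = p(\p,\M) L_0$, skew-symmetry $p(\p,\M) = -p(\p,-\p-\M)$ together with the Jacobi identity forces $p$ to be either zero or, after rescaling $L_0$, the Virasoro polynomial $\p + 2\M$. Since we are excluding the Virasoro case, $\LL_0$ must be abelian.

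Next, write $\LL_i = \C[\p] L_i$ (possibly zero) and set $[L_{i\,\M} L_j] = f_{i,j}(\p,\M) L_{i+j}$. The goal is to show every $f_{i,j}$ is a constant in $\C$, which immediately identifies $\LL$ with $Cur\mathfrak{g}$, where $\mathfrak{g} = \bigoplus_i \C L_i$ carries the Lie bracket $[L_i,L_j] = f_{i,j} L_{i+j}$. To this end, I would first apply the Jacobi identity on triples $(L_0, L_i, L_j)$, exploiting the abelianness of $\LL_0$, to control the $\M$- and $\p$-dependence of $f_{0,i}$ and of $f_{i,j}$ via the adjoint action of $L_0$. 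Then the Jacobi identity on triples $(L_i, L_j, L_k)$, combined with skew-symmetry $f_{i,j}(\p,\M) = -f_{j,i}(\p,-\p-\M)$ and the finite-growth analysis techniques of \cite{M}, \cite{M1}, \cite{M2}, should eliminate all remaining nonconstant dependence on $\M$ and $\p$.

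Once the $f_{i,j}$ are constant, simplicity of $\LL$ transfers directly to simplicity of $\mathfrak{g}$ as a $\Z$-graded Lie algebra, and the rank-one bound on the $\LL_i$ gives $\dim \mathfrak{g}_i \leq 1$ for each $i$, which is precisely the upper-bound-one condition on $\mathfrak{g}$. The main obstacle will be the middle step: ruling out nontrivial $\M$- or $\p$-dependence in the brackets $f_{i,j}$ when $\LL_0$ is abelian. In contrast with the Virasoro case, there is no conformal weight to organize the induction, so the argument must exploit the finite-growth hypothesis and the finitely generated property established earlier in the paper, together with the structural techniques of \cite{M}, \cite{M1}, \cite{M2}, to rigidify the brackets into current form.
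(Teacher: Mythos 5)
Your case split is the right starting point, but the second half of your dichotomy --- ``if $\LL_0\not\cong Vir$ then $\LL_0$ is abelian and every structure polynomial $f_{i,j}$ can be shown to be constant, so $\LL\cong Cur\mathfrak{g}$'' --- is false, and this is precisely where the real work of the paper lives. The algebra $SCL_2(0,s)$ (the case $b=0$ of the $SCL_2(b,s)$ appearing in the statement you are proving) is a simple graded Lie conformal algebra with upper bound one whose zero component is abelian: it is spanned over $\C[\p]$ by $L_0'=(\p+2s)L_0$ inside $CL_2(0,s)$, and $[{L_0'}_\M L_0']=0$; yet its brackets are not constant, e.g. $[{L_0'}_\M L_1]=(-\M+2s)(\M-s)L_1$ is quadratic in $\M$. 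Since its $\LL_0$ is not isomorphic to $Vir$, it is not covered by Proposition \ref{Xu} either, so your plan would wrongly force it to be a current algebra. No manipulation of the Jacobi identities on $(L_0,L_i,L_j)$ can rescue the claim: as the paper shows, abelianness of $\LL_0$ only bounds degrees (Lemma \ref{l4.3} and Proposition \ref{p4.6} give $\deg p_{-k,k}+\deg p_{0,k}\le 2$), leaving genuinely non-current candidates ($SCL_2(0,s)$, $CL_3(s)$, $ECL(s)$, $M(1)$, $M(2)$, \dots) alive. The paper's Section 4 is devoted to classifying these local possibilities (the class $\mathcal{V}$ analysis, Theorem \ref{t4.14}) and then deciding globally, via the integral versus non-integral dichotomy (Theorems \ref{t4.23} and \ref{t4.28}), when the outcome is a current algebra and when it is $SCL_2(0,s)$; your proposal compresses all of this into the single unjustified (and untrue) assertion that the brackets rigidify to constants.

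A second, smaller gap: you take the existence of a nonzero $\LL_0$ for granted, and your $(L_0,L_i,L_j)$ arguments need it. In the paper this is Theorem \ref{t3.7} and occupies all of Section 3: one first proves that a uniformly bounded simple Lie conformal algebra is of finite type and not rootless, and only then concludes $\LL_0\neq\{0\}$ (after which \cite{DK} gives the Virasoro/abelian dichotomy for the rank-one algebra $\LL_0$). Your proposal should either reproduce or at least invoke such an argument before the case split can even begin.
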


     The paper is organized as follows.
   In Section 2,  the basic notions and propositions of ($\Z$-graded) Lie conformal algebras  are recalled. The notion of basic Lie algebra for a Lie conformal algebra is introduced. In addition, we introduce the  $\Z$-graded Lie conformal algebras involved in our classification results.  

In Section 3, we prove that any uniformly bounded simple Lie algebra must be finitely generated. As consequence, we  prove the existence of $0$-components for a  uniformly bounded simple Lie algebra.

In Section 4, we first classify the following $\Z$-graded Lie conformal algebra $\mathcal{L}=\bigoplus_{i\in \Z}\mathcal{L}_i$ provided that
\begin{enumerate}[fullwidth,itemindent=0em,label=(\arabic*)]
	\item[$({\bf C1})$]$rank \LL_i\leq 1$ for each $i \in \Z$. $rank \LL_1=rank \LL_{-1}=rank \LL_0=1$. 
	\item[$({\bf C2})$]$\LL$ is generated by $\LL_1$,$\LL_0$,$\LL_{-1}$. 
	\item[$({\bf C3})$]  $[{\LL_{-1}}_\lambda \LL_1]\neq 0,\   [{\LL_{0}}_\lambda \LL_1]\neq 0,\  [{\LL_0}_\lambda \LL_{0}]=0.$ 
\end{enumerate}
Finally, we prove that if $\LL$  is simple graded, then it must either isomorphic to current Lie conformal algebra or $SCL_2(0,s)$. 

    Through this paper, denote $\mathbb{C}$ and $\mathbb{C}^*$ the sets of all complex numbers and all nonzero complex numbers respectively. $\mathbb{R}$ is the set of real numbers. $mathbb{N}$ is the set nature number. In addition, all vector spaces and tensor products are over $\mathbb{C}$. For any vector space $V$,
    we use $V[\lambda]$ to denote the set of polynomials of $\lambda$ with coefficients in $V$.  Let $f(\partial, \lambda)\in \mathbb{C}[\lambda,\partial]$. We denote the total degree of $f(\partial, \lambda)$ by $deg f(\partial, \lambda)$, and the degree of $\lambda$ in $f(\partial, \lambda)$ by $deg_\lambda f(\partial, \lambda)$. 

 \section{Preliminary}
 In this section, we recall some basic definitions and results of Lie conformal algebras and their modules. These facts can be found in \cite{DK} and \cite{K}.
 \begin{dfn}A Lie conformal algebra is a $\C[\p]$-module $\mathcal{S}$, endowed with a family of $\C$-bilinear products $(n)$, for each $n\in \Z_+$, satisfying following axioms
\begin{enumerate}[fullwidth,itemindent=0em,label=(\arabic*)]
	\item[(C1)]$a_{(n)}b=0$ for $n\gg 0$,
	\item[(C2)]$(\partial a)_{(n)}b=-n a_{(n - 1)}b,\quad (\partial a)_{(n)}b = \partial(a_{(n)}b)+n a_{(n - 1)}b$.
	\item[(C3)]$a_{(n)}b=-\sum_{j=0}^{\infty}(-1)^{n+j}\frac{\partial^j}{j!}(b_{(n+j)}a)$.
	\item[(C4)] $a_{(m)}(b_{(n)}c)-b_{(n)}(a_{(m)}c)=\sum_{j = 0}^{\infty}\binom{m}{j}(a_{(j)}b)_{(m + n - j)}c$.
\end{enumerate}
\end{dfn}
Let $\mathcal{S}$ be a Lie conformal algebra. Regard $\mathbb{C}$ as trivial $\mathbb{C}[\partial]$-module. One can check directly that 0-th product provides a Lie algebra structure over $\mathbb{C}\otimes_{\mathbb{C}[\partial]}\mathcal{S}$.  $\mathbb{C}\otimes_{\mathbb{C}[\partial]}\mathcal{S}$ is called as the \emph{basic Lie algebra} for $\mathcal{S}$, which is denoted by $\mathcal{S}^0$. 
We can use the $\M$-brakets to describe a Lie conformal algebra in a more conciser way. 
Explicitly, let $\mathcal{S}$ be a Lie conformal algebra,	
\[[\cdot_\M \cdot]: \mathcal{S}\otimes \mathcal{S}\mapsto \mathcal{S}[\M], \] 
\[ [a_{\M}b]=\sum_{i\geq 0}\frac{\M^i}{i!}(a_{(i)}b).\]
Using $\M$-brackets, conditions (C1)-(C4) shall translate into the following axioms:
\begin{equation*}
		\begin{aligned}
			&[\partial a_\lambda b]=-\lambda[a_\lambda b], \   [a_\lambda \partial b]=(\partial+\lambda)[a_\lambda b],\ \ (conformal\   sequilinearity),\\
			&[a_\lambda b]=-[b_{-\lambda-\partial}a],\ \ (skew\ symmetry),\\
			&[a_\lambda [b_\mu c]]=[[a_\lambda b]_{\lambda+\mu}c]+[b_\mu[a_\lambda c]]\  \ (Jacobi\ identity),
		\end{aligned}
	\end{equation*}
		for all $a$, $b$, $c\in \mathcal{S}$.

	the modules, ideals, quotients, subalgebras and simplicity of a Lie conformal algebra can be defined in  canonical ways. Using conformal sequilinearity, we can define a Lie conformal algebra and its modules by giving the $\lambda$-brackets on its generators over $\mathbb{C}[\partial]$. We say a Lie conformal algebra $\mathcal{S}$ is \emph{finite} when it is finitely generated as  a $\C[\p]$-module and is \emph{finite type} when it is finitely generated as an algebra. 
	
	In addition, let $M$ be an $\mathcal{S}$ module. For any $U \subset \mathcal{S}$, $V\subset M$,  we set \[U_{(\lambda)}V:=span_{\C}\{u_{(n)}v| u\in U,v\in V,n\in \Z_+\}. \] In particular, if $M=\mathcal{S}$ is the adjoint module, we shall use the notation $[U_{(\lambda)}V]$. 
 A Lie conformal algebra $\mathcal{S}$ is called  \emph{perfect} if 
    $[\mathcal{S}_{(\lambda)} \mathcal{S}]=\mathcal{S}$.
 clearly, Simple Lie conformal algebra must be perfect.

	\begin{exa}
		The Virasoro Lie conformal algebra  $Vir=\mathbb{C}[\partial]L$ is a free  $\mathbb{C}[\partial]$-module of rank one, whose $\lambda$-brackets are determined by  $[L_\lambda L]=(\partial+2\lambda)L$. Hence $Vir^0$ is a one dimensional Lie algebra.   Furthermore, it is well known that $Vir$ is a simple Lie conformal algebra. Any free rank one $Vir$ module must be has the form $M_{a,b}=\C[\p]v$ such that
		\[L_\M v=(\p+a\M+b)v,\] for some $a,b\in \C$.
	\end{exa}
	
	\begin{exa}For a Lie algebra $\mathfrak{g}$, the current Lie conformal algebra $\text{Cur}\mathfrak{g}$ is a free $\mathbb{C}[\partial]$-module $\mathbb{C}[\partial] \otimes \mathfrak{g}$  equipped with  $\lambda$-brackets:
		\[ [x_\lambda y]=[x,y],\ \ \text{for}\ \  x,y \in \mathfrak{g}. \]
	It is easy to see that $(\text{Cur}\mathfrak{g})^0\cong \mathfrak{g}$ as Lie algebras.   
	In addition, for any $\mathfrak{g}$module $U$, we have a  $\text{Cur}\mathfrak{g}$-module $M_U:=\mathbb{C}[\partial] \otimes U$ such that
	    \[ x_\M u=x\cdot u.\]
	    for each $x\in \mathfrak{g}$ and $u\in U$.
	\end{exa}
	It is well known that any finite simple Lie conformal algebra must be isomorphic either to $Vir$ or to $Cur\mathfrak{g}$ for some finite dimensional simple Lie algebra.
	
	For a Lie conformal algebra $\mathcal{S}$, set
	\[Lie(\mathcal{S}):=\mathcal{S}[t,t^{-1}]/(\p+\p_t)\mathcal{S}[t,t^{-1}].\]
Let $a_m$ be the image of $at^n$ in  $Lie(\mathcal{S})$. We can define a Lie brackets on $Lie(\mathcal{S})$ as follows
\[[{a_m},b_n]=\sum_{j\geq 0}\binom{m}{j}(a_{(j)}b)_{m+n-j}.\]
$Lie(\mathcal{S})$ is called as the formal distribution Lie algebra of the Lie conformal
algebra $\mathcal{S}$, the following subalgebra 
\[\overline{Lie(\mathcal{S})} =\{a_n|n\in \Z_+\}\]
of $Lie(\mathcal{S})$ are called as the annihilation Lie algebra. Define the action of $\p$ on $\overline{Lie(\mathcal{S})}$ as follows as 
$\p(a_n)=na_{n-1}$, we can obtain the extended annihilation Lie algebra  $\C\p\ltimes \overline{Lie(\mathcal{S})}$.

One can check directly that $Lie(Vir)\cong \mathcal{W}$ and $Lie \mathcal(Vir)^{\_}\cong \mathcal{W}_1$, where $\mathcal{W}$ is the Witt algebra and $\mathcal{W}_1$ is the  centerless Virasoro Lie algebra.
Let $M$ be a Lie conformal algebra $\mathcal{S}$, then $M$ admits a $\C\p\ltimes \overline{Lie(\mathcal{S})}$-module structure as follows:
For any $a_n\in \overline{Lie(\mathcal{S})}$, $v\in M$. 
   \[a_n\cdot v=a_{n}v.\]
In addition, the $\partial$ of $\C\p\ltimes \overline{Lie(\mathcal{S})}$ action on $M$ is concide with the $\C[\p]$-module structure on $M$.
\begin{dfn} Let $\mathcal{S}$  be a Lie conformal algebra and $M$  be an $\mathcal{S}$ module. Then we say that $M$ is   {\emph{nilpotent}} if there exists positive integer $n$ such that for each $a_1,a_2,\cdots,a_n  \in \mathcal{S}$ and $m\in M$,\[{a_1}_{(\M_1)}{a_2}_{(\M_2)}\cdots {a_n}_{(\M_n)}m=\{0\}. \]
We say that  $M$ is {\emph{locally nilpotent}} if any  $m\in M$, there exists positive integer $n$ such that $ad^n(a_{(\M)})m=\{0\}$ for each $a\in S$.
\end{dfn}
Clealy a nilpotent module $M$ must be locally nilpotent.
   \begin{dfn}
    	We say a  Lie conformal algebra $\mathcal{G}$ is $\Z$-graded if  \begin{enumerate}[fullwidth,itemindent=0em,label=(\arabic*)]
	\item $\mathcal{G}=\bigoplus_{i\in \Z} \mathcal{G}_i$, where each $\mathcal{G}_i$ is a $\C[\partial]$-module.
	\item $[{\mathcal{G}_i}_{(\M)}{\mathcal{G}_j}]\subset \mathcal{G}_{i+j}$ for each $i,j\in \Z$.
\end{enumerate}
\end{dfn}
A graded-module of a graded-Lie conformal algebra can be defined in a similar way.
Let $\mathcal{G}=\bigoplus_{i\in \Z} \mathcal{G}_i$ be a $\Z$-graded Lie conformal algebra.
\begin{enumerate}[fullwidth,itemindent=0em,label=(\arabic*)]
\item If there exists some positive integer $N$ such that $rank(\mathcal{G}_i)<N$, then we say $\mathcal{G}$ is \emph{uniformly bounded}.
\item We say an element  $x\in \mathcal{G}$ is \emph{homogeneous} if $x\in \mathcal{G}_i$ for some $i\in \Z$. 
\item A \emph{graded ideal} of $\mathcal{G}$ is an ideal of $\mathcal{G}$,generated by homogeneous element. In addition, if $\mathcal{G}$ has no non-trivial graded ideal, then $\mathcal{G}$ is called \emph{simple graded}.
\end{enumerate}
     
In the last part of this section, Let us introduce some uniformly bounded Lie conformal algebras which will be used in our classification process. 
The following graded Lie conformal algebras are called of \emph{type I}:
\begin{enumerate}[fullwidth,itemindent=0em,label=(\arabic*)]
\item[$\bullet$]$Cur\mathfrak{g}$, where $g$ is a simple graded. 
\item[$\bullet$] $M(1)=\bigoplus_{i \geq -1}\mathbb{C}[\partial]L_i$, for each $i,j\geq -1$, 
\begin{align}\label{eq2-8}
	\begin{split} 
		[{L_i}_\lambda L_{j}]=\left\{ 
		\begin{array}{ll}
			(\partial+2\lambda)L_{2},&i=j=1,\\
			\frac{(j+1)(j-2)}{2}L_{j-1},& i=-1,j\geq{1},\\
			-jL_j,&i=0,\\
			L_{j+1},&i=1,j\geq 2,\\
			0,&i\geq2,j\geq2.
		\end{array}
		\right.
	\end{split}
\end{align}
The left unannouced $\lambda$-brakets can be obtained by skew symmetric property.

\item[$\bullet$] 
 $M(2)=\bigoplus_{i \in \mathbb{Z}}\mathbb{C}[\partial]L_i$, for each $i,j\in \Z$,\\
\begin{align}\label{eq2-9}
	\begin{split} 
		[{L_i}_\lambda L_{j}]=\left\{ 
		\begin{array}{ll}
			(\partial+2\lambda)L_{-2},& i=j=-1,\\
			L_{j-1},&i=-1,j\leq -2,\\
			\frac{(j+1)(j-2)}{2}L_{j-1},& i=-1,j\geq 1,\\
			-jL_j,&i=0,\\
			(\partial+2\lambda)L_{2},&i=j=1,\\	
			\frac{(j-1)(j+2)}{2}L_{j+1},& i=1,j\leq -2 ,\\
			L_{j+1},&i=1,j\geq 2,\\
			0,&\lvert i\rvert \geq2,\lvert j\rvert \geq2.
		\end{array}
		\right.
	\end{split}
\end{align}
The left unannouced $\lambda$-brakets of $M(1)$,$M(2)$ can be obtained by skew symmetric property.
\end{enumerate}
The following graded Lie conformal algebras are called of \emph{type II}:
\begin{enumerate}[fullwidth,itemindent=0em,label=(\arabic*)]
\item[$\bullet$]  $\mathcal{V}(s)=\bigoplus_{i \in \mathbb{Z}}\mathbb{C}[\partial]L_i, s\in \mathbb{C}:  \text{for}\ \  i,j\in \mathbb{Z},\ \ [{L_i}_\lambda L_j]=(\partial+2\lambda+s(i-j))L_{i+j}$.
\item[$\bullet$] $CL_1(s)=\bigoplus_{i \in \mathbb{Z}_{\geq -1}}\mathbb{C}[\partial]L_i: [{L_i}_\lambda L_j]=((i+1)\partial+(i+j+2)\lambda+s(i-j))L_{i+j}$.
\item[$\bullet$] $CL_2(b,s)=\bigoplus_{i \in \mathbb{Z}}\mathbb{C}[\partial]L_i:[{L_i}_\lambda L_j]=((i+b)\partial+(i+j+2b)\lambda+s(i-j))L_{i+j}$.
When $2b\in \Z$, $CL_2(b,s)$ is not simple. It has the unique proper graded ideal:  $SCL_2(b,s)=\bigoplus_{i\neq -2b}\mathbb{C}[\partial]L_i \oplus \mathbb{C}[\partial](\partial+2s)L_{-2b}$. 
\item[$\bullet$] $CL_3(s)=\bigoplus_{i \in \Z}\C[\partial]L_i:$  
 \begin{align*}
\begin{split} 
[{L_i}_\lambda L_{j}]=\left\{ 
\begin{array}{ll}
   i(\p+s)L_0,& i+j=0,\\
   -j(-\lambda+2s)L_j,&i=0,\\
   i(\p+\M+2s)L_i,&j=0,\\
   (i\p+(i+j)\M+s(i-j))L_{i+j},&else.
\end{array}
\right.
\end{split}
\end{align*}
\item[$\bullet$] $ECL(s)=\bigoplus_{i \in \mathbb{Z}}\mathbb{C}[\partial]L_i$:
	\begin{align*}
	\begin{split} 
		[{L_i}_\lambda L_{j}]=\left\{ 
		\begin{array}{ll}
			i(\partial+s)(\partial+2s)L_0,& i+j=0,\\
			-jL_j,&i=0,\\
			iL_i,&j=0,\\
			(i\partial+(i+j)\lambda+s(i-j))L_{i+j},&\text{else}.
		\end{array}
		\right.
	\end{split}
\end{align*}
\end{enumerate}
  
It is not hard to see that $ECL(s)$ has three proper ideals: \[ \bigoplus_{i\neq 0}\mathbb{C}[\partial]L_i \oplus \mathbb{C}[\partial](\partial+s) L_0, \ \ \bigoplus_{i\neq 0}\mathbb{C}[\partial]L_i \oplus \mathbb{C}[\partial](\partial+2s) L_0,\]
\[ \bigoplus_{i\neq 0}\mathbb{C}[\partial]L_i \oplus \mathbb{C}[\partial](\partial+s)(\p+2s) L_0.\]  one can check straightly that these three ideals are isomorphic to $CL_2(0,s)$,$CL_3(s)$ and $SCL_2(0,s)$, respectively.

   It is clear that the Lie conformal algebra of type I: \[Cursl(2,\C),\  M(1),\  M(2) \]
   are non-isomorphic to each other. As for the following Lie conformal algebras of type II:
   	\[ CL_2(0,s),\ CL_3(s),\ ECL(s),\ SCL_2(0,s),\]
   by	considering the action of $L_0$, the four distinct types of Lie conformal algebras are non-isomorphic to one another. Furthermore, 
   \begin{pro}\begin{enumerate}
   		\item $CL_2(0,s)\cong CL_2(0,s_1)$ if and only if $s=s_1$.
   		\item $CL_3(s)\cong CL_3(s_1)$ if and only if $s=s_1$.
   		\item $ECL(s)\cong ECL(s_1)$if and only if $s=s_1$.
   		\item $SCL_2(0,s)\cong SCL_2(0,s_1)$ if and only if $s=s_1$.
   	\end{enumerate}
   \end{pro}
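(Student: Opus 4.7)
For each of the four parts the ``if'' direction is immediate, and for the ``only if'' direction I fix one of the four families $\mathcal{L}(\cdot) \in \{CL_2(0,\cdot),\, CL_3,\, ECL,\, SCL_2(0,\cdot)\}$ and let $\phi:\mathcal{L}(s) \to \mathcal{L}(s_1)$ be an isomorphism of Lie conformal algebras.

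First I would dispose of the degenerate case $s=0$ by passing to basic Lie algebras. A direct rescaling of the $0$-products shows that $\mathcal{L}(s')^0 \cong \mathcal{W}$ for every $s' \in \mathbb{C}^*$, whereas $\mathcal{L}(0)^0$ is abelian. Since $\phi$ descends to a Lie algebra isomorphism on basic Lie algebras, $s=0$ forces $s_1=0$, and I may henceforth assume $s, s_1 \in \mathbb{C}^*$.

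Next I would show that $\phi$ preserves the $\mathbb{Z}$-grading up to the involution $i \mapsto -i$. The distinguished generator $\bar L_0$ of the degree-zero component of $\mathcal{L}(s)^0$ (or $\overline{(\partial+2s)L_0}=2s\bar L_0$ in the $SCL_2(0,s)$ case) is, up to scalar, the unique ad-semisimple element whose eigenspaces are all one-dimensional and indexed in an affine-linear way by $\mathbb{Z}$. Combined with the rank-one uniform bound on each graded piece and the $\mathbb{C}[\partial]$-linearity of $\phi$, this spectral rigidity lifts to a group automorphism $\sigma$ of $\mathbb{Z}$ with $\phi(\mathcal{L}_i)=\mathcal{L}'_{\sigma(i)}$, so $\sigma(i)=\pm i$. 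Given the grading, I would then write $\phi(L_i) = c_i L'_{\sigma(i)}$ with $c_i \in \mathbb{C}^*$ (the only units in $\mathbb{C}[\partial]$), except in the $SCL_2(0,s)$ case where the identification $\phi((\partial+2s)L_0) = c_0(\partial+2s_1)L'_0$ of the twisted generators of the degree-zero component already pins down $s=s_1$.

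The remaining input is the comparison of $\lambda$-brackets. For $CL_2(0,s)$ and $CL_3(s)$, substituting the explicit form of $\phi$ into $[L_0{}_\lambda L_j] = p_j^{(s)}(\lambda)L_j$ and matching $\lambda$-coefficients forces $c_0=1$ and then $s_1=s$ from the $\lambda$-independent term. For $ECL(s)$ the $L_0$-action carries no $s$-information, so I would instead use the bracket $[L_i{}_\lambda L_{-i}] = i(\partial+s)(\partial+2s)L_0$ whose pair of roots $\{-s,-2s\}$ is intrinsically determined once the image of $L_0$ is fixed up to scalar. The hard step is the spectral rigidity argument in the grading-preservation paragraph: one must rule out that any non-homogeneous element of $\mathcal{L}(s)$ could serve as $\phi^{-1}$ of the distinguished degree-zero generator of $\mathcal{L}(s_1)$. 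The uniformly bounded rank-one structure and the rigidity of the principal $\mathbb{Z}$-grading of the Witt-like basic Lie algebra make this tractable, but the twisted generator in $SCL_2(0,s)$ and the quadratic $\partial$-polynomial in $ECL(s)$ each require separate bookkeeping.
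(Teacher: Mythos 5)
The paper itself offers no argument for this proposition (it is asserted after the remark that the four families are distinguished by the $L_0$-action), and if ``isomorphic'' is read as graded isomorphism the whole content is exactly the coefficient comparison you perform at the end: $\phi(L_i)=c_iL'_{\pm i}$ and matching structure polynomials such as $[{L_0}_\lambda L_j]$ or $[{L_i}_\lambda L_{-i}]=i(\partial+s)(\partial+2s)L_0$ forces $s=s_1$. That part of your proposal is sound (modulo the harmless wrinkle that in the grading-reversing case one gets $c_0=-1$ rather than $c_0=1$). However, as written your argument has genuine gaps. First, the step that carries all the weight for arbitrary (non-graded) isomorphisms --- the ``spectral rigidity'' claim that $\phi$ must send $\mathcal{L}_i$ to $\mathcal{L}'_{\sigma(i)}$ --- is only asserted; you yourself flag it as ``the hard step'' and defer it, so no proof of it is actually given, and lifting a statement about ad-semisimple elements of the basic Lie algebra $\mathcal{L}^0=\mathcal{L}/\partial\mathcal{L}$ back to the conformal algebra is not automatic (elements of $\mathcal{L}$ are only controlled modulo $\partial\mathcal{L}$).

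Second, the $SCL_2(0,s)$ case is wrong as stated: the degree-zero component of $SCL_2(0,s)$ is just a free rank-one $\mathbb{C}[\partial]$-module with generator $L'_0=(\partial+2s)L_0$, so the identification $\phi(L'_0)=c_0(\partial+2s_1)L''_0$ is merely the statement that a generator goes to a unit multiple of a generator and pins down nothing about $s$; to recover $s$ you must use a bracket, e.g. $[{L'_0}_\lambda L_j]=-j(\lambda-s)(\lambda-2s)L_j$, and compare root sets exactly as you do for $ECL(s)$. Third, your disposal of the degenerate case is inaccurate: $\mathcal{L}(0)^0$ is \emph{not} abelian for two of the four families --- for $ECL(0)$ one has $[\bar L_0,\bar L_j]=-j\bar L_j$ (a metabelian algebra), and for $SCL_2(0,0)$ one has $[\bar L_i,\bar L_{-i}]=i\bar L'_0$ (a Heisenberg algebra). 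The intended dichotomy survives, since these algebras are solvable and hence not isomorphic to the Witt algebra $\mathcal{W}$, which is what $\mathcal{L}(s')^0$ is for $s'\neq 0$, but the justification needs to be corrected accordingly.
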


\section{the existence of $\mathcal{L}_0$ for  uniformly bounded simple Lie conformal algebras}
In this section, we shall prove that a uniformly bounded simple Lie conformal algebra  $\LL=\bigoplus_{i\in \mathbb{Z}}\LL_i$ must be finitely generated with  non-trivial  $\mathcal{L}_0$.  

\begin{dfn}Suppose that $\LL=\bigoplus_{i\in \mathbb{Z}}\LL_i$ is a graded Lie conformal algebra and $M=\bigoplus_{i\in \mathbb{Z}}M_i$ is a graded $\LL$-module. Then we say $M$ is {\bf weightless} if each $M_i$ is nilpotent as a $\LL_{0}$-module.  Then we say $\LL$ is {\bf rootless} if each $\LL_i$ is nilpotent as a $\LL_{0}$-module in  terms of adjoint action.
\end{dfn}

\begin{pro}{\label{p3.2}}Let $\LL=\bigoplus_{i\in \mathbb{Z}}\LL_i$ be a perfect rootless Lie conformal algebra. Then $\LL$ is not finite type. \end{pro}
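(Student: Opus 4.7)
I would argue by contradiction: suppose $\mathcal{L}$ is finite type, generated as a conformal algebra by homogeneous elements $a_1,\ldots,a_n$ with $a_j\in\mathcal{L}_{d_j}$. By rootlessness each $\mathcal{L}_{d_j}$ is nilpotent as an $\mathcal{L}_0$-module with some index $N_j$; set $N=\max_j N_j$.

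The main device I would introduce is the $\mathcal{L}_0$-nilpotency filtration
\[F_k\mathcal{L}=\{v\in\mathcal{L}:[x_1{}_{\mu_1}[x_2{}_{\mu_2}[\cdots[x_k{}_{\mu_k}v]\cdots]]]=0\ \text{for all}\ x_1,\ldots,x_k\in\mathcal{L}_0\},\]
so that $F_0\mathcal{L}=0\subseteq F_1\mathcal{L}\subseteq\cdots$ and $\mathcal{L}=\bigcup_k F_k\mathcal{L}$ by rootlessness. Expanding the Jacobi identity $[x_\mu[b_\nu v]]=[[x_\mu b]_{\mu+\nu}v]+[b_\nu[x_\mu v]]$ and iterating $m$ times produces $2^m$ terms in which each $x_i\in\mathcal{L}_0$ has been applied either to $b$ or to $v$; when $m\geq a+b-1$, every resulting term has either $\geq a$ $\mathcal{L}_0$-factors acting on a member of $F_a\mathcal{L}$ or $\geq b$ acting on a member of $F_b\mathcal{L}$, and both vanish. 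This yields the multiplicativity $[F_a{}_{(\lambda)}F_b\mathcal{L}]\subseteq F_{a+b-1}\mathcal{L}$, and the same Jacobi calculation shows that $F_1\mathcal{L}=\bigoplus_i\{v\in\mathcal{L}_i:[\mathcal{L}_0{}_{(\lambda)}v]=0\}$ is a graded Lie conformal subalgebra on which $\mathcal{L}_0$ acts trivially.

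Next I would separate cases. In the \emph{one-sided case} (all $d_j\geq 0$, or symmetrically all $d_j\leq 0$), iterated brackets preserve nonnegativity of degree, so $\mathcal{L}_i=0$ for $i<0$. Perfectness then collapses to $\mathcal{L}_0=\sum_i[\mathcal{L}_i{}_{(\lambda)}\mathcal{L}_{-i}]=[\mathcal{L}_0{}_{(\lambda)}\mathcal{L}_0]$; iterated against the nilpotence of $\mathcal{L}_0$ this forces $\mathcal{L}_0=0$, after which an induction on the minimal positive graded component forces $\mathcal{L}=0$, a contradiction. In the \emph{two-sided case} (generators of both signs present) I would combine the multiplicativity of $F_\bullet$ with iterated perfectness to bound the nilpotency index of every element of $\mathcal{L}$ by an explicit function of $n$ and $N$, forcing $\mathcal{L}=F_{k_0}\mathcal{L}$ for some finite $k_0$; the associated graded $\bigoplus_k F_k\mathcal{L}/F_{k-1}\mathcal{L}$ is then a perfect graded Lie conformal algebra on which $\mathcal{L}_0$ acts trivially, reducing to the $\mathcal{L}_0$-trivial version of the one-sided argument. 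The principal obstacle I expect is precisely this reduction: showing that finite generation really does bound the nilpotency index uniformly and that the descent to the associated graded preserves enough structure (in particular, enough of perfectness) to rerun the one-sided grading contradiction.
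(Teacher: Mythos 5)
Your filtration device is sound as far as it goes: the multiplicativity $[F_a{}_{(\lambda)}F_b\LL]\subseteq F_{a+b-1}\LL$ follows from the Jacobi identity exactly as you say, and your one-sided case (all generators of nonnegative degree) is a correct, if degenerate, argument: perfectness in degree zero plus nilpotency of the adjoint $\LL_0$-action kills $\LL_0$, and then the minimal positive degree collapses. The problem is that the two-sided case is the entire substance of the proposition, and there your text is a plan, not a proof, and the plan has two concrete gaps. First, rootlessness only gives, for each fixed $i$, some nilpotency index for $\LL_i$; finite generation expresses elements of $\LL_i$ as sums of iterated brackets whose length necessarily grows with $|i|$, so your multiplicativity bound on the filtration level grows linearly in $|i|$ and gives no uniform $k_0$ with $\LL=F_{k_0}\LL$. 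Nothing in the hypotheses forces such a uniform bound, and you do not prove it. Second, even granting $\LL=F_{k_0}\LL$, perfectness does not in general descend to the associated graded of $F_\bullet$ (an element of $F_k$ is a sum of brackets, but there is no control placing those brackets in the filtration degrees needed for surjectivity of the induced bracket on $\mathrm{gr}$), and more importantly the resulting object is still $\Z$-graded on both sides, so the "minimal positive component" collapse you invoke is unavailable: $\LL_m$ can be hit by brackets $[{\LL_{-r}}_{(\lambda)}\LL_{m+r}]$ with $r>0$, and making the $\LL_0$-action trivial does nothing to exclude these. You flag this obstacle yourself, which is honest, but it means the key difficulty is untouched.

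For comparison, the paper attacks exactly this point differently: assuming $\LL$ is generated by $\LL(p,q)$ with $p<0<q$, it first shows (Jacobi identity plus induction) that $\LL=\LL^-+\LL_0+\LL^+$, where $\LL^\pm$ are the subalgebras generated by the positive and negative parts of the generating set, so that $\LL^+=\bigoplus_{i>0}\LL_i$ is itself finitely generated. It then considers the finite graded $\LL_0$-module $V=\LL^+/[{\LL^+}_{(\lambda)}\LL^+]$; since $\LL_0$ acts nilpotently, the top component $V_m$ satisfies ${\LL_0}_{(\lambda)}V_m\neq V_m$, producing $y\in\LL_m$ outside $[{(\LL^++\LL_0)}_{(\lambda)}\LL^+]$. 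Perfectness forces $y$ to lie in some $[{\LL_{>n}}_{(\lambda)}\LL_{<m-n}]+[{(\LL^++\LL_0)}_{(\lambda)}\LL^+]$, and a minimality argument on $n$ (pushing the negative degrees upward via the Jacobi identity) yields the contradiction. Some mechanism of this kind, controlling how negative degrees can contribute to a top positive degree, is precisely what your proposal is missing.
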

\begin{proof}Suppose  there exists  $p<0<q \in \Z$ such that $\LL$ is generated by $\LL(p,q)$, where $\LL(p,q)=\bigoplus_{p\leq i \leq q} \LL_i$.
Let  \[ \LL^+(p,q) =\bigoplus_{0<i \leq q} \LL_i\ \   \text{and}\ \  \LL^-(p,q) =\bigoplus_{p\leq i<0} \LL_i.\]
Let $\LL^+$(respectively $\LL^-$) be the subalgebra generated by $\LL^+(p,q)$(respectively $\LL^-(p,q)$).
Let $L^{+,N}$(respectively, $\LL^{-,N}$) be the $\C[\partial]$-submodule of $\LL^+$(respectively, $\LL^{-}$) with degree less than $N$(greater than $-N$). For each $N\geq 0$, one can use the Jacobi-identity and  induction on  $N$  to prove that
    \[[\LL^+(p,q)_{(\lambda)} \LL^{-,N}] \subset \LL_0+\LL^{-,N}+\LL^+(p,q),\]
        \[[\LL^-(p,q)_{(\lambda)} \LL^{+,N}] \subset \LL_0+\LL^{+,N}+\LL^-(p,q).\]
     It implies that $\LL^++\LL_0+\LL^-$ is a subalgebra of $\LL$ and generated by $\LL(p,q)$. Hence $\LL=\LL^++\LL_0+\LL^-$. It implies that $\LL^+=\bigoplus_{i>0} \LL_i$. Since $\LL^+$ is generated by  $\LL^+(p,q)$, $V=\LL^+/[\LL^+_{(\lambda)} \LL^+]$ is finite and has a nature $\mathbb{Z}$-graded structure induced by $\LL^+$.  In addition, we can regard $V$ as a finite graded $\LL_0$-module. Let $V_m$ be the highest non-zero components of $V$.  Since $\LL$ is nilpotent, $V_m\neq {{\LL_0}}_{(\lambda)}V_m$. Hence there exists some $y\in \LL_m$ but not belong to $[{(\LL^++\LL_0)}_{(\lambda)}\LL^+]$.  Let  $\LL_{>n}$ (respectively, $\LL_{<n}$) is the $\C[\partial]$-submodule of $\LL$ with degree greater than $n$(less than n).  
     Set \[S=\{n\in \N| y\in [{\LL_{>n}}_{(\lambda)} \LL_{<m-n}]+[{(\LL^++\LL_0)}_{(\lambda)}\LL^+]\}.\] 
        Since $\LL$ is perfect, $S$ is not empty.
  Let $n_0=min S$. Since $\LL_{n_0}\subset [{\LL^+}_{(\lambda)} \LL^+]$, there exists $0<n_1<n_0$ such that $\LL_{n_0}\subset [{\LL_{n_1}}_{(\lambda)} \LL_{n_0-n_1}]$. By Jocobi-identity,
  we can show that 
  \[  y\in [{\LL_{>n_1}}_{(\lambda)} \LL_{<m-n_1}]+[{(\LL^++\LL_0)}_{(\lambda)}\LL^+],\]
  which contradicts to the minimality of $n_0$.
\end{proof}
\begin{lem}{\label{l3.3}}Suppose that $\LL=\bigoplus_{i\in \mathbb{Z}}\LL_i$ is a rootless finite graded Lie conformal algebra and $M$ is a finite graded weightless $\LL$-module.Then M is nilpotent as a $\LL$-module.\end{lem}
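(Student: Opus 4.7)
The plan is to establish a conformal analog of Engel's theorem via a Noetherian chain argument combined with an Engel-style construction of a common null vector.

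Set $M^{[0]}=0$ and, for each $s\geq 1$,
\[M^{[s]}=\{\,m\in M \mid {a_1}_{(\M_1)}\cdots {a_s}_{(\M_s)}m=0\text{ for all } a_i\in \LL,\ \M_i\in \C\,\}.\]
Conformal sesquilinearity and the Jacobi identity show that each $M^{[s]}$ is a graded $\C[\p]$-submodule of $M$; since $\LL_{(\M)}M^{[s]}\subseteq M^{[s-1]}$, it is in fact a graded $\LL$-submodule. Because $M$ is finitely generated over the PID $\C[\p]$, it is Noetherian, so the ascending chain $M^{[0]}\subseteq M^{[1]}\subseteq \cdots$ stabilizes at some $s_0$, and the lemma reduces to proving $M^{[s_0]}=M$.

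Suppose for contradiction that $\bar M:=M/M^{[s_0]}\neq 0$. Then $\bar M$ inherits the structure of a nonzero graded weightless $\LL$-module, and the stabilization translates into the statement that no nonzero element of $\bar M$ is annihilated by all of $\LL$. It therefore suffices to prove the following Engel-type claim: \emph{every nonzero weightless graded module over a rootless finite graded Lie conformal algebra admits a nonzero common $\LL$-null vector}.

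To exhibit such a vector, let $p'$ be the smallest integer with $\bar M_{p'}\neq 0$; by degree considerations $\LL_{<0}\cdot \bar M_{p'}=0$. Since $\bar M_{p'}$ is nonzero and nilpotent as $\LL_0$-module, taking the least $N$ such that every length-$N$ product of $\LL_0$-actions annihilates $\bar M_{p'}$ and selecting a nonzero element in the image of the length-$(N-1)$ products produces $m\in \bar M_{p'}$ with $\LL_{\leq 0}\cdot m=0$. Iteratively applying elements of $\LL_{>0}$ to $m$ must terminate in at most $q'-p'$ steps by the grading support bound on $\bar M$, producing a candidate top vector annihilated by $\LL_{>0}$; the delicate task is to arrange that this top vector still lies in the $\LL_{\leq 0}$-annihilator.

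This last step is the main obstacle: commuting an element of $\LL_{<0}$ past a product of $\LL_{>0}$-actions produces, via the Jacobi identity, bracket terms lying in arbitrary graded pieces of $\LL$. The plan is to handle it by an auxiliary induction on the width $q-p$ of $\LL$. The base case $\LL=\LL_0$ is immediate because $M$ has only finitely many nonzero graded pieces, each carrying a uniform $\LL_0$-nilpotency bound. In the inductive step, the rootless hypothesis---ensuring that every adjoint orbit of $\LL_0$ on each $\LL_i$ terminates---combined with the bounded support of $\LL$ should force the problematic bracket terms to vanish after a bounded number of commutations, reducing the situation to a Lie conformal algebra of strictly smaller width acting on an appropriate submodule of $\bar M$.
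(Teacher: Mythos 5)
Your reduction scaffolding is fine: the chain $M^{[0]}\subseteq M^{[1]}\subseteq\cdots$ of graded $\LL$-submodules, its stabilization (using that $M$ is finitely generated over the Noetherian ring $\C[\p]$), and the observation that the lemma follows once every nonzero finite graded weightless module over a rootless finite graded $\LL$ has a nonzero common $\LL$-null vector, are all correct. But that Engel-type claim is essentially the whole content of the lemma, and you do not prove it. You construct $m$ in the lowest graded piece with $\LL_{\leq 0}{}_{(\M)}m=0$, and you yourself flag that the "delicate task" is to keep $\LL_{\leq 0}$-annihilation while pushing $m$ up with $\LL_{>0}$; the proposed fix --- an induction on the width of $\LL$ in which the rootless hypothesis "should force the problematic bracket terms to vanish after a bounded number of commutations, reducing to a Lie conformal algebra of strictly smaller width acting on an appropriate submodule" --- is not an argument. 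It is never specified what the smaller-width algebra is, what submodule it acts on, or why the Jacobi-identity cross terms (which involve $[\LL_{<0}{}_{(\M)}\LL_{>0}]\subseteq\LL_0$ and then further mixed brackets) can be killed; controlling exactly these terms is where the rootless hypothesis has to do real work, and nothing in the sketch does it. So there is a genuine gap at the decisive step.

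For comparison, the paper does not reprove a conformal Engel theorem by hand: it observes that rootlessness plus the finite grading makes every element of $\LL$ ad-nilpotent, invokes the conformal Engel theorem \cite[Corollary 8.5]{BDK} to conclude that $\LL$ is nilpotent (hence solvable), and then applies the conformal Lie theorem to get a filtration $0=M_0\subset M_1\subset\cdots\subset M_N=M$ with free rank-one quotients; since $\LL_{>0}$, $\LL_{<0}$ act nilpotently for degree reasons and $\LL_0$ acts nilpotently by weightlessness, each rank-one quotient is a trivial $\LL$-module, so $M$ is nilpotent. If you want to salvage your route, the cleanest repair is to replace your unfinished width induction by exactly this citation (or an actual proof of the conformal Engel statement), after which your $M^{[s]}$-chain argument becomes redundant.
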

\begin{proof}Since any element of $\LL$ is ad-nilpotent, by \cite[Corollary 8.5]{BDK}, $\LL$ is nilpotent. Hence $\LL$ is solvable. Therefore $M$ has filtration
\[ 0=M_0\subset M_1\subset \cdots \subset M_N=M.\]
such that $M_{i+1}/M_i$ is free rank one for each $i\geq 0$. By the graded structure, $\LL_{>0}$ and $\LL_{<0}$ acts nilpotently on $M$. Since  $M$ is weightless, $\LL_0$ also acts nilpotently on $M$.  Hence  $\LL$ acts nilpotently on $M$. Hence  $M_{i+1}/M_i$ is a trivial $\LL$-module for each $i\geq 0$. Hence $M$ is a nilpotent $\LL$-module. 
\end{proof}
    Let $I(M)$ be  the  $\LL$-submodule of $M$ generated by \[\{v\in M| v\in U(Lie(\LL))Lie(\LL)\cdot v\  \text{and}\  v\  \text{is homogeneuos}. \}\] Clearly, if $M$ is locally nilpotent, then $I(M)=0$.
Define $e(M)=\underline{limits}_{n \mapsto \infty} \frac{1}{n}rank M^n$, where $M^n=\bigoplus_{i\leq |n|}M_i$.
\begin{lem}{\label{l3.4}}Suppose that $\LL=\bigoplus \LL_i$ is a graded Lie conformal algebra of finite type and $M$ is a finitely generated infinite rank  graded $\LL$-module.Then $e(M)\geq 1/d$ for some positive integer $d$.\end{lem}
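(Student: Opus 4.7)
The plan is to build an ascending filtration $V_0\subset V_1\subset\cdots$ of $M$ by finite-rank $\C[\p]$-submodules whose grading supports grow by a bounded additive amount at each step, and then show that the chain must ascend strictly; combining the two facts yields the desired linear lower bound on $\mathrm{rank}\,M^n$.

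I would begin by choosing homogeneous generators $x_1,\ldots,x_r$ of $\LL$ as an algebra and homogeneous generators $v_1,\ldots,v_k$ of $M$ as an $\LL$-module, and setting $D=\max_i|\deg x_i|$, $D'=\max_j|\deg v_j|$. Put $V_0=\sum_{j}\C[\p]v_j$ and, recursively,
\[
V_{n+1}=V_n+\sum_{i=1}^{r}\sum_{m\geq 0}\C[\p]\,(x_i)_{(m)}V_n.
\]
Axiom (C1) gives $(x_i)_{(m)}v=0$ for $m\gg 0$ at each fixed $v$, so finiteness of $\C[\p]$-rank propagates through the recursion; since $(x_i)_{(m)}$ shifts grading by $\deg x_i$, an easy induction produces $V_n\subset M^{D'+nD}$.

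The main obstacle I foresee is proving that $V_n\subsetneq V_{n+1}$ strictly for every $n$. Suppose toward contradiction that $V_N=V_{N+1}$, so that $[(x_i)_\M V_N]\subset V_N[\M]$ for every $i$. Define
\[
S=\{y\in\LL\mid [y_\M V_N]\subset V_N[\M]\},
\]
which is manifestly a $\C[\p]$-submodule of $\LL$. I would show $S$ is closed under all $(k)$-products: for $a,b\in S$ and $v\in V_N$, Jacobi gives $[[a_\M b]_{\M+\mu}v]=[a_\M[b_\mu v]]-[b_\mu[a_\M v]]$, and both terms on the right lie in $V_N[\M,\mu]$. Expanding the left side as $\sum_{k,j}\frac{\M^k(\M+\mu)^j}{k!\,j!}(a_{(k)}b)_{(j)}v$ and using that $\{\M^k(\M+\mu)^j\}_{k,j\geq 0}$ is a $\C$-basis of $\C[\M,\mu]$, coefficient comparison forces every $(a_{(k)}b)_{(j)}v\in V_N$, i.e.\ $a_{(k)}b\in S$. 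Hence $S$ is a subalgebra of $\LL$ containing every generator $x_i$, so $S=\LL$. Then $V_N$ would be an $\LL$-submodule of $M$ containing every $v_j$, forcing $V_N=M$, which contradicts the infinite rank of $M$ against the finite rank of $V_N$.

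Strictness together with the degree bound gives $\mathrm{rank}\,M^{D'+nD}\geq\mathrm{rank}\,V_n\geq n+1$, so $e(M)\geq 1/D$ whenever $D\geq 1$. In the degenerate case $D=0$, all generators of $\LL$ lie in $\LL_0$ and the strictly ascending finite-rank chain $\{V_n\}$ is confined to the fixed $M^{D'}$; this forces some $M_i$ with $|i|\leq D'$ to have infinite $\C[\p]$-rank, whence $e(M)=\infty$. Either way, $d=\max(D,1)$ realises the conclusion, and the only delicate technical point is the coefficient-comparison step that upgrades closure under the $\lambda$-bracket by a pair of generators into closure under each individual $(k)$-product.
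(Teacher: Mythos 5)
Your construction of the filtration $V_n$ and the argument that it ascends strictly are fine: the set $S=\{y\in\LL\mid [y_\M V_N]\subset V_N[\M]\}$ is indeed a $\C[\p]$-submodule (by conformal sesquilinearity) and is closed under all $(k)$-products by the Jacobi identity plus coefficient comparison, so $V_N=V_{N+1}$ would make $V_N$ an $\LL$-submodule containing the generators of $M$, contradicting infinite rank. The gap is in the very last step, where you pass from strictness to the inequality $\mathrm{rank}\,V_n\geq n+1$. Over $\C[\p]$ a strict inclusion $V_n\subsetneq V_{n+1}$ does \emph{not} force the rank to grow: the quotient $V_{n+1}/V_n$ may be a torsion $\C[\p]$-module (for instance the action could produce an element $w\notin V_n$ with $\p w\in V_n$, or land in $Tor(M)$), in which case $\mathrm{rank}\,V_{n+1}=\mathrm{rank}\,V_n$. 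Since $e(M)$ is defined through ranks, an infinite strictly ascending chain by itself proves nothing quantitative; this is exactly the point the paper's proof takes care of, by choosing at each stage an element $x_{(\M)}m$ lying outside $M^{p}\cup Tor(M)$, so that $\mathrm{rank}\,M^{p+d}\geq \mathrm{rank}\,M^{p}+1$, and then iterating this rank increment every $d$ degrees to get the linear lower bound $e(M)\geq 1/d$.

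The same defect invalidates your degenerate case $D=0$: a strictly ascending chain of $\C[\p]$-submodules confined to $M^{D'}$ does not force some $M_i$ to have infinite rank, since a finite-rank $\C[\p]$-module that is not finitely generated (or has large torsion) admits infinite strictly increasing chains. To repair your argument you would need to measure progress by rank rather than by inclusion — e.g.\ replace $V_n$ by its saturation in $M$ (the set of $w$ with $f(\p)w\in V_n$ for some nonzero $f$), check that saturations are again stable under the generator actions when no rank growth occurs, and conclude that otherwise the rank of $M$ would be finite. Once phrased this way you are essentially reproducing the paper's counting argument, so as written the proposal is incomplete at its decisive quantitative step.
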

\begin{proof}Suppose that $\LL$ is generated by $\bigoplus_{i\leq |d|}\LL_i$. Since $M$ is finitely generated, for large enough $p$, $M^p$ generates  $M$. Since $M$ is infinite rank, there exists some $x\in \bigoplus_{i\leq |d|}\LL_i$ and $m\in M_p$ such that $x_{(\M)}m\not\in M^p \cup Tor(M)$. Otherwise, $M^{p+d}$ will be a $\LL$-submodule of $M$, which is impossible by the choice of $M^p$. Hence $rank M^{p+d}\geq rank M^{p}+1$. Therefore, for any $n>p+2d$,
then $n=p+sd+k$ for some integer $s>1$ and $0\leq k<s$. Hence
                                 \[      \frac{1}{n}rank M^n=\frac{1}{p+sd+k}rank M^{p+sd+k}\geq \frac{s}{p+sd+k}\geq \frac{s}{p+(s+1)d}.\]
It implies that $e(M)\geq 1/d$.
\end{proof}

\begin{pro}{\label{p3.5}}Supoose that $\LL=\bigoplus \LL_i$ is a graded Lie conformal algebra of finite type and $M$ is a  graded $\LL$-module. Then there exists some finitely generated $\LL$-submodule $N$ of $M$ such that $M/N$ is locally nilpotent $\LL$-module. \end{pro}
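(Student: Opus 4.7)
My plan is to take $N := I(M)$, the submodule defined just before the proposition, and to verify two complementary properties separately: that $I(M)$ is finitely generated, and that $M/I(M)$ is locally nilpotent. This candidate is suggested by the observation, recorded in the text, that $I(M)=0$ whenever $M$ is locally nilpotent.

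For finite generation, I argue by contradiction. If $I(M)$ were not finitely generated, I would build inductively a sequence of homogeneous recurrent elements $v_1,v_2,\ldots\in I(M)$ such that $v_{k+1}$ lies outside the finitely generated graded submodule $N_k:=\LL\cdot\{v_1,\ldots,v_k\}$. The recurrence property $v_j\in U(Lie(\LL))Lie(\LL)\cdot v_j$ forces the cyclic submodule $\LL\cdot v_j$, and hence each $N_k$ for $k\ge 1$, to be of infinite rank; applying Lemma \ref{l3.4} to each $N_k$ then yields $e(N_k)\ge 1/d$ for a fixed positive integer $d$ depending only on $\LL$. The inductive construction combined with these cumulative growth estimates should contradict either the available rank in $M$ or the compatibility of the gradings across the $N_k$.

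For local nilpotence of $M/I(M)$, I would first show that $M/I(M)$ has no homogeneous recurrent class, and then upgrade this to the uniform local nilpotence condition. Given a homogeneous $\bar v\in M/I(M)$ for which some $a\in\LL$ satisfies $a^n\bar v\ne 0$ for all $n$, the plan is to combine the ascending tower $\{a^n\bar v\}$ with a degree-opposite generator of $\LL$ drawn from a finite generating set, so as to close a cycle back to $\bar v$ and witness $\bar v\in U(Lie(\LL))Lie(\LL)\cdot\bar v$, a contradiction. When $a\in\LL_0$ and iteration preserves the graded degree, a separate argument exploiting the finite type condition together with the bounded-degree structure of $M$ is required.

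The main obstacle will be this last step: bridging from the single-element recurrence condition defining $I(M)$ to the uniform quantifier ``$\exists\,n\ \forall a\in\LL$'' appearing in the definition of local nilpotence. The finite type hypothesis on $\LL$ is indispensable here, since it reduces the universal quantifier to finitely many algebra generators and permits a PBW-style uniformization to extract a single exponent $n$ depending only on $v$. Additional care will be required in the degree-zero case, where the usual grade-shifting arguments break down and one must instead appeal to the interplay between the finite generation of $\LL$ and the graded decomposition of $M$.
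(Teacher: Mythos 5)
Your central decision---taking $N:=I(M)$ and proving that $M/I(M)$ is locally nilpotent---does not work, because the only implication available from the definition of $I(M)$ is the one the paper records (``locally nilpotent $\Rightarrow I(M)=0$''); the converse-type statement you need is false. Concretely, let $\LL=Cur(\C x)$ with $\C x$ the one-dimensional abelian Lie algebra, graded by $\LL=\LL_0$ (this is of finite type), and let $M=M_0=\bigoplus_{i\geq 0}\C[\p]m_i$ with $x_\M m_i=m_{i+1}$. One checks this is a graded $\LL$-module, and every application of an element of $Lie(\LL)$ strictly raises the minimal index $i$ occurring in an element, so $v\notin U(Lie(\LL))Lie(\LL)\cdot v$ for every homogeneous $v$ and hence $I(M)=0$; yet iterating $x_{(\M)}$ on $m_0$ gives $m_n\neq 0$ for all $n$, so $M=M/I(M)$ is not locally nilpotent. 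This is exactly the degree-zero situation you flag as needing ``a separate argument'': no degree-opposite generator exists to close a cycle, and no argument can supply one, because the claim about $M/I(M)$ is simply untrue. (The proposition itself is not contradicted here, since $M$ is cyclic and one may take $N=M$---but that $N$ is not $I(M)$.)

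The first half of your plan is also not substantiated, and its logical order is reversed relative to the paper: finite generation of $I(M)$ is Corollary \ref{c3.6}, which the paper deduces \emph{from} Proposition \ref{p3.5} (applied to $I(M)$, using $I(I(M))=I(M)$). Your proposed contradiction does not materialize, because knowing $e(N_k)\geq 1/d$ for every member of an increasing chain $N_1\subset N_2\subset\cdots$ is perfectly compatible with the growth of $M$: the cyclic pieces $\LL\cdot v_j$ may overlap almost entirely above their generators, so the lower growth $e$ need not accumulate along the chain and no bound on ``available rank in $M$'' is violated. The paper's proof avoids $I(M)$ altogether: it inducts on $E(M)=\lfloor d\cdot e(M)\rfloor$; if $M$ is not locally nilpotent it chooses a homogeneous $v$ whose cyclic submodule $N=U(Lie(\LL))v$ has infinite rank, so Lemma \ref{l3.4} gives $E(M/N)\leq E(M)-1$, and the induction hypothesis enlarges $N$ to the required finitely generated $P$ with $M/P$ locally nilpotent. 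To salvage your outline you would have to replace $I(M)$ by such inductively chosen cyclic submodules, i.e.\ essentially reproduce that argument.
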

\begin{proof}  Suppose that $\LL$ is generated by $\bigoplus_{i\leq |d|}\LL_i$. Let $E(M)$ be the integer part of $d\cdot e(M)$. We shall prove by induction on $E(M)$. If $E(M)=0$, then any homogenous elements of $M$ must be contained in a finite $\LL$-submodule of $M$. Otherwise, by Lemma \ref{l3.4}, $E(M)\geq 1$. Hence $M$ is locally nilpotent when $E(M)=0$. If $ M$ is not locally nilpotent, then some homogenous elements $v\in V$ exist, such that $N=U(Lie(\LL))v$ is infinite rank. Hence by Lemma {\ref{l3.4}}, $E(N)\geq d\cdot e(N)\geq 1$. Since $rank(M^n)=rank(N^n)+rank({M/N}^n)$, we can obtain that $E({M/N})\leq E(M)-1$.  Thus by induction, we can find a finitely generated $\LL$-submodule $P/N$ such that $\frac{\LL/N}{P/N}\cong \LL/P$ is locally nilpotent. In addition, $P$ is finitely generated since $N$ and $P/N$ is finitely generated.
\end{proof}

\begin{cor}{\label{c3.6}} Let $\LL$,$M$ be as in Proposition {\ref{p3.5}},then $I(M)$ is a finitely generated $\LL$-module.\end{cor}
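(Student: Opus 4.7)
The plan is to apply Proposition~\ref{p3.5} to $I(M)$ itself and to argue that the locally nilpotent quotient which appears is forced to vanish.

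First I would observe that $I(M)$ is naturally a graded $\LL$-submodule of $M$, since by definition it is generated by homogeneous elements. Applying Proposition~\ref{p3.5} to the graded $\LL$-module $I(M)$ yields a finitely generated graded $\LL$-submodule $N\subseteq I(M)$ such that $I(M)/N$ is locally nilpotent. The goal is then to show $I(M)=N$.

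The key intermediate claim I would establish is the following: for any graded $\LL$-module $W$ with submodule $K$, if $W/K$ is locally nilpotent then $I(W)\subseteq K$. To prove this, take a homogeneous generator $v$ of $I(W)$, so that $v=\sum_i X_iY_i v$ with $X_i\in U(Lie(\LL))$ and $Y_i\in Lie(\LL)$, and let $\bar v\in W/K$ be its image. Iterating the substitution $\bar v \mapsto \sum_i X_iY_i\bar v$ expresses $\bar v$ as a sum of terms of the form $X_{i_1}Y_{i_1}\cdots X_{i_n}Y_{i_n}\bar v$. Since $U(Lie(\LL))$ is generated as an associative algebra by $Lie(\LL)$, each such term is a sum of words of length at least $n$ in $Lie(\LL)$ applied to $\bar v$, and local nilpotency of $W/K$ forces these to vanish once $n$ is large enough. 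Hence $\bar v=0$, so $v\in K$, and therefore $I(W)\subseteq K$.

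Next I would verify that $I(I(M))=I(M)$. Every homogeneous generator $v$ of $I(M)$ already lies in $I(M)$, and the submodule $U(Lie(\LL))\,Lie(\LL)\cdot v$ computed in $M$ coincides with the one computed inside $I(M)$, so $v$ still satisfies the self-containment condition inside $I(M)$ and thus is a generator of $I(I(M))$; the reverse inclusion is tautological. Applying the key claim with $W=I(M)$ and $K=N$ yields $I(M)=I(I(M))\subseteq N$, and combined with $N\subseteq I(M)$ this gives $I(M)=N$, which is finitely generated.

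The main obstacle I expect is the iterated-substitution step: one has to check carefully that the precise notion of local nilpotency used in the paper is strong enough to annihilate $X_{i_1}Y_{i_1}\cdots X_{i_n}Y_{i_n}\bar v$ for sufficiently large $n$. The crucial point is to exploit that after rewriting in the PBW basis of $U(Lie(\LL))$ such products land in words of arbitrarily large length in $Lie(\LL)$ acting on $\bar v$, which is where the definition of locally nilpotent module must be invoked in its strongest form.
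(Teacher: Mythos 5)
Your argument is essentially the paper's own proof: apply Proposition \ref{p3.5} to $I(M)$ to get a finitely generated $N$ with $I(M)/N$ locally nilpotent, use $I(I(M))=I(M)$, and conclude $I(M)/N=0$ because $I$ vanishes on locally nilpotent modules. The only difference is that you spell out the two supporting facts (the quotient-compatibility giving $I(I(M))\subseteq N$ and the iterated-substitution argument showing $I$ of a locally nilpotent module is zero), which the paper compresses into the remark ``Clearly, if $M$ is locally nilpotent, then $I(M)=0$''.
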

\begin{proof} Applying $I(M)$ to Proposition \ref{p3.5}, there exists some finitely generated $\LL$-submodule  $N$ of $I(M)$ such that $I(M)/N$ is locally nilpotent.
Notice that $I(I(M))=I(M)$. Hence, we have $0=I(I(M)/N)=I(M)/N$. Thus $I(M)$ is finitely generated as an $\LL$-module.  
\end{proof}

\begin{theorem}{\label{t3.7}}Suppose that  $\LL=\bigoplus_{i\in \mathbb{Z}} \LL_i$ is a uniformly bounded simple Lie conformal algebra. Then $\LL$ is not rootless and of finite type. In particular, $\LL_0\neq \{0\}$.\end{theorem}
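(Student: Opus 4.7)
The plan is to reduce the theorem to the single assertion that $\LL$ is of finite type; the remaining conclusions then follow immediately. Since $\LL$ is simple, the graded ideal $[\LL_{(\lambda)}\LL]$ is nonzero and must equal $\LL$, so $\LL$ is perfect. By the contrapositive of Proposition \ref{p3.2}, finite type plus perfectness forces $\LL$ to be not rootless. Moreover $\LL_0=\{0\}$ would make every $\LL_i$ trivially $\LL_0$-nilpotent and hence $\LL$ rootless; therefore ``not rootless'' delivers $\LL_0\neq\{0\}$.

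To establish finite type I argue by contradiction: suppose $\LL$ is not of finite type. For each $k\geq 1$ let $\LL^{(k)}$ be the subalgebra generated by $\bigoplus_{|i|\leq k}\LL_i$. Uniform boundedness makes each $\bigoplus_{|i|\leq k}\LL_i$ finite over $\C[\p]$, so $\LL^{(k)}$ is of finite type; by assumption $\LL^{(k)}\subsetneq\LL$ for all $k$ and $\LL=\bigcup_k\LL^{(k)}$. Apply Proposition \ref{p3.5} to the finite-type algebra $\LL^{(1)}$ acting on the graded $\LL^{(1)}$-module $\LL$: this produces a finitely generated graded $\LL^{(1)}$-submodule $N\subseteq\LL$ with $\LL/N$ locally nilpotent as an $\LL^{(1)}$-module. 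Simplicity of $\LL$ forces the $\LL$-ideal generated by $N$ to be $\{0\}$ or $\LL$, dichotomizing the argument.

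If $N\neq\{0\}$, take homogeneous $\LL^{(1)}$-module generators $v_1,\dots,v_m$ of $N$ and let $\LL''$ be the subalgebra of $\LL$ generated by a finite generating set of $\LL^{(1)}$ together with $\{v_1,\dots,v_m\}$; then $\LL''$ is finitely generated and contains $\LL^{(1)}+N$. The aim is to conclude $\LL''=\LL$, contradicting the hypothesis. Local nilpotence of $\LL/N$ as an $\LL^{(1)}$-module implies that for each $w\in\LL$ and $a\in\LL^{(1)}$ the iterated brackets $[a_\lambda[a_\lambda\cdots[a_\lambda w]\cdots]]$ eventually lie in $N\subseteq\LL''$; an inductive Jacobi-identity manipulation in the spirit of the minimality argument in the proof of Proposition \ref{p3.2} then forces $w$ itself into $\LL''$. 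If instead $N=\{0\}$, then $\LL$ is locally nilpotent as an $\LL^{(1)}$-module; in particular $\LL_0$ acts locally nilpotently on each finite $\C[\p]$-module $\LL_i$, and an Engel-type argument for finite modules (the same mechanism underlying Lemma \ref{l3.3}) upgrades this to nilpotence, so $\LL$ is rootless. Iterating the entire procedure with $\LL^{(k)}$ in place of $\LL^{(1)}$, either some step produces a nonzero $N_k$ (reducing to the first case) or $\LL$ is locally nilpotent as an $\LL^{(k)}$-module for every $k$, hence as an $\LL$-module since $\LL=\bigcup_k\LL^{(k)}$; but a perfect Lie conformal algebra cannot be locally nilpotent on itself, yielding the desired contradiction.

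The principal obstacle is the descent step in the case $N\neq\{0\}$: recovering $w\in\LL''$ from the fact that its iterated $\LL^{(1)}$-brackets eventually fall into $N$. This will require careful combinatorial bookkeeping parallel to the minimality analysis of Proposition \ref{p3.2}. A secondary delicate point is the Engel-type upgrade from local nilpotence to uniform nilpotence applied to the $\LL_0$-action on the finite-rank modules $\LL_i$ in the case $N=\{0\}$, which requires that both $\LL_0$ and $\LL_i$ sit inside a finite $\C[\p]$-module in a manner compatible with the hypotheses of the available Engel theorem.
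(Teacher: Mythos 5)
There is a genuine gap, and it sits exactly where you flag it: the descent step in the case $N\neq\{0\}$. Knowing that for each $w\in\LL$ the iterated brackets $ad^n({a}_{(\lambda)})w$ with $a\in\LL^{(1)}$ eventually land in $N\subseteq\LL''$ does \emph{not} force $w\in\LL''$; there is no inverse to the bracket operation, and the minimality argument of Proposition \ref{p3.2} offers no such mechanism (it runs in the opposite direction, expressing elements as brackets of things of intermediate degree inside a perfect algebra). As a test case, an element $w$ annihilated by ${\LL^{(1)}}_{(\lambda)}$ satisfies your hypothesis vacuously but nothing places it in $\LL''$. The paper's proof avoids this trap by exploiting simplicity at the level of individual elements: for a homogeneous $x$, the ideal generated by $[\LL_{(\lambda)}x]$ contains $x$, which yields a finite-type subalgebra $H$ (depending on $x$) with $x\in U(Lie(H))Lie(H)\cdot x$; this ``recurrence'' is encoded in the submodule $I(\LL)$, Corollary \ref{c3.6} shows $I(\LL)$ is a finitely generated $H$-module, the recurrence gives $[\LL_{(\lambda)}I(\LL)]\subset[P_{(\lambda)}I(\LL)]+I(\LL)$, and then $X=\sum_{n\geq 0}ad^n(P_{(\lambda)})I(\LL)$ is a nonzero ideal, hence all of $\LL$, exhibiting $\LL$ as a finitely generated $H$-module. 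Your proposal never uses this recurrence device, and without it the reduction from ``brackets eventually in $N$'' to ``$w\in\LL''$'' has no support.

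The terminal case of your iteration is also unsound. First, from ``$\LL$ is locally nilpotent as an $\LL^{(k)}$-module for every $k$'' you cannot conclude local nilpotence as an $\LL$-module: the paper's definition requires, for each $m$, a single $n$ with $ad^n(a_{(\lambda)})m=0$ for \emph{all} $a$, and the integers $n_k$ you obtain for the subalgebras $\LL^{(k)}$ need not be bounded. Second, the claim that a perfect Lie conformal algebra cannot act locally nilpotently on itself is asserted without proof and is far from obvious (already for Lie algebras there are perfect algebras, e.g.\ of McLain type, in which every finitely generated subalgebra is nilpotent), so it cannot serve as the contradiction closing the argument. The first part of your proposal (finite type $\Rightarrow$ not rootless via Proposition \ref{p3.2}, and not rootless $\Rightarrow\LL_0\neq\{0\}$) agrees with the paper and is fine; the proof of finite type itself is the missing core.
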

\begin{proof}
For each homogeneous element $x\in \LL$, the ideal generated by $[\LL_{(\lambda)}x]$ must contain $x$ by the simplicity of $\LL$. Thus there exists some finite type graded subalgebra $H$ of $\LL$  such that $x\in ad(U(Lie(H))Lie(H))\cdot x$.  Consider $\LL$ as an $H$-module by adjoint action. By Proposition \ref{p3.5}, there exists some finitely generated $H$-submodule $P$ of $\LL$ such that $\LL/P$ is a locally nilpotent $H$-module. Let $\mathcal{I}$ be the $H$-submodule $I(\LL)$ of $\LL$. Next, 
Since $\LL/P$ is locally nilpotent, for any  homogeneous elements $v\in \LL/P$, there exists a finite nilpotent $H$-submodule  $V/P$ of $\LL/P$ contains $v$, \[ P=V_0\subset V_1\subset \cdots \subset V_n=V,\]
where   $V_{i+1}/V_i$ is a trivial $H$-module for each $i\geq 0$.  For any $x \in \mathcal{I}$, we have $u \in U(Lie(H))Lie(H)$ such that $u\cdot x=x$. Hence
for each $j\geq 0$, \[ u\cdot (v_{(j)} x)\equiv v_{(j)} (u\cdot x)\ mod [{V_{n-1}}_{(\lambda)} \mathcal{I}]\equiv v_{(j)} x  mod [{V_{n-1}}_{(\lambda)} \mathcal{I}].\]
Applying $u$ on  $v_{(j)} x$ for $n$ times, we can see that 
\[ [\LL_{(\lambda)}\mathcal{I}]\subset [P_{(\lambda)}\mathcal{I}]+\mathcal{I}.\]
Let $X=\sum_{n\geq 0} ad^n(P_{(\lambda)})\mathcal{I}$. Since $x\in \mathcal{I}$, $X$ is a non-zero graded ideal of $\LL$. Hence $X=\LL$.  From 
Corollary \ref{c3.6}, $P$ and $\mathcal{I}$ are both finitely generated $H$-modules. Hence $\LL$ is a finitely generated $H$-module. Therefore $\LL$ is finite type. Hence by  Proposition \ref{p3.2},  $\LL$ is not rootless.  
\end{proof}

\section{Simple graded Lie conformal algebras with bound one}
\subsection{Preliminaries}

In this section, we aim to classify the following simple graded Lie conformal algebra $\mathcal{L}=\bigoplus_{i\in \Z}\mathcal{L}_i$ such that
$rank{\mathcal{L}_i}\leq 1$ for each $i\in\Z$.  From Theorem {\ref{t3.7}},  we obtain that $\LL_0\neq \{0\}$. By \cite{DK}, $\LL_0$ is either abelian or isomorphic to $Vir$. 
The classification was completed in \cite{X} when $\LL_0\cong Vir$. Hence we will focus on the case that $\LL_0$ is abelian. 
Let  $\mathcal{L}=\bigoplus_{i\in \Z}\mathcal{L}_i$ be a graded Lie conformal algebra.
The following notation will be used in the sequel, 
\begin{enumerate}[fullwidth,itemindent=0em,label=(\arabic*)]
	\item[$\bullet$] $Supp(\mathcal{L}):=\{i\in \mathbb{Z}| rank(\LL_{i})=1\}.$ 
	\item[$\bullet$] For each $i\in Supp(\mathcal{L})$, we use $L_i$ to denote a $\mathbb{C}[\p]$-module basis of $\LL_{i}$.
	\item[$\bullet$] For each $i,j$,$i+j\in Supp(\mathcal{L})$, we use $p_{i,j}(\partial,\lambda)$ as the structure polynomial between $L_i$ and $L_j$ for some $p_{i,j}(\p,\M)\in \mathbb{C}[\p,\M]$, that is $[{L_i}_\lambda {L_j}]=p_{i,j}(\partial,\lambda)L_{i+j}$. 
	\item[$\bullet$] $p_{i,j}(\p,\M)$ will be abbreviated as $p_{i,j}(\p)$(respectively, $p_{i,j}(\M)$) when $p_{i,j}(\p,\M)\in \C[\p](respectively, \C[\lambda])$. 
	\item[$\bullet$] $Supp_1(\mathcal{L}):=\{j\in Supp(\mathcal{L})|p_{-j,j}(\p,\M)\neq 0\  and\  p_{0,j}(\p,\M)\neq 0\}.$
	\item[$\bullet$] $\mathcal{L}[k]$: the subalgebra generated by $\LL_{k}$,$\LL_{-k}$,$\LL_{0}$  for $k\in Supp_1(\mathcal{L})$.
\end{enumerate}
By skew-symmetric, we know that $p_{i,j}(\p,\M)=-p_{j,i}(\p,-\p-\M)$. In particular, $p_{i,i}(\p,\M)=-p_{i,i}(\p,-\p-\M,)$. Hence $(\p+2\M)|p_{i,i}(\p,\M)$.
In addition,  we have $p_{0,i}(\p,\M)\in \C[\M]$.

\begin{lem}{\label{l4.1}}Suppose that $\LL=\bigoplus_{i\in \mathbb{Z}} \LL_i$ is a  $\Z$-graded Lie conformal algebra. Let $S$ be some subset of $\Z$.  Set $\mathcal{S}=\bigoplus_{i\in S}\LL_i$ and $\mathcal{S}^{\perp}=\bigoplus_{i\not\in S}\LL_i$. Suppose that $ [\mathcal{S}_{(\lambda)}\mathcal{S}^{\perp}]\subset \mathcal{S}^{\perp}$. Then $\mathcal{S}^{\perp}+[{\mathcal{S}^{\perp}}_{(\lambda)} \mathcal{S}^{\perp}]$ is an ideal of $\LL$. In particular, if $\LL$ is simple graded, then
	$\LL=\mathcal{S}^{\perp}+[{\mathcal{S}^{\perp}}_{(\lambda)} \mathcal{S}^{\perp}]$ and $\mathcal{S}\subset[{\mathcal{S}^{\perp}}_{(\lambda)}{\mathcal{S}^{\perp}}]$.
\end{lem}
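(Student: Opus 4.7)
The plan is to verify that
\[I := \mathcal{S}^\perp + [{\mathcal{S}^\perp}_{(\lambda)} \mathcal{S}^\perp]\]
is a graded ideal of $\LL$; the second assertion then follows immediately from simple gradedness. First I would note that $I$ is a $\C[\partial]$-submodule of $\LL$ (by conformal sesquilinearity applied to $[{\mathcal{S}^\perp}_{(\lambda)} \mathcal{S}^\perp]$) and inherits a natural $\Z$-grading. Since $[{\LL_p}_{(\lambda)} \LL_q] \subset \LL_{p+q}$ gets absorbed into $\LL_{p+q}$ whenever $p+q \notin S$, one obtains the convenient description
\[I_i = \LL_i \text{ if } i \notin S, \qquad I_i = \sum_{p+q=i,\ p, q \notin S} [{\LL_p}_{(\lambda)} \LL_q] \text{ if } i \in S.\]
The task then reduces to verifying $[{\LL_r}_{(\lambda)} I_i] \subset I_{r+i}$ for every pair $r, i \in \Z$.

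The case $r+i \notin S$ is trivial since $I_{r+i} = \LL_{r+i}$. When $r+i \in S$ and $i \notin S$, either $r \in S$ and the hypothesis forces $[{\LL_r}_{(\lambda)} \LL_i] \subset \mathcal{S}^\perp_{r+i} = 0$, or $r \notin S$ and the bracket lies directly in $[{\mathcal{S}^\perp}_{(\lambda)} \mathcal{S}^\perp]_{r+i} = I_{r+i}$. The delicate case is $r+i \in S$ and $i \in S$, where I would bracket $x \in \LL_r$ against a typical generator $c_{(n)} a$ of $I_i$ with $c \in \LL_p$, $a \in \LL_q$, $p, q \notin S$, $p + q = i$. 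Using the Jacobi identity
\[x_{(m)}(c_{(n)} a) = c_{(n)}(x_{(m)} a) + \sum_{j \geq 0} \binom{m}{j} (x_{(j)} c)_{(m+n-j)} a,\]
each inner factor can be tracked by grade: whenever both factors sit in $\mathcal{S}^\perp$, the corresponding term belongs manifestly to $I_{r+i}$; whenever one drifts into $\mathcal{S}$ (which can only occur for $r \notin S$, since $x \in \mathcal{S}$ would force $x_{(m)} a, x_{(j)} c \in \mathcal{S}^\perp$ by hypothesis), the skew-symmetric upgrade $[{\mathcal{S}^\perp}_{(\lambda)} \mathcal{S}] \subset \mathcal{S}^\perp$ of the assumption kicks in, and the term lies in $\mathcal{S}^\perp_{r+i}$, which vanishes because $r + i \in S$.

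The main obstacle is precisely this bookkeeping inside the Jacobi expansion: one must guarantee that no bracket produced along the way leaks into an $\mathcal{S}$-graded piece outside the already-identified $I$, and the mechanism that makes every stray term collapse is the vanishing $\mathcal{S}^\perp_{r+i} = 0$ for $r + i \in S$ together with the skew-symmetric form of the hypothesis. Once $I$ is established as a graded ideal, simple gradedness forces $I = \LL$ (the case $\mathcal{S}^\perp = 0$ being vacuous, as then $\mathcal{S} = \LL$ and the claimed conclusion is empty); comparing $\mathcal{S}$-components in $\LL = \mathcal{S}^\perp + [{\mathcal{S}^\perp}_{(\lambda)} \mathcal{S}^\perp]$ then yields $\mathcal{S} \subset [{\mathcal{S}^\perp}_{(\lambda)} \mathcal{S}^\perp]$.
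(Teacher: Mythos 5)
Your proof is correct and takes essentially the same route as the paper's: a direct verification, via the Jacobi identity together with the hypothesis $[\mathcal{S}_{(\lambda)}\mathcal{S}^{\perp}]\subset\mathcal{S}^{\perp}$ and its skew-symmetric form $[{\mathcal{S}^{\perp}}_{(\lambda)}\mathcal{S}]\subset\mathcal{S}^{\perp}$, that $\mathcal{S}^{\perp}+[{\mathcal{S}^{\perp}}_{(\lambda)}\mathcal{S}^{\perp}]$ is a graded ideal, followed by the simplicity argument. Your grade-by-grade bookkeeping (and the observation that stray terms land in $\mathcal{S}^{\perp}_{r+i}=0$) is just a componentwise elaboration of the paper's two-line subspace computation, where such terms are simply absorbed into $\mathcal{S}^{\perp}\subset I$.
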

\begin{proof}Only notice that \[ [{\mathcal{S}^{\perp}}_{(\mu)}[{\mathcal{S}^{\perp}}_{(\lambda)} {\mathcal{S}^{\perp}}]]\subset [{\mathcal{S}^{\perp}}_{(\lambda)} ({\mathcal{S}^{\perp}}+\mathcal{S})]\subset \mathcal{S}^{\perp}+[{\mathcal{S}^{\perp}}_{(\lambda)} \mathcal{S}^{\perp}],\] 
	\[ [\mathcal{S}_{(\mu)}[\mathcal{S}^{\perp}_{(\lambda)} \mathcal{S}^{\perp}]]\subset [[\mathcal{S}_{(\mu)} \mathcal{S}^{\perp}]_{(\lambda+\mu)}\mathcal{S}^{\perp}]+[{\mathcal{S}^{\perp}}_{(\lambda)} [\mathcal{S}_{(\mu)}\mathcal{S}^{\perp}]]\subset \mathcal{S}^{\perp}+[\mathcal{S}^{\perp}_{(\lambda)} \mathcal{S}^{\perp}].\]
\end{proof}

\begin{lem}{\label{l4.2}}
	 Suppose that $k\in Supp(\LL)$ such that $p_{0,k}(\M)=0$. If $\LL$ is simple graded, then there exists some $m\in Supp(\LL)$ such that $p_{0,m}(\M)\neq 0$ and  $p_{m,k-m}(\p,\M)\neq 0$.In particular, $Supp_1(\mathcal{L})$ is not empty;
\end{lem}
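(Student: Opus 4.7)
The plan is to apply Lemma~\ref{l4.1} with the set $T = \{j \in \Z : p_{0,j}(\lambda) = 0\}$, which contains the given $k$. Set $\mathcal{T} = \bigoplus_{j \in T}\LL_j$ and $\mathcal{T}^{\perp} = \bigoplus_{j \notin T}\LL_j$. The crucial step is to verify the hypothesis $[\mathcal{T}_{(\lambda)}\mathcal{T}^{\perp}] \subset \mathcal{T}^{\perp}$. For $i \in T$ and $j \notin T$, I would compute $[L_{0\,\alpha}[L_{i\,\beta}L_j]]$ in two ways: by conformal sesquilinearity it equals $p_{i,j}(\partial+\alpha,\beta)\,p_{0,i+j}(\alpha)\,L_{i+j}$, and by the Jacobi identity the term $[[L_{0\,\alpha}L_i]_{\alpha+\beta}L_j]$ vanishes since $p_{0,i}(\alpha)=0$, leaving only $p_{0,j}(\alpha)\,p_{i,j}(\partial,\beta)\,L_{i+j}$. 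Equating produces the polynomial identity
\[
p_{i,j}(\partial+\alpha,\beta)\,p_{0,i+j}(\alpha) \;=\; p_{0,j}(\alpha)\,p_{i,j}(\partial,\beta).
\]
Since $p_{0,j}(\alpha) \neq 0$ by $j \notin T$, any non-zero $p_{i,j}$ forces $p_{0,i+j}(\alpha) \neq 0$, i.e.\ $i+j \notin T$. Hence non-trivial brackets from $\LL_i \otimes \LL_j$ land in $\mathcal{T}^{\perp}$.

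Lemma~\ref{l4.1} then makes $\mathcal{T}^{\perp} + [\mathcal{T}^{\perp}_{(\lambda)}\mathcal{T}^{\perp}]$ an ideal of $\LL$, and graded simplicity forces $\mathcal{T} \subset [\mathcal{T}^{\perp}_{(\lambda)}\mathcal{T}^{\perp}]$. In particular, $L_k$ is a sum of brackets $[\LL_{i\,(\lambda)}\LL_{k-i}]$ with both $i,\,k-i \notin T$, and at least one summand must be non-zero. Any index $m=i$ witnessing such a non-zero summand satisfies all three requirements: $m \in Supp(\LL)$, $p_{0,m}(\lambda) \neq 0$, and $p_{m,k-m}(\partial,\lambda) \neq 0$.

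For the ``in particular'' clause, recall that throughout this subsection $\LL_0$ is abelian, so $p_{0,0}(\lambda) = 0$ and $0 \in T$; also $0 \in Supp(\LL)$ thanks to Theorem~\ref{t3.7} together with $rank\,\LL_i \leq 1$. Applying the main claim with $k=0$ yields $m \in Supp(\LL)$ with $p_{0,m}(\lambda) \neq 0$ and $p_{m,-m}(\partial,\lambda) \neq 0$; skew-symmetry upgrades the latter to $p_{-m,m}(\partial,\lambda) \neq 0$, so $m \in Supp_1(\LL)$.

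The main obstacle is the Jacobi-identity computation above that closes $T$ under the adjoint action of $\mathcal{T}^{\perp}$; once this closure is in hand, the remainder is a direct invocation of Lemma~\ref{l4.1} and the graded simplicity of $\LL$.
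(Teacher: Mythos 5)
Your proposal is correct and takes essentially the same route as the paper: decompose $\LL$ according to the vanishing of $p_{0,\cdot}$, verify the hypothesis $[\mathcal{T}_{(\lambda)}\mathcal{T}^{\perp}]\subset \mathcal{T}^{\perp}$ via the Jacobi identity with $L_0$ (the paper leaves this implicit; it is the content of Equation \eqref{eq4-1} in Lemma \ref{l4.3}), and then invoke Lemma \ref{l4.1} and graded simplicity, taking $k=0$ for the statement about $Supp_1(\LL)$. The only point the paper makes explicit that you omit is that $\mathcal{T}^{\perp}\neq\{0\}$ (equivalently, $L_0$ is not central, which follows from graded simplicity); this small check is needed, since otherwise the ideal $\mathcal{T}^{\perp}+[{\mathcal{T}^{\perp}}_{(\lambda)}\mathcal{T}^{\perp}]$ would be $\{0\}$ and simplicity would not force $\mathcal{T}\subset[{\mathcal{T}^{\perp}}_{(\lambda)}\mathcal{T}^{\perp}]$.
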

\begin{proof}
	 Let  $S=\{i\in Supp(\LL)|p_{0,i}(\p,\M)=0\}$. Set $\mathcal{S}=\bigoplus_{i\in S}\LL_i$
		and apply $\mathcal{S}$ to Lemma \ref{l4.1}. We can find that  $\mathcal{S}^{\perp}=\bigoplus_{i\not\in S}\LL_i$.  Since $\LL$ is simple graded, $L_0$ is not in the center of $\LL$. Hence $\mathcal{S}^{\perp}\neq \{0\}$. Then by  Lemma \ref{l4.1},  $\mathcal{S}\subset [{\mathcal{S}^{\perp}}_{(\lambda)} \mathcal{S}^{\perp}]$. Hence for each $k\in \mathbb{C}$, there exists some $L_m\in \mathcal{S}^{\perp}$ such that $p_{0,m}(\M)\neq 0$ and  $p_{m,k-m}(\p,\M)\neq 0$.
		\end{proof}

\begin{lem}\label{l4.3}
	Suppose that $k,s\in Supp(\mathcal{L})$. If $p_{k,s}(\partial,\lambda)\ne 0$,then
	\begin{equation}\label{eq4-1}
		p_{0,k+s}(\lambda)=p_{0,k}(\lambda)+p_{0,s}(\lambda).
	\end{equation}
\begin{enumerate}[fullwidth,itemindent=0em,label=(\arabic*)]
	\item 
	If $p_{0,k}(\lambda)=0$ and $p_{0,s}(\lambda)\neq 0$, then $p_{k,s}(\partial,\lambda)\in \mathbb{C}[\lambda]$.
	\item
	If $p_{0,k+s}(\partial,\lambda)=0$ and $p_{0,s}(\lambda)\neq 0$, then $p_{k,s}(\partial,\lambda)\in \mathbb{C}[\partial]$.
	\item
	If $p_{0,k}(\partial,\lambda),p_{0,s}(\partial,\lambda),p_{0,k+s}(\partial,\lambda)\ne 0$, then $deg p_{k,s}(\partial,\lambda)\leq 1$.
	In addition, $deg p_{k,s}(\partial,\lambda)=1$ if and only if 
	\begin{align*}
		p_{0,s}(\partial,\lambda)&=c p_{0,k}(\partial,\lambda),\\
		p_{0,k+s}(\partial,\lambda)&=(c+1) p_{0,k}(\partial,\lambda),
	\end{align*}
	In this case,    $p^1_{k,s}(\partial,\lambda)=c'(\partial+(c+1)\lambda)$, where $c'\in{\mathbb{C}}^{*}$ and $c\neq 0,-1$.
\end{enumerate}
\end{lem}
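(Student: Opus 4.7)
The plan is to derive every assertion from the single polynomial identity produced by the Jacobi triple $L_0,L_k,L_s$. Since $p_{0,j}(\partial,\lambda)\in\C[\lambda]$ for every $j$, unwinding
\[[L_{0\,\lambda}[L_{k\,\mu}L_s]] = [[L_{0\,\lambda}L_k]_{\lambda+\mu}L_s] + [L_{k\,\mu}[L_{0\,\lambda}L_s]]\]
via $\lambda$-sesquilinearity and cancelling the common factor $L_{k+s}$ yields
\[p_{k,s}(\partial+\lambda,\mu)\,p_{0,k+s}(\lambda) = p_{0,k}(\lambda)\,p_{k,s}(\partial,\lambda+\mu) + p_{0,s}(\lambda)\,p_{k,s}(\partial,\mu). \quad (\star)\]
Write $p_{k,s}(\partial,\mu)=A_D(\mu)\partial^D+(\text{lower in }\partial)$ with $A_D\ne 0$ and $D=\deg_\partial p_{k,s}$. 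The coefficient of $\partial^D$ in $(\star)$ is
\[A_D(\mu)\,p_{0,k+s}(\lambda) = p_{0,k}(\lambda)\,A_D(\lambda+\mu) + p_{0,s}(\lambda)\,A_D(\mu),\]
and comparing the leading $\mu$-coefficient on each side at once gives $p_{0,k+s}=p_{0,k}+p_{0,s}$, i.e.\ \eqref{eq4-1}.

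Items (1) and (2) then drop out of $(\star)$ by direct substitution. In (1), setting $p_{0,k}=0$ and using $p_{0,k+s}=p_{0,s}\ne 0$ forces $p_{k,s}(\partial+\lambda,\mu)=p_{k,s}(\partial,\mu)$, so $p_{k,s}\in\C[\mu]$. In (2), setting $p_{0,k+s}=0$ and using $p_{0,k}=-p_{0,s}\ne 0$ forces $p_{k,s}(\partial,\lambda+\mu)=p_{k,s}(\partial,\mu)$, so $p_{k,s}\in\C[\partial]$.

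For (3) assume all three $p_{0,*}$ are nonzero. Since $p_{0,k}\ne 0$, the $\partial^D$-equation additionally gives $A_D(\mu)=A_D(\lambda+\mu)$, so $A_D=c_D\in\C^*$. The coefficient of $\partial^{D-1}$ in $(\star)$, after using \eqref{eq4-1}, collapses to
\[c_D D\lambda\,p_{0,k+s}(\lambda)=\bigl(A_{D-1}(\lambda+\mu)-A_{D-1}(\mu)\bigr)\,p_{0,k}(\lambda).\]
The left side is $\mu$-free, so $\deg_\mu A_{D-1}\le 1$; writing $A_{D-1}(\mu)=b_0+b_1\mu$ yields $p_{0,k+s}=\alpha\,p_{0,k}$ with $\alpha:=b_1/(c_D D)$, whence $p_{0,s}=(\alpha-1)p_{0,k}$, and the nonvanishing hypotheses give $\alpha\ne 0,1$. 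Dividing $(\star)$ through by $p_{0,k}(\lambda)$ produces the clean functional equation
\[\alpha\,p_{k,s}(\partial+\lambda,\mu)=p_{k,s}(\partial,\lambda+\mu)+(\alpha-1)\,p_{k,s}(\partial,\mu),\quad (\star\star)\]
which is homogeneous under the total-degree grading of $\C[\partial,\mu]$ and therefore holds on the top-degree homogeneous component $P$ of $p_{k,s}$.

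Finally, differentiating $(\star\star)$ in $\lambda$ and evaluating at $\lambda=0$ yields the first-order PDE $\alpha P_1=P_2$ (subscripts denote partial derivatives in the first and second arguments), whose homogeneous solutions of degree $D'$ are precisely the scalar multiples of $(\partial+\alpha\mu)^{D'}$; thus $P(\partial,\mu)=\beta(\partial+\alpha\mu)^{D'}$ for some $\beta\in\C^*$. Plugging this back into $(\star\star)$ and setting $u=\partial+\alpha\mu$ reduces the identity to
\[(u+\alpha\lambda)^{D'}=\alpha(u+\lambda)^{D'}-(\alpha-1)u^{D'},\]
and the coefficient of $u^{D'-2}\lambda^2$ on the difference of the two sides is $\binom{D'}{2}\alpha(\alpha-1)$, which is nonzero for $D'\ge 2$ since $\alpha\ne 0,1$; hence $D'\le 1$, proving $\deg p_{k,s}\le 1$. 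When $D'=1$, setting $c:=\alpha-1$ and $c':=\beta$ recovers $p^1_{k,s}(\partial,\lambda)=c'(\partial+(c+1)\lambda)$ with $c\ne 0,-1$, $c'\ne 0$, and the proportionalities $p_{0,s}=c\,p_{0,k}$, $p_{0,k+s}=(c+1)p_{0,k}$. The only step requiring any insight is the reduction to the PDE $\alpha P_1=P_2$, which rigidifies the shape of $P$; the remainder is routine coefficient bookkeeping in $(\star)$ and a one-term binomial expansion.
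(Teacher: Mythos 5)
Your proof is correct, and its skeleton coincides with the paper's: you start from the same Jacobi identity for $L_0,L_k,L_s$ (your $(\star)$ is the paper's \eqref{eq4-2}), derive \eqref{eq4-1} by comparing leading coefficients in the top $\partial$-coefficient equation (the paper's \eqref{eq4-3}), and prove (1) and (2) by the same direct substitutions. Where you genuinely diverge is in part (3). The paper specializes \eqref{eq4-2} at $\mu=0$, factors out $\lambda$, and invokes a divisibility/coprimality argument to obtain $p_{0,s}=c\,p_{0,k}$, then reaches the shifted-difference identity \eqref{eq4-9} and kills degree $\ge 2$ by the binomial comparison \eqref{eq4-10}; you instead extract the $\partial^{D}$- and $\partial^{D-1}$-coefficients of $(\star)$ to get $A_D$ constant and the proportionality $p_{0,k+s}=\alpha p_{0,k}$ directly (arguably tighter than the paper's coprimality step, which is asserted without justification), then pass to the top homogeneous component, rigidify it via the first-order equation $\alpha P_1=P_2$ to $P=\beta(\partial+\alpha\mu)^{D'}$, and substitute back to force $D'\le 1$ --- ending, like the paper, with a binomial-coefficient obstruction. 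Two small points you should state explicitly: in (3) the case $D=\deg_\partial p_{k,s}=0$ has to be disposed of first (there the constancy of $A_0$ already gives $\deg p_{k,s}=0$, so the $\partial^{D-1}$ extraction is only invoked when $D\ge 1$); and, like the paper's own argument, you only establish the ``only if'' half of the stated equivalence in (3) (degree one forces the proportionality and the leading form $c'(\partial+(c+1)\lambda)$), which is all the substantive content the paper proves as well.
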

\begin{proof}
	Consider the Jacobi-identity of $L_0$,$L_k$,$L_s$ and $L_{k+s}$,we can find that 
	\begin{equation}\label{eq4-2}
		p_{0,k}(\lambda)p_{k,s}(\partial,\lambda+\mu)=p_{0,k+s}(\lambda)p_{k,s}(\partial+\lambda,\mu)-p_{0,s}(\lambda)p_{k,s}(\partial,\mu).
	\end{equation}
	Write $p_{k,s}(\partial,\lambda)=\sum f_{i}(\lambda){\partial}^i$ for some $f_{i}(\M)\in\C[\M]$ and compare the  highest degree of $\partial$, say $m$, of two sides of Equation \eqref{eq4-2}, we can get that  
	\begin{equation}\label{eq4-3}
		p_{0,k}(\lambda)f_{m}(\lambda+\mu)=p_{0,k+s}(\lambda)f_{m}(\mu)-p_{0,s}(\lambda)f_{m}(\mu).
	\end{equation}
   Comparing the degree of coefficients  before $\mu$ in the Equation \eqref{eq4-3}, we can find that
	\begin{equation*}
		p_{0,k+s}(\lambda)=p_{0,k}(\lambda)+p_{0,s}(\lambda).
	\end{equation*}
	
	\begin{enumerate}[fullwidth,itemindent=0em,label=\rm(\arabic*)]
		\item 
		If $p_{0,k}(\partial,\lambda)=0$ and $p_{0,s}(\lambda)\neq 0$, then
		$p_{0,k+s}(\lambda)=p_{0,s}(\lambda)$. \\
		Plugging $p_{0,k}(\partial,\lambda)=0$ into Equation \eqref{eq4-2}, we can find that 
		\[p_{0,k+s}(\lambda)p_{k,s}(\partial+\lambda,\mu)=p_{0,s}(\lambda) p_{k,s}(\partial,\mu).\]
		Hence $p_{k,s}(\partial+\lambda,\mu)=p_{k,s}(\partial,\mu)$.
		Thus
		$p_{k,s}(\partial,\mu)\in \mathbb{C}[\mu]$.
		\item
		If $p_{0,k+s}(\lambda)=0$, then $p_{0,k}(\lambda)=-p_{0,s}(\lambda)$. Plugging $p_{0,k+s}(\partial,\lambda)=0$ into Equation \eqref{eq4-2}, we have 
		\[p_{0,k}(\lambda)p_{k,s}(\partial,\lambda+\mu)=-p_{0,s}(\lambda)p_{k,s}(\partial,\mu).\]
		Thus $p_{k,s}(\partial,\mu)\in\mathbb{C}[\partial]$.
		\item
		Assume that $degp_{k,s}(\partial,\lambda)>0$.
		Plugging Equation \eqref{eq4-1} into Equation \eqref{eq4-2} and let $\mu=0$, we have 
		\begin{equation}\label{eq4-4}
			p_{0,k}(\lambda)(p_{k,s}(\partial,\lambda)-p_{k,s}(\partial+\lambda,0))=p_{0,s}(\lambda)(p_{k,s}(\partial+\lambda,0)-p_{k,s}(\partial,0)).
		\end{equation}
		where $p_{0,k}(\lambda),p_{0,s}(\lambda)\ne 0$. Thus one can find that both sides of Equation \eqref{eq4-4} are non-zero. Otherwise,
		\begin{equation}\label{eq4-5}
			p_{k,s}(\partial,\lambda)-p_{k,s}(\partial+\lambda,0)=p_{k,s}(\partial+\lambda,0)-p_{k,s}(\partial,0)=0.
		\end{equation}
		Since $degp_{k,s}(\partial,\lambda)>0$, we can find that  $p_{k,s}(\partial,0)\in \C^*$ from Equation \eqref{eq4-5}, which is impossible.
		Since $\lambda$ is a factor of both 	$p_{k,s}(\partial,\lambda)-p_{k,s}(\partial+\lambda,0)$ and $p_{k,s}(\partial+\lambda,0)-p_{k,s}(\partial,0)$, we have 
		\begin{equation}{\label{eq4-6}}
			p_{k,s}(\partial,\lambda)-p_{k,s}(\partial+\lambda,0)=\lambda g(\partial,
			\lambda),\\ p_{k,s}(\partial+\lambda,0)-p_{k,s}(\partial,0)=\lambda h(\partial,\lambda)
		\end{equation} 
		for some $g(\partial,\lambda), h(\partial,\lambda)\in \mathbb{C}[\partial,\lambda]$.
		Plugging Equation \eqref{eq4-6} into Equation \eqref{eq4-4}, one can see that  
	\[ p_{0,k}(\lambda) g(\partial,\lambda)=p_{0,s}(\lambda) h(\partial,\lambda) ,\]
		Since $p_{0,k}(\lambda)$and $h(\partial,\lambda)$ is coprime, $p_{0,k}(\lambda)|p_{0,s}(\lambda)$.
		Similarly, $p_{0,s}(\lambda)|p_{0,k}(\lambda)$.
		Thus $p_{0,s}(\lambda)=cp_{0,k}(\lambda)$ and 	$p_{0,k+s}(\lambda)=(c+1)p_{0,k}(\lambda)$ for some $c\neq 0,-1 $.       
	 Now Equation \eqref{eq4-4} and Equation \eqref{eq4-2} will be  translated as  
		\begin{equation}\label{eq4-7}
			p_{k,s}(\partial,\lambda)=(c+1)p_{k,s}(\partial+\lambda,0)-c p_{k,s}(\partial,0),
		\end{equation}	
		and
		\begin{equation}\label{eq4-8}
			p_{k,s}(\partial,\lambda+\mu)=(c+1)p_{k,s}(\partial+\lambda,\mu)-c p_{k,s}(\partial,\mu).
		\end{equation}
		Plugging Equation \eqref{eq4-7} into Equation \eqref{eq4-8}, we will find that
		\begin{equation}\label{eq4-9}
			p_{k,s}(\partial+\mu,0)-p_{k,s}(\partial,0)=p_{k,s}(\partial+\lambda+\mu,0)-p_{k,s}(\partial+\lambda,0).
		\end{equation}
		Let $n=degp_{k,s}(\partial,\lambda)$.  Then from Equation \eqref{eq4-9}, one can obtain that 
		\begin{equation}\label{eq4-10} (\partial+\mu)^n-{\partial}^n=(\partial+\lambda+\mu)^n-(\partial+\lambda)^n. 
		\end{equation}
        Equation \eqref{eq4-10} forces that $n=1$. In addition, Equation \eqref{eq4-7} tell us that 
		\[ p^1_{k,s}(\partial,\lambda)=c'(\partial+(c+1)\lambda) ,\]
		where $c'\in\mathbb{C}^*$ and $c\neq 0,-1$.
	\end{enumerate}
\end{proof}

\begin{rem}When $\LL_0$ is not abelian, the relations between $p_{0,k}(\p,\M)$, $p_{0,s}(\p,\M)$  and  $p_{0,s+k}(\p,\M)$  is more complicated. See \cite{X}.
	\end{rem}   
	\begin{cor}\label{c4.5}
		 Suppose that $\LL$ is simple graded and  $k \in Supp_1(\LL)$. Then  for each $j\in Supp(\LL)$, $ p_{0,j}(\M)=c_{j} p_{0,k}(\M)$ for some $c_j\in \C$. 	
	\end{cor}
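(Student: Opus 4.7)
The plan is to argue by contradiction, combining Lemma~\ref{l4.1} applied to the ``good set'' $T := \{j \in Supp(\LL) : p_{0,j}(\lambda) \in \C p_{0,k}(\lambda)\}$ with Lemma~\ref{l4.3} and a Jacobi-identity argument. First I would collect the easy inclusions $0, k, -k \in T$: the last via~\eqref{eq4-1} applied to $[{L_k}_\lambda L_{-k}]$, which is nonzero since $k \in Supp_1(\LL)$. Assuming for contradiction that $B := Supp(\LL) \setminus T$ is nonempty, I would verify the hypothesis of Lemma~\ref{l4.1} with $S = T$: setting $\mathcal{G} := \bigoplus_{i\in T}\LL_i$ and $\mathcal{B} := \bigoplus_{j\in B}\LL_j$, whenever $i \in T$, $j \in B$ and $[{L_i}_\lambda L_j] \neq 0$, equation~\eqref{eq4-1} forces $p_{0,i+j} = p_{0,i}+p_{0,j} \notin \C p_{0,k}$, so $i+j \in B$. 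Simplicity of $\LL$ then yields $\mathcal{G} \subset [\mathcal{B}_\lambda \mathcal{B}]$.

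Specializing to $L_k \in \mathcal{G}$, I extract $a,b \in B$ with $a+b = k$ and $p_{a,b}(\partial,\lambda) \neq 0$. Since $a,b \in B$ forces $p_{0,a}, p_{0,b} \neq 0$, and $p_{0,k} \neq 0$ by assumption, Lemma~\ref{l4.3}(3) gives $\deg p_{a,b} \leq 1$. The degree-$1$ case handles itself: Lemma~\ref{l4.3}(3) then forces $p_{0,b} = c\, p_{0,a}$ with $c \neq 0,-1$, whence $p_{0,a} = \frac{1}{1+c}\, p_{0,k} \in \C p_{0,k}$, contradicting $a \in B$.

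The hard part will be the degree-$0$ case, where $p_{a,b}$ is a nonzero scalar. To rule it out I plan to feed the Jacobi identity $[{L_a}_\lambda [{L_k}_\mu L_{-k}]] = [[{L_a}_\lambda L_k]_{\lambda+\mu} L_{-k}] + [{L_k}_\mu [{L_a}_\lambda L_{-k}]]$ with the output of Lemma~\ref{l4.3}(2): applied to $[L_k, L_{-k}]$ (with $p_{0,0}=0$ and $p_{0,-k} \neq 0$), it gives $p_{k,-k}(\partial,\lambda) = g(\partial) \in \C[\partial]$, so the left-hand side evaluates to $-g(\partial+\lambda)\, p_{0,a}(-\partial-\lambda) L_a$, a polynomial in $\sigma := \partial+\lambda$ alone. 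Rerunning the degree analysis from the previous paragraph on each auxiliary bracket $p_{a,k}, p_{a+k,-k}, p_{a,-k}, p_{k,-b}$ shows that each is either zero or a nonzero constant (a degree-$1$ occurrence would again pin down $p_{0,a} \in \C p_{0,k}$), so the right-hand side collapses to a scalar. Comparing sides, $g(\sigma)\, p_{0,a}(-\sigma) \in \C$ as a polynomial in $\sigma$; the zero case is impossible since $g, p_{0,a} \neq 0$, and the nonzero case forces both $g$ and $p_{0,a}$ to be nonzero scalars. The same Jacobi argument with $b$ in place of $a$ gives $p_{0,b}$ scalar, so $p_{0,k} = p_{0,a}+p_{0,b}$ is a nonzero constant and $\C p_{0,k} = \C$, which contains the scalar $p_{0,a}$, contradicting $a \in B$.
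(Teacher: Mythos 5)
Your proposal is correct, and while it reproduces the paper's central computation, it closes the argument by a genuinely different mechanism. The shared core is the middle step: for a ``bad'' index the brackets with $L_{\pm k}$ are forced into $\C$ by Lemma~\ref{l4.3}, and the Jacobi identity with $L_k$, $L_{-k}$ (whose mutual bracket lies in $\C[\p]$ by Proposition~\ref{p4.6}) then forces the corresponding zeroth structure polynomial to be a nonzero constant --- this is exactly the paper's deduction that $p_{0,j}\in\C^*$. The difference is where simplicity enters. The paper fixes a single bad index $j$, gets $p_{0,j}\in\C^*$, and finishes with a second simplicity argument: the set $S=\{s\mid \deg p_{0,s}=\deg p_{0,k}\}$ is claimed, via \eqref{eq4-1}, to span an ideal, so $\LL=\bigoplus_{i\in S}\LL_i$ and $j\notin S$ gives the contradiction. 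You invoke simplicity up front, applying Lemma~\ref{l4.1} to $T=\{j\mid p_{0,j}\in\C p_{0,k}\}$ to get $\LL_k\subset[\mathcal{B}_{(\lambda)}\mathcal{B}]$ and hence $a,b\in B$ with $a+b=k$ and $p_{a,b}\neq 0$; the degree-one case dies immediately by Lemma~\ref{l4.3}(3), and in the constant case the Jacobi analysis, run for $a$ and for $b$, makes $p_{0,k}=p_{0,a}+p_{0,b}$ itself a nonzero constant, contradicting $a\in B$ directly. What your route buys is that you never need the paper's closure claim for the degree-equality set (which requires ruling out cancellation of leading terms when adding $p_{0,m}+p_{0,s}$); the cost is the extra dichotomy on $\deg p_{a,b}$ and running the constancy analysis twice. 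When you write it up, make explicit the one hidden check in the ``auxiliary brackets are constants'' step: for $p_{a+k,-k}$ and $p_{k,a-k}$ you must first note $p_{0,a+k}\neq 0$ and $p_{0,a-k}\neq 0$ (vanishing would force $p_{0,a}=\mp p_{0,k}\in\C p_{0,k}$ by \eqref{eq4-1}), so that part (3) rather than part (2) of Lemma~\ref{l4.3} applies; with that sentence added, every degree-one occurrence does pin $p_{0,a}$ into $\C p_{0,k}$ as you assert, and the sketch fills in without obstruction.
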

\begin{proof}	Suppose that $p_{0,j}(\M)$ is not associated with  $p_{0,k}(\M)$ for some $j\in Supp(\LL)$. We may assume that $p_{0,j}(\M)\neq 0$. Then, according to Equation \eqref{l4.1}, $p_{0,-k+j}(\M)$ is not zero and is  not associated with $p_{0,-k}(\M)$. Hence $p_{-k,j+k}(\partial,\M)$, $p_{j,k}(\partial,\M)\in \C$. Similarly, $p_{k,-k+j}(\partial,\M)$,$p_{j,-k}(\partial,\M)\in \C$.  By Jacobi-identity of $L_{k}$,$L_{-k}$ and $L_{j}$, we have $p_{0,j}(\M)\in \C^*$. 
Let  $S=\{s\in Supp(\LL) |deg p_{0,s}(\M)=deg p_{0,k}(\M)\}$ and  $\mathcal{S}=\bigoplus_{i\in S}\LL_i$. Then by Equation (\ref{eq4-1}),  $\mathcal{S}$ is an ideal of $\LL$. Thus  $\LL=\mathcal{S}$.	
\end{proof}
	
	\begin{pro}{\label{p4.6}}  For $k \in Supp_1(\mathcal{L})$, then
		\begin{enumerate}[fullwidth,itemindent=0em,label=(\arabic*)]
			\item $-k\in Supp_1(\mathcal{L})$.
			\item  $p_{-k,k}(\p,\M)\in \mathbb{C}[\partial]$.
			\item  $deg p_{-k,k}(\p)+deg p_{0,k}(\M)\leq 2.$ 
		\end{enumerate}
	\end{pro}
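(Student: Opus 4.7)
The plan is to check the three parts in order, using skew-symmetry, Equation~\eqref{eq4-1}, Lemma~\ref{l4.3}, Corollary~\ref{c4.5}, and a pair of Jacobi identities.

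Part (1) is bookkeeping: $p_{-k,k}\neq 0$ forces $\LL_{-k}\neq 0$, skew-symmetry gives $p_{k,-k}\neq 0$, and applying Equation~\eqref{eq4-1} to the pair $(k,-k)$ together with $p_{0,0}=0$ (since $\LL_0$ is abelian) yields $p_{0,-k}=-p_{0,k}\neq 0$.

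Part (2) follows from Lemma~\ref{l4.3}(2) applied to $(k,s)=(k,-k)$: one has $p_{0,k+s}=p_{0,0}=0$ and $p_{0,s}=p_{0,-k}\neq 0$ by Part (1), so $p_{k,-k}(\partial,\lambda)\in\mathbb{C}[\partial]$, and skew-symmetry passes this to $p_{-k,k}$.

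For Part (3), set $q(\partial):=p_{-k,k}(\partial)$, $r(\lambda):=p_{0,k}(\lambda)$, and $P(x):=q(x)r(-x)$. The key identity I would extract from the Jacobi identity of $(L_k,L_k,L_{-k})$, after using $\lambda$-sesquilinearity, is
\[
P(\partial+\lambda)-P(\partial+\mu)=p_{k,k}(-(\lambda+\mu),\lambda)\,p_{2k,-k}(\partial,\lambda+\mu).
\]
If $p_{k,k}=0$ (in particular when $2k\notin \mathrm{Supp}(\LL)$) the right-hand side vanishes, so $P$ is constant and $\deg q+\deg r=0$. Otherwise I apply the Jacobi identity of $(L_0,L_k,L_k)$: combining Corollary~\ref{c4.5} with the specialisation $\lambda=0$ forces $c_{2k}=2$, and then the specialisation $\mu=0$ together with $(\partial+2\lambda)\mid p_{k,k}$ pins down $p_{k,k}(\partial,\lambda)=b_1(\partial+2\lambda)$ for some $b_1\in\mathbb{C}^{*}$. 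Substituting gives
\[
P(\partial+\lambda)-P(\partial+\mu)=b_1(\lambda-\mu)\,p_{2k,-k}(\partial,\lambda+\mu).
\]
Differentiating both sides in $\lambda$ and in $\mu$ and subtracting the results removes the factor $\lambda-\mu$, producing $P'(\partial+\lambda)+P'(\partial+\mu)=2b_1\,p_{2k,-k}(\partial,\lambda+\mu)$; since the right-hand side depends only on $\partial$ and $\lambda+\mu$, differentiating this identity once in $\lambda$ and once in $\mu$ yields $P''(\partial+\lambda)=P''(\partial+\mu)$, so $P''$ is constant and $\deg P\leq 2$, i.e.\ $\deg q+\deg r\leq 2$.

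The main obstacle is Part (3): teasing out the exact shape $p_{k,k}(\partial,\lambda)=b_1(\partial+2\lambda)$ from the $(L_0,L_k,L_k)$ Jacobi identity (combining $c_{2k}=2$ with the divisibility $(\partial+2\lambda)\mid p_{k,k}$) and then extracting the degree bound for $P$ via the differentiation trick above rather than by naive degree counting, which would only give a much weaker inequality.
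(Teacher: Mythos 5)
Your parts (1) and (2) coincide with the paper's treatment (the paper simply says they follow from Lemma~\ref{l4.3}): Equation~\eqref{eq4-1} with $s=-k$ and $p_{0,0}=0$ gives $p_{0,-k}=-p_{0,k}\neq 0$, and Lemma~\ref{l4.3}(2) together with skew-symmetry gives $p_{-k,k}\in\C[\p]$. For part (3) your argument is correct but finishes by a genuinely different route. Both proofs start from the same Jacobi identity of $(L_k,L_k,L_{-k})$: the paper's Equation~\eqref{eq4-11} is your identity with the roles of $\lambda$ and $\mu$ rearranged. The paper then compares only the leading homogeneous parts, arriving at \eqref{eq4-12}, and rules out $n\geq 3$ because a linear factor $\lambda+\mu\pm\upsilon_i(\partial+\mu)$, with $\upsilon_i$ a primitive $n$-th root of unity, cannot divide the right-hand side; it never needs more than the divisibility $(\partial+2\mu)\mid p_{k,k}(\partial,\mu)$. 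You instead pin down $p_{k,k}$ exactly: when $p_{k,k}\neq 0$, Equation~\eqref{eq4-1} gives $p_{0,2k}=2p_{0,k}\neq 0$, so Lemma~\ref{l4.3}(3) combined with $(\partial+2\lambda)\mid p_{k,k}$ forces $p_{k,k}=b_1(\partial+2\lambda)$, and then your differentiation computation (first $P'(\partial+\lambda)+P'(\partial+\mu)=2b_1\,p_{2k,-k}(\partial,\lambda+\mu)$, then $P''(\partial+\lambda)=P''(\partial+\mu)$, so $P''$ is constant) yields $\deg P\leq 2$; the case $p_{k,k}=0$ makes $P$ constant and is trivial. Your version is a bit longer but replaces the paper's terse leading-form factorization step by a mechanical and fully checkable argument, while the paper's version is shorter and never needs the exact shape of $p_{k,k}$. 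One small caution: you invoke Corollary~\ref{c4.5} to get $c_{2k}=2$, but that corollary assumes $\LL$ simple graded, which is not a hypothesis of Proposition~\ref{p4.6} and is not available when the proposition is later applied to algebras of class $\mathcal{V}$; this is harmless, since $p_{0,2k}=2p_{0,k}$ already follows from Equation~\eqref{eq4-1} whenever $p_{k,k}\neq 0$, so you should cite that instead.
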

	\begin{proof}
		(1)(2) can be obtained from Lemma {\ref{l4.3}} immeadietely. As for (3), let us consider the Jacobi-Identity of $L_k, L_k,L_{-k}$. We have
		\begin{equation}{\label{eq4-11}}
			\begin{split}
				p_{-k,k}(-\M-\mu)p_{0,k}(\M+\mu)=&p_{k,k}(\p+\M,\mu)p_{-k,2k}(\p,\M)\\
				&+p_{-k,k}(\p+\mu)p_{0,k}(-\p-\mu).
			\end{split}
		\end{equation}
		Comparing the terms of  highest degree  on both sides of Equation \eqref{eq4-11}, there exists $g(\p,\M)$,$h(\p,\M)\in \C[\p,\M]$ such that
		\begin{equation}{\label{eq4-12}}
			(\lambda+\mu)^n\pm (\p+\mu)^n=(\p+\M+2\mu)g(\p+\M,\mu)h(\p,\M).
		\end{equation} 
		If $n\geq 3$, then the left side of Equation \eqref{eq4-12} has a factor $(\lambda+\mu\pm\upsilon_i(\partial+\mu))$ where $\upsilon_i$ is a primitive $n$-th unit root. However, this factor can not occur in the right-hand sides of Equation \eqref{eq4-12}. Thus 
		\[deg p_{-k,k}(\p)+deg p_{0,k}(\M)=n\leq 2.\]
	\end{proof}
\begin{dfn} We say that $\mathcal{L}$ is  non-integral. If there exists $a\neq 0,b\in \Z$ and $ f(\lambda)\neq 0$, $g(\lambda)\in \C[\M]$ and $N\in \Z^+$ such that for  
	     \[ p_{0,an+b}(\M)=f(\M)n+g(\M),   \text{ for each}\  n\geq N.  \]
	     Otherwise we say  $\mathcal{L}$ is integral.
	     \end{dfn}
	     Suppose that $\LL=Cur\LL^0$. Then $\LL$ is integral if and only if $\LL^0$ is integral. In addition, one can see that \[M(1),\  M(2),\  ECL_2(s),\  SCL_2(0,s),\ CL_2(0,s), \]
	      are non-integral.

\subsection{Lie Conformal Algebras of Class $\mathcal{V}$}

     We say a   $\Z$-graded Lie conformal algebra $\mathcal{L}=\bigoplus_{i\in \Z}\mathcal{L}_i$ is of class $\mathcal{V}$ if
\begin{enumerate}[fullwidth,itemindent=0em,label=(\arabic*)]
	\item[$({\bf C1})$]$rank \LL_i\leq 1$ for each $i \in \Z$. $rank \LL_1=rank \LL_{-1}=rank \LL_0=1$. 
	\item[$({\bf C2})$]$\LL$ is generated by $\LL_1$,$\LL_0$ and $\LL_{-1}$. 
	\item[$({\bf C3})$]  $[{\LL_{-1}}_\lambda \LL_1]\neq 0,\   [{\LL_{0}}_\lambda \LL_1]\neq 0,  [{\LL_0}_\lambda \LL_{0}]=0.$ 
\end{enumerate}
By Proposition {\ref{p4.6}}, we can see that 
$deg p_{-1,1}(\partial,\lambda)+deg p_{0,1}(\partial,\lambda)\leq 2$. Consequently, we are going to classify the isomorphism classes of Lie conformal algebras within the class $\mathcal{V}$ in accordance with the diverse values of  
 $deg p_{-1,1}(\partial,\lambda)$ and  $deg p_{0,1}(\partial,\lambda)$.

\begin{lem}\label{l4.9}
	Suppose that $degp_{-1,1}(\partial,\lambda)=degp_{0,1}(\partial,\lambda)=1$. Then $Supp(\mathcal{L})=\Z$. Moreover, by suitable choice of $\C[\partial]$-basis $\{L_i\}$,  for each $m,n\in \Z$,  we have \begin{equation}{\label{eq4-13}} p^1_{m,n}(\partial,\lambda)=m\partial+(m+n)\lambda, 
	\end{equation}
\end{lem}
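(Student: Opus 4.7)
The plan is to exploit Lemma \ref{l4.3} systematically to determine the form of every structure polynomial $p_{m,n}$, then absorb residual scalars by rescaling. Since $({\bf C3})$ forces $p_{0,0}=0$ while $p_{0,1}(\lambda)\ne 0$, Lemma \ref{l4.3}(2) gives $p_{-1,1}(\partial,\lambda)\in\mathbb{C}[\partial]$. The degree-one hypotheses then let me write $p_{-1,1}(\partial)=\alpha\partial+\beta$ with $\alpha\ne 0$ and $p_{0,1}(\lambda)=a\lambda+b$ with $a\ne 0$. A short Jacobi on $L_0,L_1,L_{-1}$—whose $[{L_0}_\lambda L_0]$-term on the left vanishes—yields $p_{0,-1}(\lambda)=-p_{0,1}(\lambda)$.

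Next I prove $Supp(\mathcal{L})=\mathbb{Z}$ together with $p_{0,n}(\lambda)=n(a\lambda+b)$ by induction on $|n|$. Suppose $n\ge 1$ lies in $Supp(\mathcal{L})$ with $p_{0,n}\ne 0$; it suffices to show $[{L_1}_\lambda L_n]\ne 0$. If instead $[{L_1}_\mu L_n]=0$, the Jacobi identity on $L_{-1},L_1,L_n$ has vanishing left-hand side, so its right-hand side
\[
(-\alpha(\lambda+\mu)+\beta)\,n(a(\lambda+\mu)+b)\,L_n + p_{-1,n}(\partial+\mu,\lambda)\,p_{1,n-1}(\partial,\mu)\,L_n
\]
must vanish. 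Since Lemma \ref{l4.3}(2)--(3) forces $\deg_\lambda p_{-1,n}\le 1$, the coefficient of $\lambda^2$ in this identity is just $-na\alpha$, contradicting $n,a,\alpha\ne 0$. The negative direction is symmetric, and Corollary \ref{c4.5} combined with Lemma \ref{l4.3}(1) then delivers $p_{0,n}=n\,p_{0,1}$ for every $n\in\mathbb{Z}$.

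For any $m,n\in\mathbb{Z}$ with $m,n,m+n\ne 0$, Lemma \ref{l4.3}(3) now applies with $c=n/m$, giving $\deg p_{m,n}=1$ and
\[
p^1_{m,n}(\partial,\lambda)=\frac{c_{m,n}}{m}\bigl(m\partial+(m+n)\lambda\bigr),\qquad c_{m,n}\in\mathbb{C}^*.
\]
For $n=-m$, Lemma \ref{l4.3}(2) yields $p_{m,-m}(\partial,\lambda)\in\mathbb{C}[\partial]$; a Jacobi identity on $L_{-1},L_{m+1},L_{-m}$ then relates $p_{m,-m}$ linearly to $p_{-1,1}$ and shows its linear part is proportional to $m\partial$.

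Finally, rescale $L_0\mapsto a^{-1}L_0$ to set $a=1$; a further rescaling $\widetilde L_n=\xi_n L_n$ (with $\xi_0=1$) transforms the constants as $\widetilde c_{m,n}=(\xi_m\xi_n/\xi_{m+n})\,c_{m,n}$. Choose $\xi_1,\xi_{-1}$ so that $\widetilde c_{-1,1}=-1$ (equivalently $\alpha=-1$), and then inductively choose $\xi_n$ for $|n|\ge 2$ via the recursion $\xi_m\xi_1/\xi_{m+1}=m/c_{m,1}$. The main obstacle is checking consistency: using the cocycle relations among the $c_{m,n}$ obtained from Jacobi applied to triples $L_1,L_m,L_n$, together with the skew-symmetry identity $c_{n,m}=(n/m)c_{m,n}$, one must verify that this row-by-row choice simultaneously normalizes $\widetilde c_{m,n}=m$ for every pair and is compatible with the $p_{m,-m}$ of the previous step. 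Once this cocycle check is carried out, the conclusion $p^1_{m,n}(\partial,\lambda)=m\partial+(m+n)\lambda$ for all $m,n\in\mathbb{Z}$ follows at once.
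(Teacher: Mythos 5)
Your opening steps (getting $p_{-1,1}\in\mathbb{C}[\partial]$, $p_{0,-1}=-p_{0,1}$, and the $\lambda^2$-coefficient argument forcing $p_{1,n}\neq 0$, hence $Supp(\mathcal{L})=\mathbb{Z}$ and $p_{0,n}=np_{0,1}$) are sound and are a reasonable variant of what the paper does. But from that point on there are genuine gaps. First, for general $m,n$ with $m,n,m+n\neq 0$ you invoke Lemma \ref{l4.3}(3) to conclude $\deg p_{m,n}=1$ and $p^1_{m,n}=\frac{c_{m,n}}{m}(m\partial+(m+n)\lambda)$ with $c_{m,n}\in\mathbb{C}^*$. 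This presupposes $p_{m,n}\neq 0$, which you have only proved for the rows $m=\pm 1$; and even granting nonvanishing, the direction of the ``if and only if'' you are using is not what Lemma \ref{l4.3}'s proof actually supports: that proof assumes $\deg p_{k,s}>0$ and derives the proportionality and the linear form, so a nonzero \emph{constant} $p_{m,n}$ is not excluded by it (this is exactly the type-I/current-algebra possibility, e.g.\ $Cur\mathcal{W}$, where all the proportionality relations among the $p_{0,k}$ hold while $\deg p_{k,s}=0$). Ruling that out, and proving nonvanishing, requires an explicit Jacobi computation, which is what the paper's induction supplies: the identity for $L_{-1},L_1,L_k$ forces the factorization $p^1_{1,k}(\partial+\lambda,\mu)p^1_{-1,k+1}(\partial,\lambda)=(\partial+\lambda+(k+1)\mu)(-\partial+k\lambda)$, which simultaneously shows $p_{1,k}\neq 0$, pins down both degree-one parts, and forces $c_{1,k}c_{-1,k+1}=1$.

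Second, you yourself flag the decisive issue — that the row-by-row rescaling $\xi_m\xi_1/\xi_{m+1}=m/c_{m,1}$ must be shown compatible with \emph{all} the constants $c_{m,n}$ and with $p_{m,-m}$ — and then defer it (``once this cocycle check is carried out\dots''). That check is not a routine cocycle triviality; it is the substantive half of the lemma, and the paper handles it constructively: after normalizing $c_{1,k}=1$ by the choice of $L_{k+1}$ (one new basis vector per step, so no global consistency problem arises), the relation $c_{1,k}c_{-1,k+1}=1$ fixes the $(-1)$-row for free, and a second induction on $m$ via the Jacobi identity of $L_1,L_m,L_n$ then determines $p^1_{m,n}=m\partial+(m+n)\lambda$ with no residual scalars left to normalize. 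As written, your proposal asserts rather than proves the conclusion for $|m|\geq 2$, so the lemma is not yet established.
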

\begin{proof}By exchanging the roles of negative gradation and positive gradation, it suffices to consider the case of  $p_{m,n}(\p,\M)$ for $n\geq 0$. 
	According to Proposition \ref{p4.6}, through an appropriate selection of $L_0$, we can assume that $p^1_{0,1}(\partial,\lambda)=\M$. Similarly, by making a proper choice of $L_{-1}$, we may assume that $p^1_{0,-1}(\partial,\lambda)=\M$. Let us take inductions. 
  Assume that $p^1_{1,k-1}(\p,\M)$ and $p^1_{-1,k}(\p,\M)$ has the form  {\eqref{eq4-13}}  when $n=k>0$.
Consider the Jacobi-identity of $L_{-1}$, $L_1$, $L_k$, we have 
  \begin{equation}\label{eq4-15}
  	\begin{split}
  		k(\lambda+\mu)(\lambda+\mu)=&p^1_{1,k}(\partial+\lambda,\mu)p^1_{-1,k+1}(\partial,\lambda)\\
  		&-(-\partial-\mu+(k-1)\lambda)(\partial+k\mu),
  	\end{split}
  \end{equation}
 Hence 
  	\[p^1_{1,k}(\partial+\lambda,\mu)p^1_{-1,k+1}(\partial,\lambda)=(\partial+\lambda+(k+1)\mu)(-\partial+k\lambda).\]
 It implies that 
  \[ p^1_{1,k}(\partial,\lambda)=c_{1,k}(\partial+(k+1)\lambda), \]
  \[ p^1_{-1,k+1}(\partial,\lambda)=c_{-1,k+1}(-\partial+k\lambda), \]
 for some $c_{1,k}$, $c_{-1,k+1}\in \C$ and $c_{1,k}c_{-1,k+1}=1$. By suitable choice of $L_{k+1}$, we may assume that  $c_{1,k}=1$. Hence $c_{-1,k+1}=1$.
 In particular, one can see that $p_{1,n}(\p,\M)\neq 0$ for each $n\geq 0$. Similarly, $p_{-1,-n}(\p,\M)\neq 0$ for each $n\geq 0$. Hence  $Supp(\mathcal{L})=\Z$ and by Lemma \ref{l4.3}, $p_{0,n}(\M)=np_{0,1}(\M)=n\M$ for each $n\geq 0$. 
  
  As for the case $m,n\geq 0$, by using the induction on $m$ for  $p^1_{m,n}(\p,\M)$, we can obtain the form \eqref{eq4-13}  from the Jacobi-Identity of $L_1$,$L_m$ and $L_n$.  
  
In conclusion, for any $ m,n\in \mathbb{Z}$, we have
  \[ p^1_{m,n}(\partial,\lambda)=m\partial+(m+n)\lambda. \]
\end{proof}

\begin{pro}\label{p4.10}
	Suppose that $degp_{-1,1}(\partial,\lambda)=1$, $degp_{0,1}(\partial,\lambda)=1$. Then $\mathcal{L}\cong CL_2(0,s) $ or $CL_3(s)$.
\end{pro}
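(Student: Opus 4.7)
The plan is to combine Lemma \ref{l4.9} with the Jacobi identity to express every structure polynomial in two scalar parameters $s, t$, and then extract a quadratic constraint that isolates the two isomorphism classes.

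By Lemma \ref{l4.9} we have $Supp(\LL) = \Z$ and $p^1_{m,n}(\p, \M) = m\p + (m+n)\M$. Setting $t := a_{0,1}$, Lemma \ref{l4.3}(1) applied with $k = 0$ (or direct additivity from Equation (4.1)) gives $p_{0, n}(\M) = n\M + nt$, which has degree one for every $n \neq 0$. Applying Lemma \ref{l4.3}(3) when $m, n, m+n \neq 0$ and Proposition \ref{p4.6}(3) when $m + n = 0$ forces $\deg p_{m, n}(\p, \M) \leq 1$ for all pairs. Hence
\[ p_{m,n}(\p, \M) = m\p + (m+n)\M + a_{m,n}, \qquad a_{m,n} \in \C, \]
with $a_{m,n} = -a_{n,m}$ (so $a_{n,n} = 0$) from skew-symmetry.

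Next, comparing the coefficient of $\p$ on the two sides of the Jacobi identity $[{L_i}_\M [{L_j}_\mu L_k]] = [[{L_i}_\M L_j]_{\M + \mu} L_k] + [{L_j}_\mu [{L_i}_\M L_k]]$ yields the scalar relation
\[ i\, a_{j,k} + j\, a_{i,j+k} = (i+j)\, a_{i,j} + j\, a_{i,k} + i\, a_{j, i+k}, \]
which I call $(\star)$. Taking $i = 0$ in $(\star)$ recovers $a_{0, n} = nt$. Writing $g(n) := a_{1, n}$, the instance $(i, j, k) = (1, n, 1)$ of $(\star)$ yields $a_{n, 2} = n g(n+1) - (n+2) g(n)$, while the instance $(1, 2, n)$ yields $a_{2, n+1} - a_{2, n} = -3 g(2) + 2 g(n+2) - 2 g(n)$. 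Equating these two expressions for $a_{2, n+1} - a_{2, n}$ (via $a_{2, n} = -a_{n, 2}$) produces the recurrence
\[ n g(n) - (2n+3) g(n+1) + (n+3) g(n+2) = 3 g(2), \]
which together with $g(1) = 0$ forces $g(n) = s(1-n)$ for every $n \neq -1$ (where $s := -g(2)$), while evaluating the recurrence at $n = -1$ gives $g(-1) = 3s + t$. Finally, $(\star)$ with $j = 1$ recursively determines every $a_{m, n}$ in terms of the two scalars $s, t$.

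The remaining scalar constraint comes from the constant term (coefficient of $\p^0 \M^0 \mu^0$) of the Jacobi identity, which is precisely the Jacobi identity of the basic Lie algebra $\LL^0$:
\[ a_{j, k}\, a_{i, j+k} = a_{i, j}\, a_{i+j, k} + a_{i, k}\, a_{j, i+k}. \]
Substituting the explicit values computed above at $(i, j, k) = (2, -2, 1)$ yields $(-3s)(3s) = (6s + 2t)\, t + s \cdot (-5s)$, which simplifies to $(t + s)(t + 2s) = 0$. When $t = -s$ we obtain $a_{m, n} = s(m - n)$ throughout, so $\LL \cong CL_2(0, s)$; when $t = -2s$ the deviations at $m + n = 0$, $m = 0$, and $n = 0$ exactly match the structure polynomials of $CL_3(s)$, giving $\LL \cong CL_3(s)$. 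The main obstacle is this final step: once $g$ is fixed, most instances of the basic Lie algebra Jacobi are automatic consequences of $(\star)$, and one must carefully select a triple with $i + j = 0$ (straddling the anomalous region where $t$ enters $a_{m, n}$) in order to extract a genuinely new scalar relation.
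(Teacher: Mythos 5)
Your proposal is correct and follows essentially the same route as the paper's proof: after the normalization of Lemma \ref{l4.9} you write $p_{m,n}(\partial,\lambda)=m\partial+(m+n)\lambda+a_{m,n}$, determine all the constants from linear (coefficient-of-$\partial$) consequences of the Jacobi identity in terms of two parameters, and then a single quadratic constant-term relation forces the dichotomy $CL_2(0,s)$ versus $CL_3(s)$, which is exactly the scheme of Equations \eqref{eq4-21}--\eqref{eq4-23} and \eqref{eq4-30} in the paper, only with different bookkeeping (your universal relation $(\star)$ and the single instance $(2,-2,1)$ in place of the paper's $(-1,1,n)$ family). The only slip is the claim that $g(n)=s(1-n)$ for all $n\neq -1$: the recurrence fixes $g(n)$ for $n\geq 1$ and $n\leq -2$ but only the sum $g(-1)+g(0)=3s$, while $g(0)=a_{1,0}=-t$; since your later substitutions (e.g.\ $a_{2,-2}=6s+2t$) already use these correct values, the computation and the conclusion $(t+s)(t+2s)=0$ stand.
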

\begin{proof}
	\rm  By Lemma \ref{l4.9}, we may assume that for any $m$,$n\in \Z$.
	\[p_{m,n}(\partial,\lambda)=m\partial+(m+n)\lambda+d_{m,n},\]
where $d_{m,n}\in \mathbb{C}$.
	\par\rm  Let us determine the value of $d_{m,n}$. Firstly, from Lemma \ref{l4.3}, we have $d_{0,n}=nd_{0,1}$ for each $n\in \mathbb{Z}$. Next, we shall determine the values of  $d_{-1,n}$ and $d_{1,n}$ for each $n\in \Z$.
	For each $n\in \Z$, consider the Jacobi-identity of $L_{-1}$, $L_{1}$ and $L_{n}$, we have  
	\begin{equation}\label{eq4-20}
		\begin{split}
			&(\partial+\lambda+(n+1)\mu+d_{1,n})(-\partial+n\lambda+d_{-1,n+1})\\
			=&(\lambda+\mu+d_{-1,1})(n(\lambda+\mu)+d_{0,n})\\
			&+(-\partial+(n-1)\lambda-\mu+d_{-1,n})(\partial+n\mu+d_{1,n-1}). 	
		\end{split} 
	\end{equation}
	Comparing the coefficients before $\partial$, $\lambda$ and constant terms in Equation \eqref{eq4-20}, we can find that 
	\begin{align}
		&d_{-1,n+1}-d_{-1,n}=d_{1,n}-d_{1,n-1}, \label{eq4-21}\\
		&d_{-1,n+1}+nd_{1,n}=d_{0,n}+nd_{-1,1}+(n-1)d_{1,n-1}, \label{eq4-22}\\
		&d_{1,n}d_{-1,n+1}=d_{-1,1}d_{0,n}+d_{-1,n}d_{1,n-1}. \label{eq4-23}
	\end{align}
	From Equation \eqref{eq4-21} and Equation \eqref{eq4-22}, for each $n\in \Z$, we have
	\begin{equation}\label{eq4-24}
		d_{1,n}=nd_{0,1}+nd_{-1,1}-nd_{-1,n+1}+(n-1)d_{-1,n}.
	\end{equation}

	From Equation \eqref{eq4-24} and Equation \eqref{eq4-21}, we have
	\begin{equation}\label{4-25}
		(2n-1)d_{-1,n}+d_{0,1}+d_{-1,1}=(n-2)d_{-1,n-1}+(n+1)d_{-1,n+1}.
	\end{equation}

	Using inductions, It is not hard to see that for any  $n>1$ or $n\leq -1$,
	\begin{equation}\label{eq4-26}
		d_{-1,n}=\frac{n+1}{3}(d_{0,1}+d_{-1,1}),
	\end{equation}
Plugging Equation \eqref{eq4-26} into Equation \eqref{eq4-24}, for each $n\geq 1$ or $n< -1$, we have 
	\begin{equation}\label{4-27}
		d_{1,n}=\frac{n-1}{3}(d_{0,1}+d_{-1,1}).
	\end{equation}
	\par Let us consider $d_{m,n}$ for general $m,n\in\mathbb{Z}$. Consider the Jacobi-identity of $L_1$, $L_m$, $L_n$
	\begin{equation}\label{eq4-28}
		\begin{split}
			&(m\partial+n\lambda+(m+n)\mu+d_{m,n})(\partial+(m+n+1)\lambda+d_{1,m+n})\\
			=&(m\lambda-\mu+d_{1,m})((m+1)\partial+(m+n+1)(\lambda+\mu)+d_{m+1,n})\\
			&+(\partial+(n+1)\lambda+\mu+d_{1,n})(m\partial+(m+n+1)\mu+d_{m,n+1}).
		\end{split}
	\end{equation}
	The coefficients before Equation \eqref{eq4-28} tell us that 
	\begin{equation}\label{eq4-29}
		d_{m,n+1}-d_{m,n}=md_{1,m+n}-(m+1)d_{1,m}-md_{1,n}.
	\end{equation}
	First, we deal with the case that $m+n\neq 0$. We may assume that $|m|>|n|$. If $n>0(n<0)$, then $m+n-1(m+n+1)$, $m$, $n$ are both either greater than 1 or smaller than $-1$. Similarly, if $n<0$, then $m+n+1$, $m$, $n$ are both either greater than 1 or smaller than $-1$.   By \eqref{eq4-29} and \eqref{4-27},  it is easy to prove  by using induction that  
			\begin{equation}\label{eq4-30}
			d_{m,n}=\frac{n-m}{3}(d_{0,1}+d_{-1,1})
			\end{equation}
			for each $m+n\neq 0$ and $mn\neq 0$.
If $m+n=0$, it is immeadiete from \eqref{eq4-29} and \eqref{eq4-30} that $d_{m,-m}=md_{1,-1}$ for each $m\in \Z$.
		
	Now plugging Equation \eqref{eq4-26} and  Equation \eqref{4-27} into Equation \eqref{eq4-23}, we have
	\[ \frac{n}{9}(d_{0,1}-2d_{-1,1})(2d_{0,1}-d_{-1,1})=0, \]
	Thus $d_{-1,1}=2d_{0,1}$ or $d_{0,1}=2d_{-1,1}$.
	\begin{enumerate}[fullwidth,itemindent=0em,label=(\roman*)]
		\item
		If $d_{-1,1}=2d_{0,1}$, then $d_{m,n}=(n-m)d_{0,1}$ for each $ m,n\in \mathbb{Z}$. In this case,
		\[[{L'_m}_\lambda {L'_n}]=(m\partial+(m+n)\lambda+(n-m)d_{0,1})L'_{m+n},\]
		for each $m,n\in\mathbb{Z}$. Thus $\mathcal{L}\cong CL_2(0,-d_{0,1})$.
		\item
		If $d_{0,1}=2d_{-1,1}$, then we have
		\begin{equation*}
			\begin{split}
				d_{m,n}=\left \{
				\begin{array}{ll}
					-md_{-1,1},&m+n=0,\\
					2nd_{-1,1},&m=0,\\
					-2md_{-1,1},&n=0,\\
					(n-m)d_{-1,1},&\text{else}.
				\end{array}
				\right. 
			\end{split}
		\end{equation*}
		One can check directly that $\mathcal{L}\cong CL_3(-d_{-1,1})$.
	\end{enumerate}
\end{proof}

\begin{cor}\label{c4.11}
	If $degp_{-1,1}(\partial,\lambda)=2$, then $\mathcal{L}\cong ECL(s)$.
\end{cor}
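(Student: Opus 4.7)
The plan is to mirror the strategy of Proposition \ref{p4.10}, but now in the degenerate regime forced by $\deg p_{-1,1}(\p)=2$. First, by Proposition \ref{p4.6}(3), the inequality $\deg p_{-1,1}(\p)+\deg p_{0,1}(\M)\leq 2$ forces $\deg p_{0,1}(\M)=0$, so $p_{0,1}(\M)=\alpha\in \C^*$. After rescaling $L_0$ we may assume $p_{0,1}(\M)=-1$, and hence $p_{0,-1}(\M)=1$ by skew-symmetry. Proposition \ref{p4.6}(2) gives $p_{-1,1}(\p,\M)\in \C[\p]$, so I would write $p_{-1,1}(\p)=-(\p+a)(\p+b)$ for some $a,b\in \C$ (absorbing the leading coefficient into $L_{-1}$).

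Next, I would determine the support and the generic form of $p_{m,n}$. By Corollary \ref{c4.5}, $p_{0,j}(\M)=-c_j\in\C$ for every $j\in Supp(\LL)$ with $c_1=1$. Applying Lemma \ref{l4.3}(3) inductively to the triples $(1,n)$, $(-1,n)$ (following the pattern of Lemma \ref{l4.9}, but with constant functions $p_{0,j}$ in place of linear ones), I expect to obtain $Supp(\LL)=\Z$, $c_j=j$, and, after a suitable rescaling of each $L_n$,
\[
p_{m,n}(\p,\M)=m\p+(m+n)\M+d_{m,n}
\]
for $m,n\neq 0$ with $m+n\neq 0$, where $d_{m,n}\in \C$. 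The cases $m=0$, $n=0$ and $m+n=0$ are handled separately: skew-symmetry gives $p_{m,0}(\p,\M)=m$ and $p_{-m,m}(\p)$ is a polynomial in $\p$ only, of degree at most two by Proposition \ref{p4.6}(3) (applied at each index).

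Then I would pin down the constants $d_{m,n}$ and the quadratic polynomial $p_{-m,m}(\p)$. Applying the Jacobi identity to $(L_1,L_m,L_n)$ and comparing coefficients in $\p,\M,\mu$ yields a recursion of the form $d_{m,n+1}-d_{m,n}=md_{1,m+n}-(m+1)d_{1,m}-md_{1,n}$, paralleling \eqref{eq4-29} but now with no $d_{0,1}$ contribution (since $p_{0,j}$ is a constant, not linear in $\M$). Combined with the analogous relation from $(L_{-1},L_m,L_n)$, a direct induction should give $d_{m,n}=s(m-n)$ for some constant $s\in \C$ whenever $mn(m+n)\neq 0$, matching the generic bracket of $ECL(s)$. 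Finally, to identify $a,b$, I would use the Jacobi identity on $(L_1,L_1,L_{-1})$: the only surviving constraint comes from comparing $p_{-1,1}(\p)$ against $p_{1,-2}(\p,\M)$ and $p_{1,1}(\p,\M)$, and this should force $\{a,b\}=\{s,2s\}$, so that $p_{-1,1}(\p)=-(\p+s)(\p+2s)$. Extending by Jacobi gives $p_{-m,m}(\p)=-m(\p+s)(\p+2s)$.

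The main obstacle I anticipate is the bookkeeping at the three distinct regimes (indices $0$, $\pm m$, and generic), since the formulas for $p_{i,j}$ in $ECL(s)$ are not uniform in $i,j$; one must be careful that the Jacobi identities crossing these regimes do not contradict the generic pattern but instead single out the quadratic Casimir $(\p+s)(\p+2s)$. Once all $d_{m,n}$ and the quadratic are determined, the $\C[\p]$-module map $L_i\mapsto L_i$ gives the isomorphism $\LL\cong ECL(s)$.
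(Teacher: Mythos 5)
Your route is genuinely different from the paper's: you propose to redo the Lemma \ref{l4.9}/Proposition \ref{p4.10} computation from scratch in the degenerate regime $\deg p_{-1,1}=2$, $\deg p_{0,1}=0$, whereas the paper sidesteps all of that by setting $L'_0=(\p-y)L_0$ (where $p_{-1,1}(\p)=a(\p-x)(\p-y)$), noting the subalgebra generated by $L'_0,L_{\pm1}$ has $\deg p'_{-1,1}=\deg p'_{0,1}=1$, invoking Proposition \ref{p4.10} to get $CL_2(0,s)$ or $CL_3(s)$, and then recovering $\LL\cong ECL(s)$ by adjoining $L_0$ back. Your direct approach can be made to work, but the one place where you state concretely how the quadratic is identified is a genuine gap. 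The Jacobi identity for $(L_1,L_1,L_{-1})$, with $p_{1,0}=1$ and $p_{-1,1}(\p)=-(\p+a)(\p+b)$, reduces to $(\M-\mu)\bigl(2\p+\M+\mu+a+b\bigr)=\bigl(\M-\mu+d_{1,1}\bigr)\bigl(2\p+\M+\mu+d_{2,-1}\bigr)$, which yields only $d_{1,1}=0$ and $a+b=d_{2,-1}$ (i.e.\ $a+b=3s$ once $d_{m,n}=s(m-n)$ is in place). This fixes the sum of the roots but not the product, so it does \emph{not} force $\{a,b\}=\{s,2s\}$: for instance $\{0,3s\}$ or $\{3s/2,3s/2\}$ survive this identity. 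The missing constraint $ab=2s^2$ comes from the constant-term comparison in the Jacobi identities of $(L_{-1},L_1,L_n)$ --- the analogue of Equation \eqref{eq4-23}, here of the form $d_{1,n}d_{-1,n+1}-d_{-1,n}d_{1,n-1}=n\,ab$ --- and in this regime the quadratic enters those identities throughout, so the clean separation ``first solve for $d_{m,n}$, then identify $a,b$ from one identity'' does not hold as stated; the seeds $d_{0,1},d_{-1,1}$ of \eqref{eq4-21}--\eqref{eq4-23} no longer exist and the recursion must be run jointly with $a,b$.

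Two further points. Corollary \ref{c4.5} is stated for simple graded $\LL$, while class-$\mathcal{V}$ algebras are not assumed simple ($ECL(s)$ itself is not); to get $p_{0,j}\in\C$ with $p_{0,j}=jp_{0,1}$ you should instead argue from Lemma \ref{l4.3}(1), $[{\LL_0}_\M\LL_0]=0$ and the generation condition $({\bf C2})$. Also, the heavy lifting of your plan --- $Supp(\LL)=\Z$, nonvanishing and exact degree-one form of the generic $p_{m,n}$ after rescaling, and solving the $d_{m,n}$ recursion --- is left at the ``expect/should'' level; these steps are where the paper's substitution trick saves all the work, since Proposition \ref{p4.10} has already done them.
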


\begin{proof}
	If $degp_{-1,1}(\partial,\lambda)=2$, then $degp_{0,1}(\partial,\lambda)=0$ from Proposition \ref{p4.6}. Assume that
	\[ p_{-1,1}(\partial,\lambda)=a(\partial-x)(\partial-y) ,\]
	where $a,x,y\in \mathbb{C}$.
Now let $\mathcal{L}'$ be a subalgebra of  $\mathcal{L}$ generated by 
	$(\partial-y)L_0$, $L_1$, $L_{-1}$. Let $L'_0:=(\partial-y)L_0$, $L'_1=L_1$, $L'_{-1}=L_{-1}$.
	Then  \[[{L'_{-1}}_\lambda{L'_1}]=[{L_{-1}}_\lambda{L_1}]=p_{-1,1}(\partial,\lambda)L_0=p_{-1,1}(\partial,\lambda)\frac{1}{\partial-y}L'_0, \]
	
	Hence from Proposition \ref{p4.10}, we can find that $\mathcal{L}'\cong CL_2(0,s)$
	or $CL_3(s)$ for some $s\in \mathbb{C} $. Hence $\mathcal{L}\cong ECL(s)$.
\end{proof}

\begin{cor} {\label{c4.12}}Suppose that   $deg p_{0,1}(\p,\M)=2$. Then $\LL\cong SCL_2(0,s)$.
\end{cor}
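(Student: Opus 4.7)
By Proposition~\ref{p4.6}(3), the hypothesis $\deg p_{0,1}(\lambda)=2$ forces $\deg p_{-1,1}(\partial)=0$; write $p_{-1,1}(\partial,\lambda)=c$ with $c\in\mathbb{C}^*$ and $P(\lambda):=p_{0,1}(\lambda)=\alpha(\lambda-s_1)(\lambda-s_2)$ with $\alpha\in\mathbb{C}^*$. A quick Jacobi computation on $(L_1,L_1,L_{-1})$ rules out $p_{1,1}=0$ (otherwise $P$ would be constant, contradicting $\deg P=2$), so $2\in\operatorname{Supp}(\LL)$; iterating this type of argument together with Equation~\eqref{eq4-1} yields $\operatorname{Supp}(\LL)=\mathbb{Z}$ and $p_{0,j}(\lambda)=jP(\lambda)$ for every $j\in\mathbb{Z}$. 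In particular, $(\lambda-s_1)$ divides every $p_{0,j}(\lambda)$.

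The plan is to realize $\LL$ as the natural graded ideal of a larger Lie conformal algebra $\widetilde\LL$ covered by Proposition~\ref{p4.10}. Set $s:=s_1/2$ and let $\widetilde\LL$ be the $\mathbb{Z}$-graded $\mathbb{C}[\partial]$-module with $\widetilde\LL_i=\LL_i$ for $i\neq 0$ and $\widetilde\LL_0=\mathbb{C}[\partial]\widetilde L_0$, where $\widetilde L_0$ is a new free generator subject to the identification $L_0=(\partial+2s)\widetilde L_0$. Brackets among elements in nonzero degrees are inherited from $\LL$, while brackets involving $\widetilde L_0$ are forced by this identification and conformal sesquilinearity via
\[ [\widetilde L_0{}_\lambda L_j]:=\frac{-jP(\lambda)}{\lambda-2s}L_j, \qquad [\widetilde L_0{}_\lambda \widetilde L_0]:=0, \]
which are polynomial expressions precisely because $(\lambda-2s)$ divides every $p_{0,j}(\lambda)$.

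I then verify that $\widetilde\LL$ is a Lie conformal algebra of class $\mathcal{V}$. Sesquilinearity and skew-symmetry hold by construction. Any Jacobi identity in $\widetilde\LL$ involving a single occurrence of $\widetilde L_0$ reduces, upon multiplying both sides by the nonzero polynomial $(-\lambda+2s)$, to a Jacobi identity of $L_0=(\partial+2s)\widetilde L_0$ inside $\LL$, which holds by hypothesis; identities involving two copies of $\widetilde L_0$ are handled directly via $[\widetilde L_0{}_\lambda \widetilde L_0]=0$; the remaining identities lie entirely in $\LL$. The class-$\mathcal{V}$ axioms are immediate from those of $\LL$, and one computes $\deg\widetilde p_{-1,1}(\partial)=1$ (since $[L_{-1}{}_\lambda L_1]=cL_0=c(\partial+2s)\widetilde L_0$) and $\deg\widetilde p_{0,1}(\lambda)=1$.

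Applying Proposition~\ref{p4.10} to $\widetilde\LL$ gives $\widetilde\LL\cong CL_2(0,\tilde s)$ or $CL_3(\tilde s)$ for some $\tilde s\in\mathbb{C}$. In either case $\LL=\bigoplus_{i\neq 0}\widetilde\LL_i\oplus \mathbb{C}[\partial](\partial+2s)\widetilde L_0$ sits as a proper graded ideal inside $\widetilde\LL$; under the first isomorphism this is precisely the ideal $SCL_2(0,\tilde s)$ exhibited in the preamble, while under the second, a direct verification analogous to the $ECL(s)$-ideal descriptions identifies the corresponding ideal of $CL_3(\tilde s)$ with $SCL_2(0,\tilde s)$. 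Either way, $\LL\cong SCL_2(0,s)$ for the appropriate parameter $s$. I expect the main obstacle to be the Jacobi verification for $\widetilde\LL$: each instance conceptually reduces to a known Jacobi identity in $\LL$ after cancelling a nonzero sesquilinearity factor, but the bookkeeping is somewhat tedious.
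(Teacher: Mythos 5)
Your proposal is correct and follows essentially the same route as the paper: adjoining $\widetilde L_0$ with $L_0=(\partial+2s)\widetilde L_0$ is exactly the paper's construction of the auxiliary algebra $\mathcal{W}$ (where the chosen linear factor of $p_{0,1}$ is divided out of the $p_{0,j}$ and multiplied into the degree-zero outputs), after which both arguments verify the class-$\mathcal{V}$ axioms with $\deg p_{-1,1}=\deg p_{0,1}=1$, invoke Proposition \ref{p4.10}, and recognize $\LL$ as the ideal $SCL_2(0,s)$ sitting inside $CL_2(0,s)$ or $CL_3(s)$. The side claims you make ($Supp(\LL)=\Z$, $p_{1,1}\neq 0$) are not needed, but they do no harm; the essential input, divisibility of every $p_{0,j}$ by the chosen linear factor via Lemma \ref{l4.3}, is the same as in the paper.
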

\begin{proof}By Lemma {\ref{l4.3}}, $p_{0,n}(\p,\M)=np_{0,1}(\p,\M)$ for each $n\in Supp(\LL)$. Suppose that $p_{0,1}(\p,\M)=a(\M+b)(\M+c)$ for some $a,b,c\in \C$.  Let $\mathcal{W}$ be a free $\Z$-graded $\C[\p]$-modules such that $Supp(\mathcal{W})=Supp(\LL)$. We use $W_i$ to denote the $\C[\p]$-basis of $i$-th component of $\mathcal{W}$ for each $i\in Supp(\mathcal{W})$. We define a  $\M$-brackets $[\cdot_\M \cdot]$ on $\mathcal{W}$ from the structure polynomial of $\LL$ as follows: for each  $i, j\in Supp(\mathcal{W})$,
	\begin{align*}
		\begin{split} 
			[{W_{i}}_\lambda W_{j}]=\left\{ 
			\begin{array}{ll}
				(-\p+b)p_{i,j}(\p,\M)W_0,& i+j=0,\\
				\frac{p_{i,j}(\M)}{\M+b}W_{j},&i=0,\\
				-\frac{p_{i,j}(-\M-\p)}{(-\M-\p+b)}W_{i},&j=0,\\
				p_{i,j}(\p,\M)W_{i+j},&else.
			\end{array}
			\right.
		\end{split}
	\end{align*}
	One can check straightly that $\mathcal{W}$ is a Lie conformal algebra. In addition, $\mathcal{W}$ is generated by $W_1,W_0,W_{-1}$ and $deg (-\p+b)p_{-1,1}(\p,\M)=deg \frac{p_{0,1}(\M)}{\M+b}=1$ by the construction of $\mathcal{W}$. Thus $\mathcal{W}\cong CL_2(0,s)\  \text{or}\  CL_3(s)$ for some $s\in \C$. On the other hand, $\LL$  can be embedded into  $\mathcal{W}$  by mapping $L_0$ to $(-\p+b)W_0$ and $L_i$ to $W_i$ for each $i\neq 0$. Hence  $\LL\cong SCL_2(0,s)$.
\end{proof}

	\begin{pro}\label{p4.13}
	Assume that $p_{-1,1}(\partial,\lambda), p_{0,1}(\partial,\lambda)\in \mathbb{C}^*$, then
	\begin{align*}
		\begin{split}
			\mathcal{L}\cong \left \{ 
			\begin{array}{ll}
				Cursl(2,\mathbb{C}),& p_{1,1}(\partial,\lambda)=p_{-1,-1}(\partial,\lambda)=0,\\
				M(2),& p_{1,1}(\partial,\lambda)\neq 0, p_{-1,-1}(\partial,\lambda)\neq 0,\\
				M(1),& \text{else}.
			\end{array}
			\right.
		\end{split}
	\end{align*}
\end{pro}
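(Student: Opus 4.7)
The plan is to exploit the constraints of Lemma \ref{l4.3} and Proposition \ref{p4.6} together with base-change normalizations, and to treat each of the three sub-cases by a gradation induction. First, since $p_{0,1}\in\C^*$ is a nonzero constant, iteratively applying the additivity $p_{0,k+s}=p_{0,k}+p_{0,s}$ of Lemma \ref{l4.3} along each nonzero bracket (available because $\mathcal{L}$ is generated in degrees $-1,0,1$ by hypothesis {\bf C2}) shows that every $p_{0,j}$ is a constant with $p_{0,j}=j\,p_{0,1}$. Rescaling $L_0$ we may normalize $p_{0,1}$ to any convenient nonzero value.

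In Case 1 ($p_{1,1}=p_{-1,-1}=0$), no $\lambda$-bracket among $L_{-1},L_0,L_1$ ever escapes the $\C[\p]$-submodule $\C[\p]L_{-1}\oplus\C[\p]L_0\oplus\C[\p]L_1$, since $p_{\pm 1,\pm 1}=0$ and each of $p_{0,\pm 1},p_{-1,1}$ is a scalar. Because $\mathcal{L}$ is generated by these three lines, this submodule exhausts $\mathcal{L}$, and every $\lambda$-bracket is constant; hence $\mathcal{L}\cong Cur\mathcal{L}^0$. After rescaling so that $p_{0,1}=1$ and $p_{-1,1}=1$, the basic Lie algebra $\mathcal{L}^0$ carries the standard $sl(2,\C)$ relations, so $\mathcal{L}\cong Cursl(2,\C)$.

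In Case 3 (exactly one of $p_{\pm 1,\pm 1}$ is nonzero; relabeling $i\mapsto -i$ allows one to assume $p_{1,1}\neq 0$ and $p_{-1,-1}=0$), the only bracket among generators that could produce $\mathcal{L}_{-2}$ is $[{L_{-1}}_\lambda L_{-1}]=0$, so induction on $n$ gives $\mathcal{L}_{-n}=0$ for every $n\geq 2$, whence $Supp(\mathcal{L})\subseteq\Z_{\geq -1}$. On the positive side, the skew-symmetry divisibility $(\p+2\M)\mid p_{1,1}$ together with Proposition \ref{p4.6}(3) forces $p_{1,1}(\p,\M)=c(\p+2\M)$, and rescaling $L_2$ normalizes $c=1$. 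A Jacobi-identity induction on the triples $(L_1,L_1,L_{n-1})$ and $(L_{-1},L_1,L_n)$ then pins down $p_{1,n}$ and $p_{-1,n}$ as constants matching the $M(1)$ values (in particular $p_{-1,2}=0$ drops out of the $(L_{-1},L_1,L_1)$ Jacobi identity by a one-line calculation). A further Jacobi identity on $(L_1,L_1,L_n)$ forces $p_{2,n}=0$ for $n\geq 2$, and iterating gives $p_{i,j}=0$ whenever $i,j\geq 2$. Hence $\mathcal{L}\cong M(1)$. Case 2 ($p_{1,1},p_{-1,-1}\neq 0$) is handled by the same argument applied symmetrically on both sides, yielding $\mathcal{L}\cong M(2)$.

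The heart of the proof is the gradation induction in Cases 2 and 3: at each step one must solve a Jacobi functional equation to pin down $p_{\pm 1,n}$, and then invoke a further Jacobi identity to verify the vanishing $p_{i,j}=0$ for $|i|,|j|\geq 2$. These equations are markedly more tractable than those of Lemma \ref{l4.9} because the $p_{0,k}$ are here \emph{constants} rather than linear polynomials, so the unknowns at each step reduce to scalars in $\C$; the key rigidity used throughout is the additivity of $\{p_{0,k}\}_{k\in Supp(\mathcal{L})}$ together with the divisibility $(\p+2\M)\mid p_{i,i}$.
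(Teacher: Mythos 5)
Your overall route is the same as the paper's: split into the three cases according to the vanishing of $p_{1,1}$ and $p_{-1,-1}$, observe that all $p_{0,j}$ are constants, dispose of the current-algebra case directly, and in the remaining cases run a Jacobi-identity induction that determines $p_{\pm 1,n}$ up to rescaling of the $L_n$ and kills $p_{i,j}$ for $|i|,|j|\geq 2$; your explicit remark that $p_{-1,-1}=0$ together with generation in degrees $-1,0,1$ forces $\mathcal{L}_{-n}=0$ for $n\geq 2$ is a point the paper leaves implicit, and it is correct. However, two justifications as stated need repair. First, Proposition \ref{p4.6}(3) concerns $\deg p_{-k,k}+\deg p_{0,k}$ and gives no information about $p_{1,1}$; skew-symmetry only gives the divisibility $(\partial+2\lambda)\mid p_{1,1}$, i.e.\ a lower bound on the degree. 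The upper bound $\deg p_{1,1}\leq 1$, hence $p_{1,1}=c(\partial+2\lambda)$, must come from Lemma \ref{l4.3}(3), which applies because $p_{0,1}\in\mathbb{C}^*$ and $p_{0,2}=2p_{0,1}\neq 0$ (this is how the paper argues in its Case 4). This is a misattribution rather than a missing idea, but as written the step is unsupported.

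Second, the claim that the case $p_{1,1}\neq 0$, $p_{-1,-1}\neq 0$ is ``the same argument applied symmetrically on both sides'' misses a genuinely additional family of relations: symmetrizing your $M(1)$ argument only yields $p_{m,n}=0$ for $m,n\geq 2$ and $p_{-m,-n}=0$ for $m,n\geq 2$, but to identify $\mathcal{L}$ with $M(2)$ you must also show the mixed brackets vanish, $p_{-m,n}=0$ for $m,n\geq 2$, since here the support is all of $\mathbb{Z}$. The paper does this with the Jacobi identity of $L_{-1}$, $L_{-1}$, $L_n$ for $n\geq 1$ (its Equation \eqref{eq4-34}), where the known constancy of $p_{-1,n}$, $p_{-1,n-1}$ forces $p_{-1,-1}(-\lambda-\mu,\lambda)p_{-2,n}(\partial,\lambda+\mu)=0$, hence $p_{-2,n}=0$, followed by an induction on $m$. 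The computation is of the same flavor as your $(L_1,L_1,L_n)$ step, so this is an easily repaired omission rather than a flaw in strategy, but it should be added for the $M(2)$ identification to be complete.
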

\begin{proof}
	\begin{enumerate}[fullwidth,itemindent=0em,label=(\arabic*)]
		\item [{\bf Case 1.}]
		Suppose that $p_{1,1}(\partial,\lambda)=p_{-1,-1}(\partial,\lambda)=0$. It is obvious that $\mathcal{L}\cong Cursl(2,\mathbb{C})$.
		\item [{\bf Case 2.}]
		Assume that $p_{1,1}(\partial,\lambda)\ne 0, p_{-1,-1}(\partial,\lambda)=0$. Consider the  Jacobi-identity of  $L_{-1}$, $L_1$ and  $L_1$
		\begin{equation}\label{eq4-32}
			\begin{split}
			p_{-1,1}(-\lambda-\mu)p_{0,1}(\lambda+\mu)=&p_{1,1}(\partial+\lambda,\mu)p_{-1,2}(\partial,\lambda)\\
			&+p_{-1,1}(\partial+\mu)p_{0,1}(-\partial-\mu).
			\end{split}
		\end{equation}
	Hence $p_{1,1}(\partial+\lambda,\mu)p_{-1,2}(\partial,\lambda)=0$. It implies that $p_{-1,2}(\partial,\lambda)=0$. Next consider the  Jacobi-identity of  $L_{-1}$, $L_1$, $L_2$, we have 
		\begin{align*}
			p_{1,2}(\partial+\lambda,\mu )p_{-1,3}(\partial,\lambda)=&p_{-1,1}(-\lambda-\mu,\lambda)p_{0,2}(\partial,\lambda+\mu)\\
			&+p_{-1,2}(\partial+\mu,\lambda)p_{1,1}(\partial,\mu).
		\end{align*}
		Thus
		\[p_{1,2}(\partial+\lambda,\mu )p_{-1,3}(\partial,\lambda)=p_{-1,1}(-\lambda-\mu,\lambda)p_{0,2}(\partial,\lambda+\mu).\]
		Since $p_{0,2}(\lambda)=2p_{0,1}(\lambda)\in \mathbb{C}^*$, we have $p_{1,2}(\partial,\lambda),p_{-1,3}(\partial,\lambda)\in \mathbb{C}^*$. For each $n\geq 0$,  we can obtain by induction that 
		\[\frac{(n-1)(n+2)}{2}p_{-1,1}(-\lambda-\mu)p_{0,1}(\lambda+\mu)=p_{1,n}(\partial+\lambda,\mu)p_{-1,n+1}(\partial,\lambda),\]
	In particular, for each
	$n\geq2$, we have $p_{1,n}(\partial,\lambda)\in \mathbb{C}^*$ while $p_{-1,n+1}(\partial,\lambda)\in \mathbb{C}^*$ for each $n\geq 0$ and $n\neq 1$. 
Then consider the Jacobi-identity of  $L_1$, $L_1$ and $L_n$, we have
		\begin{equation}\label{eq4-33}
		\begin{split}
			p_{1,n}(\partial+\lambda,\mu)p_{1,n+1}(\partial,\lambda)=&p_{1,1}(-\lambda-\mu,\lambda)p_{2,n}(\partial,\lambda+\mu)\\
			&+p_{1,n}(\partial+\mu,\lambda)p_{1,n+1}(\partial,\mu).
		\end{split}
	    \end{equation}
		From Equation \eqref{eq4-33}, we can obtain that $p_{1,1}(-\lambda-\mu,\lambda)p_{2,n}(\partial,\lambda+\mu)=0$. Since $p_{1,1}(\partial,\lambda)\ne 0$, for each $ n\geq2$, $p_{2,n}(\partial,\lambda)=0$.
		Next we use induction on $m$ for $p_{m,n}(\p,\M)$.
		For each $ n\geq2$ assume that $p_{s-1,n}(\partial,\lambda)=0$ for some $s-1\geq 2$. Considering the Jacobi-identity of $L_1$, $L_{s-1}$ and $L_n$, we have
		\begin{align*}
			p_{s-1,n}(\partial+\lambda,\mu)p_{1,s+n-1}(\partial,\lambda)=&p_{1,s-1}(-\lambda-\mu,\lambda)p_{s,n}(\partial,\lambda+\mu)\\
			&+p_{1,n}(\partial+\mu,\lambda)p_{s-1,n+1}(\partial,\mu).
		\end{align*}
Thus
\[p_{1,s-1}(-\lambda-\mu,\lambda)p_{s,n}(\partial,\lambda+\mu)=0.
\]	
		Since  $p_{1,s-1}(\partial,\lambda)\in\mathbb{C}^*$, we obtain that  $p_{s,n}(\partial,\lambda)=0$. We may assume that $p_{1,1}(\partial,\lambda)=c_{1,1}(\partial+2\lambda)$ for some $c_{1,1}\in \mathbb{C}^*$. Now let
		\begin{align*}
			\begin{split}
				L'_n=\left \{
				\begin{array}{ll}
					\frac{1}{p_{-1,1}(\partial)p_{0,1}(\lambda)}L_{-1},&n=-1,\\
					\frac{-1}{p_{0,1}(\lambda)}L_0,&n=0,\\
					L_1,&n=1,\\
					c_{1,1}L_2,&n=2,\\
					p_{1,n-1}(\partial,\lambda)p_{1,n-2}(\partial,\lambda)\cdots p_{1,2}(\partial,\lambda)c_{1,1}L_{n},&n\geq3.
				\end{array}
				\right.
			\end{split}
		\end{align*}
	One can check directly that $\mathcal{L}\cong M(1)$ under the $\C[\p]$-module basis$\{L'_i|\ i\in \Z_{\geq -1}\}$.
		\item [{\bf Case 3.}]
		If $p_{1,1}(\partial,\lambda)=0$, $p_{-1,-1}(\partial,\lambda)\ne 0$, by exchanging the role of ${L_1}$ and $L_{-1}$, we will back to the Case 2.
		\item [{\bf Case4.}]
		Assume that $p_{1,1}(\partial,\lambda)\ne 0$ and  $p_{-1,-1}(\partial,\lambda)\ne 0$.
		From Lemma \ref{l4.3},\\
		$p_{0,-1}(\lambda)=-p_{0,1}(\lambda)\in \mathbb{C}^*$. Using the similar disscuss as in Case 2,  for each $ n\geq 0$, the Jacobi identity of $L_{-1}$, $L_1$ and $L_n$($L_{-n}$) tells us that
		\[\frac{(n-1)(n+2)}{2}p_{-1,1}(-\lambda-\mu)p_{0,1}(\lambda+\mu)=p_{1,n}(\partial+\lambda,\mu)p_{-1,n+1}(\partial,\lambda),\]
		and
		\[\frac{(n-1)(n+2)}{2}p_{1,-1}(-\lambda-\mu)p_{0,-1}(\lambda+\mu)=p_{-1,-n}(\partial+\lambda,\mu)p_{1,-n-1}(\partial,\lambda).\]
		From the above two equations, for each $n\geq2$, we have \[p_{1,n}(\partial,\lambda), p_{-1,n+1}(\partial,\lambda), p_{-1,-n}(\partial,\lambda), p_{1,-n-1}(\partial,\lambda)\in \mathbb{C}^*.\]
		Similarly, for each $ m,n\geq2$, we have $p_{m,n}(\partial,\lambda)=0$ and $p_{-m,-n}(\partial,\lambda)=0$.
		Further, for each $n\geq 1$, consider the Jacobi-identity of $L_{-1}$, $L_{-1}$ and $L_n$, we have
		\begin{equation}\label{eq4-34}
			\begin{split}
				p_{-1,n}(\partial+\lambda,\mu)p_{-1,n-1}(\partial,\lambda)=&p_{-1,-1}(-\lambda-\mu,\lambda)p_{-2,n}(\partial,\lambda+\mu)\\
				&+p_{-1,n}(\partial+\mu,\lambda)p_{-1,n-1}(\partial,\mu).
			\end{split}
		\end{equation}
		\rm Since  $p_{-1,n+1}(\partial,\lambda)\in\mathbb{C}$ for each $n\geq 0$ and $p_{-1,-1}(\partial,\lambda)\ne0$. Hence for each $n\geq 1$,  $p_{-2,n}(\partial,\lambda)=0$ by Equation \eqref{eq4-34}.
		By using induction, it is not hard to prove that $p_{-m,n}(\p,\M)=0$ for each $m,n\geq 2$. \rm By Lemma \ref{l4.3}(3), we may assume that  $p_{1,1}(\partial,\lambda)=c_{1,1}(\partial+2\lambda)$ and $p_{-1,-1}(\partial,\lambda)=c_{-1,-1}(\partial+2\lambda)$ for some $c_{1,1},c_{-1,-1}\in \mathbb{C}^*$.\\
		Let
		\begin{align*}
			\begin{split}
				L'_{n}=\left \{ 
				\begin{array}{ll}
					\frac{-1}{p_{0,1}(\lambda)}L_0,&n=0,\\
					L_1,&n=1 ,\\
					c_{1,1}L_{2}, &n=2,\\
					p_{1,n-1}(\partial,\lambda)p_{1,n-2}(\partial,\lambda)\cdots p_{1,2}(\partial,\lambda)c_{1,1}L_{n},&n\geq 3,\\
					\frac{1}{p_{-1,1}(\partial)p_{0,1}(\lambda)}L_{-1},&n=-1,\\
					\frac{c_{-1,-1}}{(p_{-1,1}(\partial)p_{0,1}(\lambda))^2}L_{-2},&n=-2,\\
				 \frac{p_{-1,n+1}(\partial,\lambda)p_{-1,n+2}(\partial,\lambda)\cdots p_{-1,-2}(\partial,\lambda)c_{-1,-1}}{\left(p_{-1,1}(\partial)p_{0,1}(\lambda)\right)^{-n}}L_{n},&n\leq -3.
				\end{array}
					\right.
				\end{split}
			\end{align*}
			One can check directly that $\mathcal{L}\cong M(2)$ with  the $\C[\p]$-basis $\{L'_n\}$.
			\end{enumerate}
		\end{proof}

\begin{theorem}{\label{t4.14}} $deg p_{-1,1}(\p)+deg p_{0,1}(\M)\neq 1$. Furthermore, 
	\begin{enumerate}[fullwidth,itemindent=0em,label=(\arabic*)]
	\item If $deg p_{-1,1}(\p)+deg p_{0,1}(\M)=0$, then $\LL$ is of type I. Explicitly, $\LL$ is isomorphic to one of the following:
	\[ Cursl(2,\C),\  M(1),\  M(2) \]
	for some $s\in\mathbb{C}$.
		\item If $deg p_{-1,1}(\p)+deg p_{0,1}(\M)=2$, then $\LL$ is of type II. Explicitly, $\LL$ is isomorphic to one of the following:
		 \[ECL(s),\  SCL_2(0,s),\ CL_2(0,s),\ CL_3(0,s)  \]
		for some $s\in\mathbb{C}$.
	\end{enumerate}
\end{theorem}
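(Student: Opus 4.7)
The plan is to leverage Proposition \ref{p4.6}(3), which bounds $\deg p_{-1,1}(\p)+\deg p_{0,1}(\M)\le 2$ so the sum lies in $\{0,1,2\}$. I will quickly assemble parts (1) and (2) from the results already established in this section, and then do the real work by contradiction to rule out the sum $=1$ case.

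When the sum is $0$, both $p_{-1,1}$ and $p_{0,1}$ are nonzero constants, and Proposition \ref{p4.13} immediately yields $\LL\cong Cursl(2,\C)$, $M(1)$, or $M(2)$. When the sum is $2$, the three admissible degree pairs $(\deg p_{-1,1}(\p),\deg p_{0,1}(\M))=(2,0),(1,1),(0,2)$ are handled respectively by Corollary \ref{c4.11}, Proposition \ref{p4.10}, and Corollary \ref{c4.12}, producing $ECL(s)$, $CL_2(0,s)$ or $CL_3(s)$, and $SCL_2(0,s)$. Thus parts (1) and (2) will follow once the first assertion is established.

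The core of the argument is therefore to rule out $\deg p_{-1,1}(\p)+\deg p_{0,1}(\M)=1$. I would split into two symmetric subcases: either $p_{-1,1}(\p)=a(\p+b)$ with $a\in\C^*$ and $p_{0,1}(\M)=c\in\C^*$, or $p_{-1,1}(\p)=a\in\C^*$ and $p_{0,1}(\M)=d(\M+b)$ with $d\in\C^*$. In each, I would first apply the Jacobi identity to $(L_{-1},L_1,L_1)$. After introducing $\p':=\p+\M$, the identity should reduce to
\[
p_{1,1}(\p',\mu)\,p_{-1,2}(\p'-\M,\M)=\gamma(\p'+2\mu)
\]
for some $\gamma\in\C^*$. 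Since the right hand side is $\M$-free and $p_{1,1}$ is independent of $\M$, the factor $p_{-1,2}(\p'-\M,\M)$ must also be $\M$-free, forcing $p_{-1,2}=\alpha\in\C^*$ and $p_{1,1}(\p,\M)=(\gamma/\alpha)(\p+2\M)$. In particular $p_{1,1}\ne 0$, so $2\in Supp(\LL)$.

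Finally I would apply the Jacobi identity to $(L_{-1},L_1,L_2)$. Plugging in the values of $p_{1,1}$ and $p_{-1,2}$ obtained above, and using $p_{0,2}(\M)=2p_{0,1}(\M)$ from Lemma \ref{l4.3}, the identity should reduce to
\[
p_{1,2}(\p+\M,\mu)\,p_{-1,3}(\p,\M)=\gamma'(\p+2\M+4\mu+\delta)
\]
for some $\gamma'\in\C^*$ and $\delta\in\C$. Passing again to $\p':=\p+\M$, the left hand side is a product of a polynomial in $(\p',\mu)$ and a polynomial in $(\p',\M)$, while the right hand side is a nontrivial linear polynomial in $\p',\M,\mu$ in which both the $\M$- and $\mu$-coefficients are nonzero. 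The main obstacle I anticipate is justifying that no such factorization can exist: neither factor can vanish (which would trivialize the left side), and a routine total-degree/separation-of-variables argument should show that a polynomial in $(\p',\mu)$ times a polynomial in $(\p',\M)$ cannot produce both a first-order $\M$-term and a first-order $\mu$-term simultaneously. This contradiction eliminates the sum $=1$ case and completes the proof.
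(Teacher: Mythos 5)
Your proposal is correct and takes essentially the same route as the paper: parts (1) and (2) are assembled from Propositions \ref{p4.10}, \ref{p4.13} and Corollaries \ref{c4.11}, \ref{c4.12}, and the sum-equals-one case is excluded by exactly the same two Jacobi identities, first for $(L_{-1},L_1,L_1)$ to force $p_{-1,2}\in\C^*$ and $p_{1,1}$ proportional to $\p+2\M$, then for $(L_{-1},L_1,L_2)$ to reach an impossible factorization. The only differences are cosmetic: you handle the two degree distributions $(0,1)$ and $(1,0)$ uniformly, whereas the paper reduces the latter to the former by replacing $L_0$ with $p_{-1,1}(\p)L_0$, and you make explicit the separation-of-variables/degree count that the paper leaves as ``which is impossible.''
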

\begin{proof}By Proposition {\ref{p4.6}},  $deg p_{-1,1}(\p)+deg p_{0,1}(\M)\leq 2$. From Propositions \ref{p4.10}, \ref{p4.13} and Corollaries  \ref{c4.11}, \ref{c4.12}, it is sufficient to consider the  case that   $deg p_{-1,1}(\p)+deg p_{0,1}(\M)=1$.

	  Suppose that $deg p_{-1,1}(\p)=0$ and $deg p_{0,1}(\p,\M)=1$. Then $p_{-1,1}(\partial)=c_{-1,1}$ and $p^1_{0,1}(\lambda)=c_{0,1}\lambda$ and $p_{1,1}(\partial,\lambda)=c_{1,1}(\partial+2\lambda)$ for some $c_{-1,1},c_{0,1},c_{1,1}\in\mathbb{C}^*$. From the Jacobi identity of $L_{-1}$, $L_{1}$ and $L_1$, we obtain that
	\begin{equation}\label{eq4-35}
		p^1_{1,1}(\partial+\lambda,\mu)p^1_{-1,2}(\partial,\lambda)=c_{-1,1}c_{0,1}(\partial+\lambda+2\mu).
	\end{equation}
Hence $p_{-1,2}(\partial,\lambda)=c_{-1,2}$ for some $c_{-1,2}\in\mathbb{C}^*$ and
		 \[c_{-1,2}c_{1,1}=c_{0,1}c_{-1,1}.\]
However,  the Jacobi-identity of $L_{-1},L_{1},L_{2}$ tells that
	\[p^1_{-1,3}(\p,\M)p^1_{1,2}(\p+\M,\mu)=c_{0,1}c_{-1,1}(\p+2\M+4\mu),\]
	which is impossible. As for the case that $deg p_{-1,1}(\p)=1$ and  $deg p_{0,1}(\p,\M)=0$. Consider the $\M$-brackets between $p_{-1,1}(\p)L_0$, $L_{1}$ and $L_{-1}$. Then it can be reduced to the former case. In conclusion,  the sum of  $deg p_{-1,1}(\p)$ and $deg p_{0,1}(\M)$ can not be 1.                                                                                                                                                                                                                                                                                                                                                                                                                                                                                                                                                                                                                                                                                                                                                                                                                                                                                                                                                                                                                                                                                                                                                                                                                                                                                                                                                                                                                                                                                                                                                                                                                                                                                                                                                                                                                                                                                                                                                                                                                                                                                                                                                                                                                                                                                                                                                                                                                                                                                                                                                                                                                                                                                                                                                                                                                                                                                                                                                                                                                                                                                                                                                                                                                                                                                                                                                                                                                                                                                                                                                                                                                                                                                                                                  
\end{proof}

\subsection{$\LL$ is non-integral}


 \begin{pro}{\label{p4.15}}Let $p_{-k,k}(\p,\M)\neq 0$ and  $p_{0,k}(\M)=0$ for some $k,-k\in Supp(\LL)$. Assume that $p_{0,s}(\p,\M)\neq 0$ for some $s\in Supp(\LL)$.  Then one of the following two cases holds:

	\noindent(i) For each $n\geq 0$, $p_{k,nk+s}(\p,\M)\neq 0$.\ \  (ii)for each $n\geq 0$, $p_{-k,-nk+s}(\p,\M)\neq 0$.
\end{pro}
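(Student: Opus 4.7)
The plan is to argue by contradiction. Assume both (i) and (ii) fail, and choose integers $n_1, n_2 \geq 0$ minimal with $p_{k, n_1 k + s} = 0$ and $p_{-k, -n_2 k + s} = 0$. Abbreviate $a_m := p_{k, mk+s}$, $b_m := p_{-k, mk+s}$ and $c_m := p_{0, mk+s}$. By minimality, $a_m \neq 0$ for $0 \leq m \leq n_1 - 1$ and $b_m \neq 0$ for $-n_2 + 1 \leq m \leq 0$. Iterating the additive relation \eqref{eq4-1} along these chains together with $p_{0, \pm k} = 0$ yields $c_m = p_{0, s} \neq 0$ for every $m \in [-n_2, n_1]$. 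Lemma \ref{l4.3}(1), applied to the pair $(\pm k,\, mk + s)$, then forces $a_m(\partial, \lambda) \in \mathbb{C}[\lambda]$ and $b_m(\partial, \mu) \in \mathbb{C}[\mu]$ on this range; an analogous Jacobi/skew-symmetry computation (in the spirit of Proposition \ref{p4.6}(2)) gives $p_{-k, k}(\partial, \lambda) \in \mathbb{C}[\partial]$, so the substitution $p_{k, -k}(-(\lambda+\mu), \lambda)$ depends only on $\lambda + \mu$.

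Under these reductions the Jacobi identity of $L_k, L_{-k}, L_{mk+s}$ collapses, for each $m \in [-n_2, n_1]$, to
\begin{equation*}
(\ast)_m:\quad b_m(\mu)\, a_{m-1}(\lambda) - a_m(\lambda)\, b_{m+1}(\mu) = R(\lambda + \mu),
\end{equation*}
where $R(t) := -p_{k, -k}(-t)\, p_{0, s}(t)$ is a fixed nonzero polynomial independent of $m$. At the upper boundary $(\ast)_{n_1}$ reads $b_{n_1}(\mu)\, a_{n_1 - 1}(\lambda) = R(\lambda + \mu)$, since $a_{n_1} = 0$. The key observation is that a product $A(\lambda)\, B(\mu)$ of univariate polynomials equal to a polynomial in $\lambda + \mu$ forces all three to be nonzero constants: differentiating in $\lambda$ and $\mu$ separately yields $A'(\lambda)/A(\lambda) = B'(\mu)/B(\mu) = c$, and in $\mathbb{C}[x]$ the equation $A' = cA$ admits a polynomial solution only for $c = 0$. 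Hence $a_{n_1 - 1}$, $b_{n_1}$ and $R \equiv R_0$ are nonzero constants.

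Set $P_m := a_m\, b_{m+1}$. With $R = R_0$ constant, $(\ast)_m$ becomes the recursion $P_{m-1} - P_m = R_0$, and the same factoring principle propagated downward by induction on $m$ shows that every $a_m$ and $b_{m+1}$ with $-n_2 \leq m \leq n_1 - 1$ is a nonzero constant. Starting from $P_{n_1 - 1} = R_0$ the recursion gives $P_m = (n_1 - m) R_0$. Finally, at the lower boundary $(\ast)_{-n_2}$ reads $-a_{-n_2}\, b_{-n_2+1} = R_0$ because $b_{-n_2} = 0$, i.e.\ $P_{-n_2} = -R_0$; combined with $P_{-n_2} = (n_1 + n_2) R_0$ this yields $(n_1 + n_2 + 1) R_0 = 0$, contradicting $R_0 \neq 0$.

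The degenerate case $n_1 = n_2 = 0$ is even more immediate: $a_0 = b_0 = 0$ reduces $(\ast)_0$ to $0 = R(\lambda + \mu)$, contradicting $R \neq 0$. The remaining edge cases (exactly one of $n_1, n_2$ vanishes) are handled identically, starting the factoring step from $(\ast)_0$ (where the zero factor already eliminates one term) and propagating the induction to the opposite boundary. I expect the main technical subtlety to lie in the polynomial factoring step, which forces $A$, $B$ and $R$ to be constants whenever $A(\lambda)\, B(\mu) = R(\lambda + \mu)$; once that rigidity is in hand, the rest of the argument is a clean telescoping recursion.
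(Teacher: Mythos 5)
Your argument is essentially the paper's: the same Jacobi identities for $L_k$, $L_{-k}$, $L_{mk+s}$, the same use of Equation \eqref{eq4-1} and Lemma \ref{l4.3}(1) to get $p_{0,mk+s}=p_{0,s}$ and to remove the $\partial$-dependence of the nonzero $p_{\pm k,mk+s}$, and the same telescoping recursion; the only packaging difference is that the paper argues ``$\neg$(i) $\Rightarrow$ (ii)'' by an unbounded downward induction, while you run the recursion between two finite boundaries and extract the numerical contradiction $(n_1+n_2+1)R=0$.

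One step in your write-up is not justified as stated: the claim that $p_{-k,k}(\partial,\lambda)\in\mathbb{C}[\partial]$, so that $p_{k,-k}(-\lambda-\mu,\lambda)$ depends only on $\lambda+\mu$. Proposition \ref{p4.6}(2) (equivalently Lemma \ref{l4.3}(2) applied to the pair $(k,-k)$) requires $p_{0,\pm k}\neq 0$, i.e.\ $k\in Supp_1(\mathcal{L})$, whereas here $p_{0,k}=0$ by hypothesis (and hence $p_{0,-k}=0$ as well), so that lemma gives nothing; I do not see how to force $p_{-k,k}\in\mathbb{C}[\partial]$ at this stage. Consequently your ``rigidity'' step, and the conclusion that all the $a_m$, $b_m$ and $R$ are constants, is not warranted as written. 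Fortunately this detour is unnecessary: keep the right-hand side as the fixed two-variable polynomial $R(\lambda,\mu)=p_{k,-k}(-\lambda-\mu,\lambda)\,p_{0,s}(\lambda+\mu)$ (this is exactly what the paper does). The upper boundary identity then reads $a_{n_1-1}(\lambda)\,b_{n_1}(\mu)=\pm R(\lambda,\mu)$, the telescoping gives $a_{m}(\lambda)\,b_{m+1}(\mu)=(n_1-m)\bigl(\pm R(\lambda,\mu)\bigr)$ for $-n_2\leq m\leq n_1-1$, and the lower boundary identity (where $b_{-n_2}=0$) yields $(n_1+n_2+1)R(\lambda,\mu)=0$, hence $p_{k,-k}=0$, contradicting $p_{-k,k}\neq 0$ by skew-symmetry; no constancy is needed anywhere, and the degenerate cases $n_1=0$ or $n_2=0$ go through verbatim. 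With that repair your proof is correct and coincides with the paper's computation.
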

\begin{proof}

	Assume that $(i)$ is not hold. Then there exists some positive integer $n$ such that $p_{k,kn+s}(\p,\M)=0$ and $p_{k,kj+s}(\p,\M)\neq 0$ for each $0\leq j<n$. Then by Lemma \ref{l4.3}, $p_{0,kj+s}(\p,\M)=p_{0,s}(\p,\M)$ for each $0\leq j\leq n$. 
	
	Consider the Jacobi-identity of $L_k$,$L_{-k}$ and $L_{kn+s}$, it is easy to see that    \[ p_{-k,k}(-\M-\mu,\M)p_{0,s}(\lambda+\mu)=-p_{k,(n-1)k+s}(\p,\mu)p_{-k,nk+s}(\p+\mu,\M).\]
	Thus $p_{0,s}(\lambda+\mu)\in \C^*$, $p_{k,(n-1)k+s}(\p,\M)$ and $p_{-k,nk+s}(\p+\mu,\M)\in \C[\M]$ by Lemma 4.3.
	Now one use induction through the Jacobi-identity of $L_k$,$L_{-k}$ and $L_{kj+s}$ for each $j<n$,  	
	it is easy to see that    \[ -(1+n-j)p_{-k,k}(-\M-\mu,\M)p_{0,s}(\lambda+\mu)=p_{k,(j-1)k+s}(\p,\mu)p_{-k,jk+s}(\p+\mu,\M),\] for each $j\leq n$. In particular,
	$p_{-k,jk+s}(\p,\M)\neq 0$  for each $j\leq 0$.
\end{proof}
\begin{lem}\label{l4.16} If $p_{-k,k}(\partial)\neq 0$ for some $k,-k\in Supp(\LL)$, then $k\in Supp_1(\LL)$.\end{lem}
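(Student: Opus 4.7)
The plan is to argue by contradiction: assume $p_{-k,k}(\partial)\neq 0$ while $p_{0,k}(\lambda)=0$, and derive a contradiction from the simple gradedness of $\LL$.

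First I would establish the symmetric statement $p_{0,-k}(\lambda)=0$. Feeding $[{L_0}_\lambda L_k]=p_{0,k}(\lambda)L_k=0$, $[{L_0}_\lambda L_0]=0$, and $[{L_k}_\mu L_{-k}]=-p_{-k,k}(\partial)L_0$ into the Jacobi identity for $L_0,L_k,L_{-k}$ collapses the whole identity to $p_{0,-k}(\lambda)\,p_{-k,k}(\partial)\,L_0=0$, and the non-vanishing of $p_{-k,k}(\partial)$ forces $p_{0,-k}=0$.

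Next I would place $L_k$ inside $[\mathcal{S}^{\perp}_{(\lambda)}\mathcal{S}^{\perp}]$, where $S=\{j\in Supp(\LL) : p_{0,j}(\lambda)=0\}$. Since $0,\pm k\in S$ but Lemma \ref{l4.2} (via simplicity) guarantees $\mathcal{S}^{\perp}\neq 0$, and the additivity \eqref{eq4-1} yields $[\mathcal{S}_{(\lambda)}\mathcal{S}^{\perp}]\subseteq \mathcal{S}^{\perp}$, Lemma \ref{l4.1} supplies $\mathcal{S}\subseteq[\mathcal{S}^{\perp}_{(\lambda)}\mathcal{S}^{\perp}]$. Hence I can pick $r,k-r\notin S$ with $p_{r,k-r}(\partial,\lambda)\neq 0$; Lemma \ref{l4.3}(2) then forces $p_{r,k-r}(\partial,\lambda)\in\C[\partial]\setminus\{0\}$.

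The crux is then the Jacobi identity for $L_{-k},L_r,L_{k-r}$: its left-hand side equals the $\mu$-free expression $p_{r,k-r}(\partial+\lambda)\,p_{-k,k}(\partial)\,L_0$. Writing out the right-hand side and invoking Lemma \ref{l4.3}(1)(2) to force $p_{-k,k-r}(\partial,\lambda)\in\C[\lambda]$ and $p_{r,-r}(\partial,\lambda)\in\C[\partial]$, the second RHS summand is already $\mu$-free. Matching the $\mu$-dependence of the first summand splits into two cases: if $p_{-k,r}=0$, the residual identity pins $p_{r,k-r}(\partial)$ to a non-zero constant and makes $p_{-k,k}(\partial)$ proportional to $p_{r,-r}(\partial)$, which can be collapsed further by re-running the argument with the roles of $k$ and $-k$ swapped; if $p_{-k,r}\neq 0$, I would apply Proposition \ref{p4.15} to the pair $(k,s)$ for some $s\notin S$ to produce an infinite tower $L_{nk+s}$ with $p_{k,nk+s}(\partial,\lambda)\in\C[\lambda]$, and then derive a recursion for $p_{-k,nk+s}(\partial,\lambda)$ whose growth in $n$ contradicts the $\mu$-free LHS of the analogous Jacobi identity.

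The main obstacle I anticipate is precisely this final polynomial-matching and case analysis: it is conceptually clean but requires tracking the $\mu$-dependence of all the auxiliary $p_{i,j}$ and closing both branches uniformly. A cleaner alternative I would also pursue in parallel is to show that, under the standing hypothesis, the $\C[\partial]$-submodule $\bigoplus_{j\in S}\LL_j$ (together with the brackets it closes on) is already a proper non-zero graded ideal of $\LL$, giving a direct violation of simple gradedness and hence the desired contradiction.
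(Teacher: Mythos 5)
There is a genuine gap, and it sits exactly where you flag your own ``main obstacle'': the contradiction is never actually produced. In your first case ($p_{-k,r}=0$) the Jacobi identity only pins down that $p_{r,k-r}$ and $p_{-k,k-r}$ are constants and that $p_{-k,k}(\p)$ is proportional to $p_{r,-r}(\p)$ --- perfectly consistent data, and ``re-running the argument with $k$ and $-k$ swapped'' has no identified endpoint. In your second case, Proposition \ref{p4.15} does give an infinite tower, but the tower by itself is harmless: along it the structure polynomials stay of bounded degree (the proof of Proposition \ref{p4.15} even shows $p_{0,jk+s}=p_{0,s}$ and forces the relevant $p_{\pm k,\cdot}$ into $\C[\M]$), so there is no ``growth in $n$'' to clash with a $\mu$-free left-hand side. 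The missing idea is that this lemma lives in the subsection where $\LL$ is assumed \emph{non-integral}, and the paper's proof uses that hypothesis essentially: assuming $p_{0,k}=0$, additivity \eqref{eq4-1} makes $p_{0,\cdot}$ \emph{constant} along the chain supplied by Proposition \ref{p4.15}, while non-integrality provides an arithmetic progression $an+b$ on which $p_{0,an+b}(\M)=f(\M)n+g(\M)$ with $f\neq 0$; applying $ad\,L_{\pm k}$ exactly $|a|$ times to $L_{an+b}$ (with $n>N+|k|$) lands back in the progression and forces $f(\M)n+g(\M)=f(\M)(n\pm k)+g(\M)$, the desired contradiction. Your argument, which tries to contradict simple gradedness alone, never invokes this hypothesis and cannot close either branch.

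Your proposed ``cleaner alternative'' also fails: with $S=\{j: p_{0,j}(\M)=0\}$, equation \eqref{eq4-1} gives $[\mathcal{S}_{(\M)}\mathcal{S}^{\perp}]\subseteq\mathcal{S}^{\perp}$ and $[\mathcal{S}_{(\M)}\mathcal{S}]\subseteq\mathcal{S}$, so $\mathcal{S}=\bigoplus_{j\in S}\LL_j$ is a graded subalgebra but emphatically not an ideal; the ideal produced by Lemma \ref{l4.1} is $\mathcal{S}^{\perp}+[{\mathcal{S}^{\perp}}_{(\lambda)}\mathcal{S}^{\perp}]$, which simplicity forces to be all of $\LL$, yielding no contradiction. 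The preliminary reductions you do carry out (deducing $p_{0,-k}=0$, locating $r$ with $p_{0,r}\neq 0$, $p_{r,k-r}\in\C[\p]$) are correct and consistent with Lemmas \ref{l4.1}--\ref{l4.3}, but they do not substitute for the non-integrality step.
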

\begin{proof}Since $\LL$ non-integral, there exists $a\neq 0,b\in \Z$ and $0\neq f(\lambda)$, $g(\lambda)\in \C[\M]$ and $N\in Z^+$ such that for  
	\[ p_{0,aq+b}(\M)=f(\M)q+g(\M),   \text{for each}\  q\geq N.  \]
		In particular, we can take $n> N+|k|$ such that $p_{0,an+b}(\M)\neq 0$.   Suppose that $p_{0,k}(\p,\M)=0$. By Proposition {\ref{p4.15}}, one can see that
	    \[	(ad^{|a|}{L_k}_{(\M)})L_{an+b}\neq \{0\}    \ \ \ \          \text{or}    \ \ 	(ad^{|a|}{L_k}_{(\M)})L_{an+b}\neq \{0\}.\]
Thus
       	      \[ f(\M)n+g(\M)=p_{0,an+b}(\M)=p_{0,a(n+k)+b}(\M)=f(\M)(n+k)+g(\M)\]           \text{or}   	\[f(\M)n+g(\M)=p_{0,an+b}(\M)=p_{0,a(n-k)+b}(\M)=f(\M)(n-k)+g(\M).\]   
It is a  contradiction.
 \end{proof}
\begin{pro}For each $k\in Supp_{1}(\LL)$, $\LL[k]$ has the same type. \end{pro}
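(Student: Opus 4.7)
The plan is to prove that Theorem~\ref{t4.14} yields the same classification for $\LL[k]$ regardless of the choice of $k\in Supp_1(\LL)$. Fix two such indices $k_1,k_2$, and aim to show that $\LL[k_1]$ and $\LL[k_2]$ lie in the same class among $Cursl(2,\C)$, $M(1)$, $M(2)$, $ECL(s)$, $SCL_2(0,s)$, $CL_2(0,s)$, $CL_3(s)$. First I would verify that each $\LL[k]$, after regrading so that its component in ambient degree $nk$ becomes the new degree $n$, is a $\Z$-graded Lie conformal algebra of class $\mathcal{V}$ in the sense of Section~4.2. Indeed, condition (\textbf{C1}) follows from the rank-one hypothesis on $\LL$ together with $-k\in Supp_1(\LL)$ (Proposition~\ref{p4.6}(1)) and $\LL_0\neq 0$ (Theorem~\ref{t3.7}); (\textbf{C2}) is built into the definition of $\LL[k]$; (\textbf{C3}) is precisely the defining condition of $Supp_1(\LL)$ together with our standing assumption that $\LL_0$ is abelian. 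Hence Theorem~\ref{t4.14} classifies each $\LL[k]$ into one of the seven named types, the type being controlled by the pair $(\deg p_{-k,k}(\p),\deg p_{0,k}(\M))\in\{(0,0),(2,0),(1,1),(0,2)\}$ and, in the $(0,0)$ case, by the vanishing pattern of $p_{k,k}$ and $p_{-k,-k}$.

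The second step is to show that $\deg p_{0,k}(\M)$ is independent of $k\in Supp_1(\LL)$. By Corollary~\ref{c4.5} applied at $k_1$, one has $p_{0,k_2}(\M)=c_{k_2}\,p_{0,k_1}(\M)$; since $k_2\in Supp_1(\LL)$ forces $p_{0,k_2}\neq 0$, the scalar $c_{k_2}$ is non-zero and therefore $\deg p_{0,k_1}=\deg p_{0,k_2}=:d$. Combined with the restriction $\deg p_{-k,k}+\deg p_{0,k}\in\{0,2\}$ from Theorem~\ref{t4.14}, this already pins down the type whenever $d\neq 0$: $d=2$ forces $\LL[k]\cong SCL_2(0,s)$, while $d=1$ restricts each $\LL[k]$ to $\{CL_2(0,s),CL_3(s)\}$. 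A short normalisation comparison via the Jacobi identity on $(L_{k_1},L_{-k_1},L_{k_2})$, in the spirit of the proof of Proposition~\ref{p4.10}, is then expected to separate $CL_2$ from $CL_3$ uniformly across $Supp_1(\LL)$.

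The main obstacle is the case $d=0$, where the type of $\LL[k]$ is further determined by $\deg p_{-k,k}(\p)\in\{0,2\}$ (with value $2$ giving $ECL(s)$ and value $0$ leaving $Cursl(2,\C),M(1),M(2)$) and, when $\deg p_{-k,k}=0$, by the vanishing pattern of $p_{k,k}$ and $p_{-k,-k}$. My plan is to suppose by contradiction that $\LL[k_1]$ and $\LL[k_2]$ have different types and then to expand the Jacobi identity on the triples $(L_{k_1},L_{-k_1},L_{k_2})$, $(L_{k_1},L_{k_1},L_{k_2})$ and $(L_{-k_1},L_{-k_1},L_{k_2})$, exploiting that $p_{0,k_2}(\M)\in\C^*$ in the present case. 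In each such expansion the polynomial degree of $p_{-k_1,k_1}(\p)$, and the vanishing or not of $p_{\pm k_1,\pm k_1}$, must be transported to the corresponding datum at $k_2$ through the crossed brackets $p_{\pm k_1,k_2}(\p,\M)$ and $p_{\pm k_1,\pm k_1+k_2}(\p,\M)$, whose degrees are themselves restricted by Lemma~\ref{l4.3}. I expect the most delicate computation to be the incompatibility between a quadratic $p_{-k_1,k_1}(\p)$ (i.e.\ $ECL$-behaviour at $k_1$) and a scalar $p_{-k_2,k_2}$ (i.e.\ $M(i)$- or $Cursl(2)$-behaviour at $k_2$): after Jacobi transport the quadratic factor has nowhere to be absorbed, producing the required contradiction and, together with an analogous argument for the $(p_{k,k},p_{-k,-k})$ vanishing pattern, completing the proof.
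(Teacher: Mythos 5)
Your reduction to Theorem \ref{t4.14} and your treatment of the case $\deg p_{0,k}\geq 1$ are fine and run parallel to the paper: Corollary \ref{c4.5} makes $\deg p_{0,k}$ constant on $Supp_1(\LL)$, so if it is positive for one $k$ it is positive for all, and every $\LL[k]$ is then of type II. Note, however, that the proposition only asserts agreement of the coarse type I / type II dichotomy defined in Section 2 (this is all that is used later, in Propositions \ref{p4.18}, \ref{p4.21}, \ref{p4.22}); your additional programme of separating $CL_2(0,s)$ from $CL_3(s)$, or $M(1)$ from $M(2)$ from $Cursl(2,\C)$, across different $k$ is not required, and in your proposal it is in any case only announced, not carried out.

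The genuine gap is in the one case that actually needs an argument: $\deg p_{0,k}=0$ on all of $Supp_1(\LL)$, with $\LL[k_1]\cong ECL(s)$ (so $\deg p_{-k_1,k_1}=2$) while $\LL[k_2]$ is of type I (so $p_{-k_2,k_2}\in\C^*$). Your proposed mechanism --- that after transport through the Jacobi identity on $(L_{k_1},L_{-k_1},L_{k_2})$ ``the quadratic factor has nowhere to be absorbed'' --- does not produce a contradiction. In that identity the quadratic contribution $p_{k_1,-k_1}(-\M-\mu)\,p_{0,k_2}(\M+\mu)$ is absorbed by the products of cross-brackets $p_{-k_1,k_2}\,p_{k_1,k_2-k_1}$ and $p_{k_1,k_2}\,p_{-k_1,k_1+k_2}$, each factor having degree at most one by Lemma \ref{l4.3}; indeed the only conclusion available at this stage is that these cross-brackets have degree exactly one, which is precisely how the paper starts. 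The decisive step, missing from your plan, is to use Lemma \ref{l4.3}(3) to write $p^1_{k_2,k_1}(\p,\M)=c\bigl(p_{0,k_2}(\M)\p+p_{0,k_1+k_2}(\M)\M\bigr)$ and substitute this into a second Jacobi identity, on the triple $(L_{k_2},L_{-k_2},L_{k_1})$ --- a triple absent from your list, which instead contains $(L_{\pm k_1},L_{\pm k_1},L_{k_2})$, relevant only to the unneeded vanishing-pattern comparison. That second identity forces $p_{0,k_2}=p_{0,k_1+k_2}$, hence $p_{0,k_1}=0$ by \eqref{eq4-1}, contradicting $k_1\in Supp_1(\LL)$. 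Without this step your argument does not close the only nontrivial case of the proposition.
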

\begin{proof}Suppose that $\LL[k_1]\cong ECL(s)$ for some $k_1\in Supp_1(\LL)$. Assume that $\LL[k]$ is of type I  for some $k\in Supp_1(\LL)$.  Then from the Jacobi-identity of $L_{k_1}$,$L_{-k_1}$ and $L_{k}$, one can see that $deg p_{k,k_1}(\p,\M)=1$.  Hence
\begin{equation}{\label{4.35}} p^1_{k,k_1}(\p,\M)=c_{k,k_1}(p_{0,k}(\M)\partial+p_{0,k+k_1}(\M)\M).   
\end{equation}
Plugging \eqref{4.35} into the  Jacobi-identity of $L_{k}$,$L_{-k}$ and $L_{k_1}$, we can see that $p_{0,k}(\M)=p_{0,k+k_1}(\M)$. It implies $p_{0,k_1}(\M)=0$ which is contradict to the choice of $k_1$.	
\end{proof}

\begin{pro}{\label{p4.18}}For each $m,n\in Supp_1(\LL)$, we have $np_{0,m}(\M)=mp_{0,n}(\M)$. \end{pro}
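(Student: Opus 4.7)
My plan is to exploit the fact, guaranteed by Proposition 4.17, that all subalgebras $\LL[k]$ (for $k\in Supp_1(\LL)$) are isomorphic to the same class $\mathcal{V}$ algebra $T$ listed in Theorem \ref{t4.14}, and to extract the desired relation by reading off the structure of $T$.

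The key identity I aim to prove is
\[
p_{0,jk}(\M)=j\,p_{0,k}(\M)\qquad\text{for all }k\in Supp_1(\LL)\text{ and all }j\text{ with }jk\in Supp(\LL[k]).
\]
To prove it, I would fix a grading-respecting isomorphism $\varphi_k\colon\LL[k]\to T$. It must send $L_0$ to $\alpha_k L_0^T$ and $L_{jk}$ to $\beta_j L_j^T$ for scalars $\alpha_k,\beta_j\in\C^*$; applying $\varphi_k$ to $[{L_0}_\M L_{jk}]=p_{0,jk}(\M)L_{jk}$ then gives $p_{0,jk}(\M)=\alpha_k\,p_{0,j}^T(\M)$. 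A direct inspection of the structure constants of each type in Theorem \ref{t4.14}---such as $p_{0,j}^{CL_2(0,s)}(\M)=j(\M-s)$, $p_{0,j}^{SCL_2(0,s)}(\M)=-j(\M-2s)(\M-s)$, and $p_{0,j}^{ECL(s)}(\M)=-j$---confirms that $p_{0,j}^T(\M)=j\,p_{0,1}^T(\M)$ holds in every type. Combining gives the key identity.

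For $m,n\in Supp_1(\LL)$, I would then apply the key identity twice, with $(k,j)=(m,n)$ and with $(k,j)=(n,m)$, to obtain
\[
n\,p_{0,m}(\M)\;=\;p_{0,mn}(\M)\;=\;m\,p_{0,n}(\M),
\]
which is the desired conclusion. Both applications require $mn\in Supp(\LL[m])\cap Supp(\LL[n])$, and this is automatic in the Type II cases $T\in\{CL_2(0,s),CL_3(s),ECL(s),SCL_2(0,s)\}$, since there $Supp(T)=\Z$ forces $Supp(\LL[k])=\Z k$.

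The main obstacle I foresee is the Type I case where $T=Cursl(2,\C)$, so that $Supp(\LL[k])=\{-k,0,k\}$ and $mn$ generally lies outside $Supp(\LL[m])$. To handle it I plan to supplement the argument with Lemma \ref{l4.3}, which supplies the additivity $p_{0,m+n}(\M)=p_{0,m}(\M)+p_{0,n}(\M)$ whenever $p_{m,n}(\p,\M)\neq 0$, and invoke the simplicity of $\LL$ to propagate this additivity throughout the grading until $p_{0,k}(\M)$ is seen to depend linearly on $k\in Supp_1(\LL)$.
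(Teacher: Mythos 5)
Your treatment of the type II case is correct and is exactly the paper's argument: there $Supp(\LL[k])=k\Z$, so $np_{0,m}(\M)=p_{0,mn}(\M)=mp_{0,n}(\M)$ follows from applying $p_{0,jk}(\M)=j\,p_{0,k}(\M)$ inside $\LL[m]$ and inside $\LL[n]$. The genuine gap is the type I case, which you correctly flag as the obstacle but do not resolve. When $\LL[k]\cong Cur\,sl(2,\C)$ one has $Supp(\LL[k])=\{-k,0,k\}$, the product $mn$ is not reachable inside either $\LL[m]$ or $\LL[n]$, and (by Corollary \ref{c4.5}) all the $p_{0,j}(\M)$ are constants; the assertion $np_{0,m}=mp_{0,n}$ then becomes a statement about the eigenvalues of $\mathrm{ad}(L_0)$ on the graded components of the basic Lie algebra $\LL^0$, and the paper disposes of it by invoking Mathieu's Lemma 8 from \cite{M2} applied to $\LL^0$ --- a nontrivial external input, not something that falls out of Lemma \ref{l4.3} alone.

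Your proposed substitute --- ``propagate the additivity $p_{0,m+n}=p_{0,m}+p_{0,n}$ through the grading using simplicity'' --- is not carried out, and as stated it is essentially circular. The natural way to exploit simplicity here is the argument of Corollary \ref{c4.19}: fix $k\in Supp_1(\LL)$, set $S=\{n\,:\,np_{0,k}(\M)=kp_{0,n}(\M)\}$, note that additivity makes $\mathcal{S}=\bigoplus_{n\in S}\LL_n$ satisfy $[\mathcal{S}_{(\M)}\mathcal{S}^{\perp}]\subset\mathcal{S}^{\perp}$, and derive a contradiction from $\mathcal{S}^{\perp}\neq\{0\}$. But closing that contradiction requires producing some $t\in Supp_1(\LL)$ with $L_t\in\mathcal{S}^{\perp}$ and then already knowing $tp_{0,k}=kp_{0,t}$ for elements of $Supp_1(\LL)$ --- which is precisely Proposition \ref{p4.18}, the statement being proved. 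So you need an independent argument for the $Cur\,sl(2,\C)$ case (Mathieu's lemma on simple graded Lie algebras with one-dimensional components, or a direct computation with Jacobi identities along chains of nonvanishing brackets connecting $m$ and $n$); without it the proof is incomplete.
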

\begin{proof}
(i)$L[m]$ and $L[n]$ are of type I. By Corollary \ref{c4.5}, $p_{0,n}(\lambda)\in \C$ for each $n\in Supp(\LL)$.  It is immeadite from the \cite[Lemma 8]{M2} by consider the structre of $\LL^{0}$.

(ii) $L[m]$ and $L[n]$ are of type II. In this case, we have
		\[np_{0,m}(\M)=p_{0,mn}(\M)=mp_{0,n}(\M).\]
\end{proof}
\begin{cor}{\label{c4.19}}
	There exists some $0\neq f(\M)\in \C[\M]$ such that $p_{0,n}(\M)=nf(\M)$ for each $n\in Supp(\LL)$.
\end{cor}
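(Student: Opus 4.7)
The plan is to pick a reference index $k_0 \in Supp_1(\LL)$, which exists by Lemma \ref{l4.2}, set
\[ f(\M) := p_{0,k_0}(\M)/k_0, \]
which is nonzero since $p_{0,k_0}(\M) \neq 0$ by the definition of $Supp_1$, and then prove that
\[ T := \{n \in Supp(\LL) \mid p_{0,n}(\M) = nf(\M)\} \]
coincides with the whole $Supp(\LL)$.

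First I would establish the easy inclusion $Supp_1(\LL) \cup \{0\} \subset T$. The index $0$ lies in $T$ because $\LL_0$ is abelian, so $p_{0,0}(\M) = 0 = 0 \cdot f(\M)$. For any $n \in Supp_1(\LL)$, Proposition \ref{p4.18} gives $np_{0,k_0}(\M) = k_0 p_{0,n}(\M)$; dividing by $k_0$ yields $p_{0,n}(\M) = nf(\M)$.

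Next I would set $I := \bigoplus_{n \in T}\LL_n$ and $J := \bigoplus_{n \in Supp(\LL)\setminus T}\LL_n$ and verify the bracket inclusion $[I_{(\lambda)} J] \subset J$. For $m \in T$ and $n \notin T$ with $p_{m,n}(\p,\M) \neq 0$, Lemma \ref{l4.3} forces
\[ p_{0,m+n}(\M) = p_{0,m}(\M) + p_{0,n}(\M) = mf(\M) + p_{0,n}(\M); \]
were $m+n$ in $T$, this would give $p_{0,n}(\M) = nf(\M)$, contradicting $n \notin T$. Hence $m+n \notin T$, so $[{L_m}_\lambda L_n] \in \LL_{m+n} \subset J$.

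Finally I would invoke Lemma \ref{l4.1} with $\mathcal{S} = I$ and $\mathcal{S}^\perp = J$ to conclude that $J + [J_{(\lambda)} J]$ is a graded ideal of $\LL$. Assume for contradiction that $T \neq Supp(\LL)$, so $J \neq 0$ and this ideal is nonzero; by simple-gradedness it must equal $\LL$ and in particular contain $\LL_0$. But $J_0 = 0$ since $0 \in T$, and for each nonzero $m \in Supp(\LL) \setminus T$ the condition $m \notin Supp_1(\LL)$ together with the contrapositive of Lemma \ref{l4.16} gives $p_{-m,m}(\p) = 0$; so $[J_{(\lambda)} J]_0 = 0$ as well, contradicting $\LL_0 \neq \{0\}$ (Theorem \ref{t3.7}). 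Therefore $T = Supp(\LL)$, which is exactly the claim.

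The main obstacle is extending the identity $p_{0,n}(\M) = nf(\M)$ from $Supp_1(\LL)$, where Proposition \ref{p4.18} applies directly, to the remaining indices of $Supp(\LL) \setminus Supp_1(\LL)$. This is precisely where the non-integrality hypothesis enters: Lemma \ref{l4.16} forces $p_{-m,m}(\p) = 0$ whenever $m \notin Supp_1(\LL)$, making the zeroth graded piece of $[J,J]$ vanish and producing the required contradiction with simplicity.
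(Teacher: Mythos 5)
Your proof is correct and follows essentially the same route as the paper: the paper also fixes $k\in Supp_1(\LL)$, forms the set $S=\{n\in Supp(\LL)\mid np_{0,k}(\lambda)=kp_{0,n}(\lambda)\}$, uses Lemma \ref{l4.1} on the decomposition $\mathcal{S}\oplus\mathcal{S}^{\perp}$, and derives the contradiction from Lemma \ref{l4.16} together with Proposition \ref{p4.18}. The only difference is cosmetic: the paper extracts a single $L_t\in\mathcal{S}^{\perp}$ with $p_{t,-t}\neq 0$ and shows $t\in Supp_1(\LL)\subset S$, whereas you argue the contrapositive by showing the degree-zero part of $\mathcal{S}^{\perp}+[{\mathcal{S}^{\perp}}_{(\lambda)}\mathcal{S}^{\perp}]$ vanishes.
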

\begin{proof}
Since $Supp_1(\LL)$ is not empty. Take some $k\in Supp_1(\LL)$. Set \[S=\{n\in Supp(\LL)|np_{0,k}(\M)=kp_{0,n}(\lambda)\}\] and let $\mathcal{S}=\bigoplus_{k\in S}C[\partial]L_k$.  Then by Lemma \ref{l4.1}, $[\mathcal{S}_{(\M)}\mathcal{S}^{\perp}]\subset \mathcal{S}^{\perp}$. Hence by Lemma \ref{l4.2}, there exists some $L_{t}\in \mathcal{S}^{\perp}$ such that $p_{t,-t}(\p)\neq 0$. By Lemma {\ref{l4.16}}, $t\in Supp_1(\LL)$. Hence, by  Proposition {\ref{p4.18}}, $t\in S$, which is a contradiction. Thus $\LL=\mathcal{S}$. Finally, let $f(\M)=\frac{1}{k}p_{0,k}(\M)$. One can check directly that  $p_{0,n}(\M)=nf(\M)$ for each $n\in Supp(\LL)$. 
\end{proof}
\begin{lem}\label{l4.20}Suppose that $L[m]$ and $L[n]$ are of type II for some $m,n\in Supp_1(\LL)$.  if $k_1m+k_2n\neq 0$ for any $k_1,k_2\in \Z$, then  $k_1m+k_2n\in Supp_1(\LL)$.
\end{lem}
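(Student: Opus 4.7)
My plan is a two-step argument for $t := k_1 m + k_2 n$: first establish $t \in Supp(\LL)$, then upgrade to $t \in Supp_1(\LL)$ via Lemma~\ref{l4.16}. The upgrade requires $t \in Supp(\LL)$, $p_{0,t}(\M) \neq 0$ (automatic from Corollary~\ref{c4.19}, since $p_{0,t}(\M) = tf(\M)$ with $t \neq 0$), and $p_{-t,t}(\p) \neq 0$. The whole argument proceeds by induction on $|k_1| + |k_2|$.

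The base cases $k_1 = 0$ or $k_2 = 0$ follow immediately from the explicit type II structure of $\LL[m]$ and $\LL[n]$ (Theorem~\ref{t4.14}): in each of $ECL(s)$, $SCL_2(0,s)$, $CL_2(0,s)$, $CL_3(s)$, every nonzero integer multiple of the generator lies in $Supp_1(\LL)$.

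For the inductive step the key tool is the Jacobi identity of $L_m$, $L_{-m}$, $L_s$ for $s \in Supp(\LL) \setminus \{0\}$:
\[
p_{-m,s}(\p+\M,\mu)\,p_{m,s-m}(\p,\M) - p_{m,s}(\p+\mu,\M)\,p_{-m,s+m}(\p,\mu) = p_{m,-m}(-\M-\mu,\M)\,p_{0,s}(\M+\mu).
\]
Since $m \in Supp_1(\LL)$ gives $p_{m,-m} \neq 0$ and Corollary~\ref{c4.19} gives $p_{0,s}(\M) = sf(\M) \neq 0$, the right-hand side is non-zero, so at least one of $p_{\pm m,s}$ is non-zero, meaning at least one of $s \pm m$ lies in $Supp(\LL)$; the analogous statement holds with $n$ in place of $m$. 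Combined with the inductive hypothesis applied to the inner neighbors $s_1 := (k_1 - \operatorname{sgn}(k_1))m + k_2 n$ and $s_2 := k_1 m + (k_2 - \operatorname{sgn}(k_2))n$ (both in $Supp_1(\LL)$ by IH, after handling the degenerate cases where either equals $0$, which would immediately yield $t = \pm m$ or $t = \pm n$), this forces $t \in Supp(\LL)$: should both direct propagations to $t$ fail, i.e., $p_{\operatorname{sgn}(k_1)m,\,s_1} = 0$ and $p_{\operatorname{sgn}(k_2)n,\,s_2} = 0$, then applying Jacobi to $L_m$, $L_n$, $L_u$ with $u := (k_1 - \operatorname{sgn}(k_1))m + (k_2 - \operatorname{sgn}(k_2))n \in Supp_1(\LL)$ yields $p_{m,n}(-\M-\mu,\M)\,p_{m+n,u}(\p,\M+\mu) = 0$, and each of the two resulting cases is then ruled out via further Jacobi identities (for $p_{m,n} = 0$, Jacobi of $L_m, L_{-m}, L_n$ gives information about $p_{\pm m, n \mp m}$ that ultimately clashes with the type II data; for $p_{m+n,u} = 0$, a symmetric argument applies). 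An analogous Jacobi-chasing argument, comparing Jacobi of $L_m, L_{-m}, L_{t-m}$ against $L_n, L_{-n}, L_{t-n}$, then establishes $p_{-t,t}(\p) \neq 0$.

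The main obstacle is the bookkeeping in the inductive step: the Jacobi identity only guarantees that \emph{at least one} of a pair of polynomials is non-zero, so several Jacobi identities must be chained together to force non-vanishing of the specific polynomial we need. Additionally, staying within the lattice while avoiding $0$ as an intermediate point is delicate, particularly in the degenerate configurations where inner neighbors coincide with the origin. The non-integrality of $\LL$ (via Corollary~\ref{c4.19}), which guarantees $p_{0,s}(\M) \neq 0$ at every nonzero $s$, plays a crucial role throughout.
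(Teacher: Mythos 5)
Your skeleton---first show $t:=k_1m+k_2n\in Supp(\LL)$, then show $p_{-t,t}(\p)\neq0$ (with $p_{0,t}\neq0$ free from Corollary \ref{c4.19})---matches what must be done, but the proposal leaves genuine gaps at exactly the two points where the work lies, and it substitutes an unnecessary induction for what the paper does in one stroke. Because $L[m]$ and $L[n]$ are of type II, every nonzero multiple $k_1m$, $k_2n$ already lies in $Supp_1(\LL)$, with $p_{-k_1m,k_1m}(\p)\neq0$ and $p_{-k_2n,k_2n}(\p)\neq0$; so no induction on $|k_1|+|k_2|$ is needed. The paper simply applies the Jacobi identity to $L_{k_1m}$, $L_{-k_1m}$, $L_{k_2n}$, in which the nonzero term $p_{k_1m,-k_1m}\,p_{0,k_2n}$ forces $p_{k_1m,k_2n}(\p,\M)\neq0$, hence $t\in Supp(\LL)$. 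Your inductive step, by contrast, is only gestured at: you yourself note that each Jacobi identity gives ``at least one of a pair'' nonzero, and the case analysis meant to exclude the unwanted alternative is merely asserted (``ultimately clashes with the type II data'', ``a symmetric argument applies'') rather than carried out; as written, nothing rules out the bad branch.

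The more serious gap is the second half: $p_{-t,t}(\p)\neq0$ is the entire content of upgrading $Supp$ to $Supp_1$ (Lemma \ref{l4.16} buys you nothing beyond this, since $p_{0,t}\neq0$ is automatic in the non-integral setting), and your proposal dismisses it with ``an analogous Jacobi-chasing argument''. The paper closes it with one specific identity: the Jacobi identity of $L_{-k_1m}$, $L_{-k_2n}$, $L_{t}$ reads
\[
p_{-t,t}(\p,\M+\mu)\,p_{-k_1m,-k_2n}(-\M-\mu,\M)=p_{-k_1m,k_1m}(\p)\,p_{-k_2n,t}(\p+\M,\mu)-p_{-k_2n,k_2n}(\p)\,p_{-k_1m,t}(\p+\mu,\M),
\]
and one checks (using Lemma \ref{l4.3} and Corollary \ref{c4.19}) that the right-hand side can vanish identically only when $t=0$; since $t\neq0$, the factor $p_{-t,t}$ on the left must be nonzero. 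The idea your proposal is missing is precisely this choice of a single Jacobi identity in which the target polynomial occurs as a factor of a side already known to be nonzero; without it (or a worked-out substitute), the argument does not close.
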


\begin{proof}
 Let us consider the Jacobi-identity of $L_{k_1m}$,$L_{-k_1m}$ and $L_{k_2n}$, we will find that $p_{k_1m,k_2n}(\p,\M)\neq 0$. Let us consider Jacobi-identity  the $L_{-k_1m}$,$-L_{k_2n}$ and $L_{k_1m+k_2n}$, we can obtain that 
	\begin{equation}{\label{eq4-36}}
		\begin{split}  
		&p_{-k_1m-k_2n,k_1m+k_2n}(\p,\lambda+\mu)p_{-k_1m,-k_2n}(-\M-\mu,\M)\\=&p_{-k_1m,k_1m}(\p)p_{-k_2n,k_1m+k_2n}(\p+\M,\mu)\\
		&-p_{-k_2n,k_2n}(\p)p_{-k_1m, k_1m+k_2n}(\p+\mu,\M)	
	\end{split}
	\end{equation} 
	 One can see that both sides of  Equation \ref{eq4-36} vanish if and only if  $k_1m+k_2n=0$. Hence if $k_1m+k_2n\neq 0$, then
	$p_{-k_1m-k_2n,k_1m+k_2n}(\p)\neq 0$. Thus $k_1m+k_2n\in Supp_1(\LL)$. 

\end{proof}
\begin{pro}{\label{p4.21}}Suppose that $L[k]$ is of type II for some $k\in Supp_1(\LL)$.  Then $\LL \cong CL_3(s)$ for some $s\in \C$.
\end{pro}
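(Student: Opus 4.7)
My plan is to combine Corollary 4.19, Lemma 4.20, and the preceding proposition (all $L[m]$ share one type) to pin down the full structure of $\LL$. First, by that preceding proposition, every $L[m]$ with $m \in Supp_1(\LL)$ is of type II, so Corollary 4.19 supplies a single $f \in \C[\M]$ with $p_{0,n}(\M) = n f(\M)$ for all $n \in Supp(\LL)$.

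Second, I reduce $\LL$ to a single connected grading. Lemma 4.20 applied repeatedly shows that $Supp_1(\LL) \cup \{0\}$ is closed under $\Z$-linear combinations, so $Supp_1(\LL) \cup \{0\} = d\Z$ for some positive integer $d$. To force $Supp(\LL) \subseteq d\Z$, I apply Lemma 4.1 with $S = d\Z$, i.e., $\mathcal{S} = \bigoplus_{i \in d\Z}\LL_i$ and $\mathcal{S}^{\perp} = \bigoplus_{i \notin d\Z}\LL_i$. The hypothesis $[\mathcal{S}_{(\M)}\mathcal{S}^{\perp}] \subset \mathcal{S}^{\perp}$ holds automatically because $d\Z$ is an additive subgroup of $\Z$. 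Simplicity of $\LL$ therefore forces either $\mathcal{S}^{\perp} = 0$ or $\LL_0 \subseteq [\mathcal{S}^{\perp}_{(\M)}\mathcal{S}^{\perp}]$; the latter alternative would yield some $i \notin d\Z$ with $p_{-i,i}(\p) \neq 0$, and Lemma 4.16 would then place $i \in Supp_1(\LL) \subseteq d\Z$, a contradiction. Hence $Supp(\LL) = d\Z$. Rescaling the grading so $d=1$, I may assume $Supp(\LL) = \Z$ and $1 \in Supp_1(\LL)$, with $L[1]$ of type II.

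Third, I rule out the non-$CL_3(s)$ type II classes for $L[1]$. For each of $L[1] \cong CL_2(0,s)$, $ECL(s)$, and $SCL_2(0,s)$, the shapes of $p_{-1,1}(\p)$, $p_{1,1}(\p,\M)$, and $f(\M)$ are completely explicit. Writing the Jacobi identity on the triple $(L_1, L_{-1}, L_2)$ in $\LL$ and matching coefficients in $\p, \M, \mu$ yields a polynomial identity whose only consistent solution is $f(\M) = \M - 2s$ together with $p_{-1,1}(\p) = -(\p+s)$, i.e., precisely the $CL_3(s)$ data. Hence $L[1] \cong CL_3(s)$. Since this subalgebra already has support $\Z$ and each of its rank-one graded components must coincide with the rank-one $\LL_i$, we conclude $\LL = L[1] \cong CL_3(s)$.

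The main obstacle lies in the third step: ruling out $SCL_2(0,s)$, where $\deg f = 2$ and $\deg p_{-1,1} = 0$, requires a delicate Jacobi computation exploiting the factorization $f(\M) = -(\M-s)(\M-2s)$, since this case has the most room to accommodate higher-degree polynomial relations before a degree mismatch is detected.
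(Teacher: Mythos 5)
Your first two steps reproduce the paper's own argument almost verbatim: the paper sets $\delta=\gcd(Supp_1(\LL))$, obtains $\delta\in Supp_1(\LL)$ from Lemma~\ref{l4.20}, and eliminates components outside $\delta\Z$ exactly as you do, by combining Lemmas~\ref{l4.1} and \ref{l4.2} with Lemma~\ref{l4.16} (the only place non-integrality is used). The genuine gap is in your third step. The algebras $CL_2(0,s)$, $ECL(s)$ and $SCL_2(0,s)$ are honest Lie conformal algebras, so every Jacobi identity --- in particular the one on $(L_1,L_{-1},L_2)$ --- is already satisfied by their structure polynomials; no coefficient-matching carried out with the data of $L[1]$ alone can produce an inconsistency. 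Your claim that the ``only consistent solution'' is $f(\M)=\M-2s$ with $p_{-1,1}(\p)=-(\p+s)$ is therefore false: the $ECL(s)$ data (constant $f$, $\deg p_{-1,1}=2$) and the $SCL_2(0,s)$ data ($\deg f=2$, constant $p_{-1,1}$) are equally consistent solutions of every such identity. What the paper actually uses at this point is the graded simplicity of $\LL$: after identifying $\LL$ with the type~II algebra $L[\delta]$, it discards candidates possessing proper graded ideals ($SCL_2(0,s)\subset CL_2(0,s)$, and $ECL(s)$ has three such ideals). Your step~3 never invokes simplicity, so the exclusion has no valid mechanism behind it.

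Two further concrete problems. First, your closing inference ``$L[1]$ has support $\Z$ and rank-one components, hence its components coincide with the $\LL_i$ and $\LL=L[1]$'' does not follow: a rank-one $\C[\p]$-submodule of a rank-one free module can be proper (e.g.\ $\C[\p](\p+2s)L_0\subsetneq\C[\p]L_0$), which is exactly what happens when the generated subalgebra is of $SCL_2$-shape; the paper asserts $\LL=L[\delta]$ tersely, but your proposed justification is incorrect as stated. Second, the $SCL_2(0,s)$ possibility cannot be removed by any internal computation at all: $SCL_2(0,s)$ is itself simple graded, non-integral, has $1\in Supp_1$, and its $L[1]$ is of type~II, so it satisfies every hypothesis you are working under; an argument concluding that only the $CL_3(s)$ data survives a Jacobi-identity analysis is therefore necessarily flawed somewhere (the paper's own final sentence, ``since $\LL$ is simple graded, $\LL\cong CL_3(s)$'', is itself silent on precisely this case). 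So your third step needs to be replaced by a simplicity-based elimination of the candidates with proper graded ideals, together with an honest treatment of the $SCL_2(0,s)$ alternative, rather than a degree/coefficient computation.
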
 
\begin{proof}
  let $\delta=g.c.d(Supp_1(\LL))$, then  $\delta\in Supp_1(\LL)$ from Lemma \ref{l4.20}. If $Supp(\LL)\neq \delta\Z$, then set $\mathcal{M}=\bigoplus_{i\not\in \delta \Z}\C[\p]L_i$.  Then $\LL=L[\delta]\oplus\mathcal{M}$ and  $[L[\delta]_{(\M)}\mathcal{M}]\subset\mathcal{M}$.  Thus $[M_{(\M)}M]$ is an ideal of $\LL$.   Then  there exists some $L_{t}\in \mathcal{M}$ such that $p_{t,-t}(\p)\neq 0$. Thus $t\in Supp_1(\LL)$, which is a contradiction. Hence $\LL=L[\delta]$. Since $\LL$ is simple graded, $\LL \cong CL_3(s)$.
 \end{proof}
\begin{pro}{\label{p4.22}}Suppose that $L[k]$ is of type I for some $k\in Supp_1(\LL)$. Then the basic Lie algebra $\LL^0$ for $\LL$ is simple graded.  \end{pro}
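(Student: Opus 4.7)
The plan is to reduce simplicity of $\LL^0$ to establishing that every nonzero graded ideal $I$ of $\LL^0$ contains $L_0$. By Theorem~\ref{t4.14} and the type I hypothesis on $L[k]$, one has $p_{0,k}(\M)\in\C^*$, and Corollary~\ref{c4.19} then yields a nonzero constant $f\in\C^*$ with $p_{0,n}(\M)=nf$ for every $n\in Supp(\LL)$. Hence the induced bracket on $\LL^0=\bigoplus_{n\in Supp(\LL)}\C L_n$ satisfies $[L_i,L_j]=p_{i,j}(0,0)L_{i+j}$, and in particular $[L_0,L_m]=mfL_m$ is nonzero for every $m\neq 0$. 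Consequently, once $L_0\in I$ is shown, $L_m\in I$ for every nonzero $m\in Supp(\LL)$, which forces $I=\LL^0$.

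To obtain $L_0\in I$, we first establish a type I analog of Lemma~\ref{l4.20}: for $m,n\in Supp_1(\LL)$ with $k_1 m+k_2 n\neq 0$, the sum $k_1 m+k_2 n$ again belongs to $Supp_1(\LL)$. The argument is the same Jacobi identity computation as in Lemma~\ref{l4.20}, but now using that in type I the polynomial $p_{-k,k}(\p)$ is a nonzero constant (rather than having positive degree as in type II); this constant appears as a coefficient in the resulting identity and forces $p_{-(k_1 m+k_2 n),k_1 m+k_2 n}(\p)\neq 0$ whenever the sum does not vanish. Setting $\delta:=\gcd(Supp_1(\LL))$ then yields $Supp_1(\LL)=\delta\Z\setminus\{0\}$.

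Next, imitating Proposition~\ref{p4.21}, we show $Supp(\LL)=\delta\Z$. If not, then $\mathcal{M}:=\bigoplus_{i\notin\delta\Z}\LL_i$ is nonzero; the $\Z$-grading forces $[L[\delta]_{(\M)}\mathcal{M}]\subset\mathcal{M}$, so Lemma~\ref{l4.1} produces the nonzero graded ideal $\mathcal{M}+[\mathcal{M}_{(\M)}\mathcal{M}]$ of $\LL$. Simplicity forces this ideal to equal $\LL$, hence to contain $L[\delta]$, so some bracket $[{L_t}_{\M} L_{-t}]$ with $t\notin\delta\Z$ is nonzero. Lemma~\ref{l4.16} then places $t\in Supp_1(\LL)=\delta\Z$, a contradiction. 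Hence $Supp(\LL)=\delta\Z$, so every nonzero element of $Supp(\LL)$ belongs to $Supp_1(\LL)$.

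Finally, let $I=\bigoplus_{s\in S}\C L_s$ be a nonzero graded ideal of $\LL^0$ and pick any $s\in S$. If $s=0$, we are done by the setup paragraph above. Otherwise $s\in Supp_1(\LL)$ by the previous paragraph, so Proposition~\ref{p4.6} gives $-s\in Supp_1(\LL)$ and $p_{-s,s}(\p)\in\C^*$ (using that $L[s]$ is also of type I, by the common-type proposition immediately preceding Proposition~\ref{p4.18}). Thus the bracket $[L_{-s},L_s]_{\LL^0}=p_{-s,s}(0)L_0$ is a nonzero multiple of $L_0$, placing $L_0\in I$. The main obstacle is the type I analog of Lemma~\ref{l4.20}: one must verify, by careful bookkeeping of the Jacobi identity in the type I setting (where structure polynomials have degree at most one), that $p_{-(k_1 m+k_2 n),k_1 m+k_2 n}(\p)$ is indeed forced to be nonzero under these constraints.
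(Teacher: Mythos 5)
Your reduction (prove $L_0\in I$, then use $[L_0,L_m]=mf\,L_m\neq 0$ to force $I=\LL^0$) is sound, and the observation that $L_0\notin I$ for a proper graded ideal matches the paper's starting point via Corollary~\ref{c4.19}. However, the route you take to put $L_0$ into $I$ rests on intermediate claims that are false in the type I setting. The ``type I analog of Lemma~\ref{l4.20}'' cannot be obtained ``by the same Jacobi identity computation'': that computation manipulates the elements $L_{k_1m},L_{-k_1m},L_{k_2n}$, whose existence is automatic in type II because a type II $\LL[m]$ has support all of $m\Z$, whereas a type I $\LL[m]$ may be $Cursl(2,\C)$, with support only $\{-m,0,m\}$. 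Concretely, take $\LL=Cur\mathcal{W}_1$ with $\mathcal{W}_1$ the centerless Virasoro algebra: it is simple graded, non-integral, has $\LL[1]\cong Cursl(2,\C)$ of type I (so it satisfies every hypothesis of the proposition, and indeed it is one of the outputs of Theorem~\ref{t4.23}), yet $Supp(\LL)=\Z_{\geq -1}$ and $Supp_1(\LL)=\{\pm 1\}$. Hence neither $Supp_1(\LL)=\delta\Z\setminus\{0\}$ nor $Supp(\LL)=\delta\Z$ holds, and your final step collapses: for $s\geq 2$ in the support of a putative ideal there is no component $\LL_{-s}$ at all, so no bracket $p_{-s,s}(0)L_0$ is available. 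In effect your argument would ``prove'' that the support is a full lattice, contradicting a case the proposition must cover, so the gap you flagged at the end is not a bookkeeping issue but a genuine failure of the claimed analog.

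The paper avoids ever having to show that each degree lies in $Supp_1(\LL)$. Given a nonzero proper graded ideal $I$ of $\LL^0$ (so $L_0\notin I$ by Corollary~\ref{c4.19}), graded simplicity of $\LL$ produces a chain $k_1,\dots,k_n$ with $k_1+\cdots+k_n=-t$, $L_t\in I$, and all intermediate structure polynomials nonzero; one takes such a chain of minimal length $n_0$. The case $n_0=1$ is impossible (it would give $p_{-t_0,t_0}\in\C^*$, hence $L_0\in I$), and for $n_0>1$ either $p_{k_1,t_0}(0,0)\neq 0$, so $L_{k_1+t_0}\in I$ and the chain shortens, or Lemma~\ref{l4.3} forces $p_{k_1,t_0}(\p,\M)=c_{k_1,t_0}\bigl(k_1\p+(k_1+t_0)\M\bigr)$ and the Jacobi identity with $L_{k_2}$ yields $p_{k_1+k_2,t_0}\neq 0$, again shortening the chain; either way the minimality of $n_0$ is violated. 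If you want to keep your strategy, you would have to treat the half-lattice supports (the $\mathcal{W}_1$-type situation) separately, which is precisely what the paper's minimal-chain argument handles uniformly.
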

\begin{proof}Let $I$ be a non-zero proper graded ideal of $\LL^0$. By Corollary \ref{c4.19}, we know that $L_0\not\in I$. Set	
	\begin{equation*}
		\begin{aligned}
				&S:=\{n\in \Z|\text{there exists some}\  L_t\in I\  \text{and}\   k_1,k_2,\cdots,k_n\in \Z\  \text{such that}\\  k_1+k_2&+\cdots+k_n=-t \ 
		\text{and}\  p_{k_i,k_{i-1}+k_{i-2}+\cdots+k_1+t}(\partial,\lambda)\neq 0\  \text{for}\  i=1,2,\cdots n.\}
		\end{aligned}
	\end{equation*}
	   Since $\LL$ is simple graded, $S$ is not empty. Let $n_0$ be the minimal of positive integer of $S$, then there exists $k_1,k_2,\cdots,k_{n_0}\in \Z$ and $L_{t_0}\in I$ satisfy the conditions in set $S$.  Clearly $n_0>1$ otherwise $p_{-t_0,t_0}(\p)\in \C^*$. If $p_{k_1,t_0}(0,0)\in \C^*$, then $L_{k_1+t_0}\in I$, which is contradict to the choice of $n_0$. Hence by Lemma {\ref{l4.3}}, we can have $ p_{k_1,t_0}(\partial,\M)=c_{k_1,t_0}(k_1\p+(k_1+t_0)\M)$  for some $c_{k_1,t_0}\in \C^* $. Hence, let us  first consider the Jacobi-identity of  $L_{k_2},L_{k_1}$ and $L_{t_0}$. we can see that  $p_{k_1+k_2,t_0}(\p,\M)\neq 0$, which is contradict to the minimality of $n_0$.  Thus $I=\{0\}$ and $\LL^0$ is simple graded.
\end{proof}
\begin{theorem}{\label{t4.23}}Suppose that $\LL$ is non-integral. Then $\LL$ is isomorphic to $CL_3(s)$ or $Cur\mathcal{W}_1$ or $Cur\mathcal{W}$, where  $\mathcal{W}$ is  Witt algebra and ${W}_1$ is centerless Virasoro Lie algebra. 
\end{theorem}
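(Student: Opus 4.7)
The plan is to split along the type of $\LL[k]$ for some $k \in Supp_1(\LL)$. By Lemma \ref{l4.2} such $k$ exists, and by Theorem \ref{t4.14} $\LL[k]$ is either of type I or of type II. If $\LL[k]$ is of type II for some choice of $k$, Proposition \ref{p4.21} immediately gives $\LL \cong CL_3(s)$. Henceforth I would assume that $\LL[k]$ is of type I for every $k \in Supp_1(\LL)$ and aim to show $\LL$ is isomorphic to either $Cur\mathcal{W}$ or $Cur\mathcal{W}_1$.

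In this second case, Proposition \ref{p4.22} gives that $\LL^0$ is simple graded. Because $\LL[k]$ is of type I, $p_{0,k}(\M) \in \C^*$ is a nonzero constant, so Corollary \ref{c4.19} upgrades to $p_{0,n}(\M) = cn$ for every $n \in Supp(\LL)$, with a fixed $c \in \C^*$. Combined with Lemma \ref{l4.3}(3), this forces
\[ p_{i,j}(\p, \M) = c_{ij}\bigl(i\p + (i+j)\M\bigr) + d_{ij}, \qquad c_{ij}, d_{ij} \in \C, \]
for every $i, j, i+j$ nonzero in $Supp(\LL)$, where $d_{ij} = p_{i,j}(0,0)$ realises the bracket coefficient $[\bar L_i, \bar L_j]$ in $\LL^0$. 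Passing to $\LL^0$: each $\LL^0_n$ has dimension at most one, and $\bar L_0$ acts on $\LL^0_n$ by the scalar $cn$. By the classical classification of $\Z$-graded simple Lie algebras with one-dimensional components and a diagonalisable grading derivation, $\LL^0$ is isomorphic to $\mathcal{W}$ (when $Supp(\LL) = \Z$) or to $\mathcal{W}_1$ (when $Supp(\LL)$ is bounded on one side, which after possible re-grading may be taken to be $\Z_{\geq -1}$).

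To finish, it remains to show that all $c_{ij}$ vanish; this will upgrade the $\C[\partial]$-module isomorphism $\LL \to \C[\partial] \otimes \LL^0$ sending $L_n \mapsto 1 \otimes \bar L_n$ to a Lie conformal algebra isomorphism $\LL \cong Cur\LL^0$. The key step is to prove $\LL[k] \cong Cursl(2,\C)$ for every $k \in Supp_1(\LL)$: both $M(1)$ and $M(2)$ possess proper graded ideals (for instance $\bigoplus_{j \geq 2}\C[\partial]L_{jk}$, immediate from \eqref{eq2-8}), so if $\LL[k]$ were isomorphic to $M(1)$ or $M(2)$ then simplicity of $\LL$ would demand that some $\LL_m$ with $m \notin k\Z$ (resp.\ $m < -k$) moves these ideals; applying the Jacobi identity to $L_k, L_k, L_m$ (resp.\ $L_{-k}, L_{-k}, L_m$) and using the explicit form $p_{k,k}(\p,\M) = c_{k,k}k(\p + 2\M)$ yields polynomial identities inconsistent with $c_{k,k} \neq 0$. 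Hence $c_{k,k} = c_{-k,-k} = 0$ and $\LL[k] \cong Cursl(2,\C)$. With this base case, an induction via Jacobi on triples $L_i, L_k, L_j$, leveraging the Witt bracket structure of $\LL^0$, propagates $c_{ij} = 0$ to every admissible pair. The main obstacle is this last piece of Jacobi bookkeeping: the non-constant summand $c_{ij}(i\p + (i+j)\M)$ must be forced to vanish against a backdrop of infinitely many components when $\LL^0 \cong \mathcal{W}$, and this is where the simple-gradedness of $\LL$ and the explicit Witt structure on $\LL^0$ are both essential.
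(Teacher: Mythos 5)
Your reduction follows the paper's skeleton: if some $\LL[k]$ is of type II, Proposition \ref{p4.21} gives $CL_3(s)$, and in the type I case Proposition \ref{p4.22} plus the classification of simple graded Lie algebras of bound one identifies $\LL^0$ with $\mathcal{W}$ or $\mathcal{W}_1$; up to this point you and the paper agree. The genuine gap is the final, decisive step — that every structure polynomial is constant, so that $\LL\cong Cur\LL^0$. You reduce this to two claims: that $\LL[k]\cong Cursl(2,\C)$ for every $k\in Supp_1(\LL)$, and that an induction via the Jacobi identity then propagates $c_{ij}=0$ to all pairs. Neither is actually proved: the first rests on an appeal to unspecified "polynomial identities inconsistent with $c_{k,k}\neq 0$" (and note that the Jacobi identity of $L_k,L_k,L_m$ alone need not produce a contradiction, since in the $M(1)$, $M(2)$ patterns vanishing polynomials such as $p_{-k,2k}=0$ can absorb the problematic terms), and the second you yourself flag as "the main obstacle" without carrying out the bookkeeping. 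Since this vanishing is precisely the content of the theorem in the type I case, the proposal as written does not establish it.

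For comparison, the paper's mechanism is more direct: if $\deg p_{m,n}(\p,\M)>0$ with $m\in Supp_1(\LL)$, then $p_{m,-m}$ and $p_{0,n}$ are nonzero constants and the Jacobi identity of $L_m$, $L_{-m}$, $L_n$ yields the contradiction; when $\LL^0\cong\mathcal{W}$ every nonzero $m\in Supp(\LL)$ lies in $Supp_1(\LL)$, because the Witt relations force $p_{m,-m}(0,0)\neq 0$ and $p_{0,m}\neq 0$, which settles that case, and when $\LL^0\cong\mathcal{W}_1$ one takes $Supp(\LL)=\Z_{\geq -1}$ and reduces a general pair $m,n$ by repeatedly applying the adjoint action of $L_{-1}$, as in Proposition \ref{p4.22}. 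Note also that your preliminary exclusion of $M(1)$ and $M(2)$ is unnecessary once $\LL^0\cong\mathcal{W}$ or $\mathcal{W}_1$ is known: those conformal algebras have $p_{-k,2k}=0$, which would force $[\bar L_{-k},\bar L_{2k}]=0$ in $\LL^0$, contradicting the Witt bracket; no ideal-moving argument is needed. If you want to salvage your route, the missing induction must be written out explicitly (with the $\mathcal{W}_1$ case handled separately, since there $Supp_1(\LL)=\{\pm 1\}$ and most pairs $m,n$ are not directly reachable by your base case).
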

\begin{proof}
By Proposition \ref{p4.21}, it is sufficient to consider that  $L[k]$ is of type I for some $k\in Supp_1(\LL)$. By Proposition \ref{p4.22}, $\LL^0$ is isomorphic to either Witt algebra $\mathcal{W}$ or centerless Virasoro Lie algebra $\mathcal{W}_1$.  Suppose that $deg p_{m,n}(\p,\M)>0$ for some $m,n\in Supp(\LL)$. If $m\in Supp_1(\LL)$, then we can have a contradiction to  the Jacobi-identity of $L_m$,$L_{-m}$ and $L_{n}$. If $\LL^0\cong \mathcal{W}$,  for each nonzero $m\in Supp(\LL)$, we have $m\in Supp_1(\LL)$.  Thus we only need to consider the case that $\LL^0\cong \mathcal{W}_1$. In this case, we may assume that $Supp(\LL)=\Z_{\geq -1}$. then applying the adjoint action of $L_{-1}$ for $m+n$ times and using the similar proof as in the Proposition {\ref{p4.22}}, we can also obtain a contradiction. Thus $\LL$ is a current Lie conformal algebra with $\LL^0$ either isomorphic to  $\mathcal{W}$ or to $\mathcal{W}_1$. 
\end{proof}
\subsection{$\LL$ is integral}

 In this part, we will deal with the case that $\LL$ is integral. In this case, from the Theorem \ref{t4.14}, we can see that $L[k]\cong Cursl(2,\C)$ for each $k\in Supp_1(\LL)$.
  \begin{pro}{\label{p4.24}}The following three statements are equivalent.
	\begin{enumerate}[fullwidth,itemindent=0em,label=(\arabic*)]
		\item $\LL$ is integral.
		\item  For each $k\in Supp_1(L)$, the adjoint action of each  $L_k$ and $L_{-k}$ is locally nilpotent.
		\item  For each $k\in Supp(\LL)$, regard $\LL$ as an  $L[k]$-module in terms of adjoint action. Then for each  $j\in Supp(\LL)$,  $L_j$ is contained in a finite  $L[k]$-submodule of $\LL$.
	\end{enumerate}
\end{pro}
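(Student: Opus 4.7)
My plan is to prove the cyclic implications $(1) \Rightarrow (2) \Rightarrow (3) \Rightarrow (1)$, with the remaining equivalences following. The structural input throughout is Corollary~\ref{c4.19}, which writes $p_{0,n}(\lambda) = n f(\lambda)$ for a fixed nonzero $f \in \C[\lambda]$; under this, integrality of $\LL$ becomes the combinatorial statement that $Supp(\LL)$ contains no infinite one-sided arithmetic progression $\{j + mk : m \geq M\}$ with $k \neq 0$.

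For $(1) \Rightarrow (2)$, I will fix $k \in Supp_1(\LL)$ and $L_j$. Integrality supplies some $N$ with $j + Nk \notin Supp(\LL)$, so $\LL_{j+Nk} = 0$ and every length-$N$ iterated $L_k$-bracket on $L_j$ vanishes; the argument for $L_{-k}$ is symmetric.

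For $(2) \Rightarrow (3)$, let $M_j$ denote the $L[k]$-submodule of $\LL$ generated by $L_j$. Condition $(2)$ rules out every alternative in Theorem~\ref{t4.14} other than $L[k] \cong Cursl(2,\C)$: in each of $M(1), M(2)$ and in every type II case, direct computation shows $[L_k {}_\lambda L_k] \neq 0$, so iterating $L_k$ on $L_k$ itself never vanishes and local nilpotency fails. Hence $L[k] \cong Cursl(2,\C)$ and $L[k]^0 \cong sl(2,\C)$. Passing to the basic Lie algebra $\LL^0$, local nilpotency of the images of $L_{\pm k}$ gives $sl(2)$-integrability, so the $sl(2)$-submodule generated by $\bar L_j$ is finite-dimensional, supported in finitely many of the one-dimensional spaces $\C \bar L_{j+pk}$. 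Because each $\LL_{j+pk}$ has $\C[\p]$-rank at most one, this finite support lifts to $M_j$, which is then finitely generated as a $\C[\p]$-module.

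For $(3) \Rightarrow (1)$, I will argue by contrapositive. Assume $\LL$ is non-integral, so $\{an + b : n \geq N\} \subseteq Supp(\LL)$ for some $a \neq 0$. Picking any $k \in Supp_1(\LL)$ and using that $p_{0, an+b}(\lambda) = (an+b) f(\lambda)$ is nonzero throughout the progression, Lemma~\ref{l4.3} applied inductively to the Jacobi identities among $L_k, L_{-k}, L_0$ and the $L_{an+b}$ propagates nonvanishing structure polynomials along the progression, yielding some $L_j$ whose $L[k]$-submodule has infinite rank and contradicting $(3)$. The hardest step will be $(2) \Rightarrow (3)$: verifying that no graded component of $M_j$ with vanishing image in $\LL^0$ can itself be nonzero, which I plan to resolve by tracking explicit $\C[\p]$-generators of each graded piece of $M_j$ arising from iterated brackets and exploiting the rank-one structure of each $\LL_{j+pk}$.
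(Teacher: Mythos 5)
Your outline leans on Corollary \ref{c4.19} ``throughout'', but in the paper that corollary is established only under the non-integrality hypothesis (its proof runs through Lemma \ref{l4.16}, whose proof begins by assuming $\LL$ is non-integral), so it is not available in the directions that start from integrality --- and it is in fact false for integral $\LL$. Take $\LL=Cur\mathfrak{g}$ with $\mathfrak{g}=\mathfrak{sl}(2,\C)\otimes\C[t,t^{-1}]$ graded by $\deg(e\otimes t^n)=3n+1$, $\deg(h\otimes t^n)=3n$, $\deg(f\otimes t^n)=3n-1$: this is a graded simple Lie conformal algebra with upper bound one and abelian $\LL_0$, it is integral (along any progression $an+b$ the polynomials $p_{0,an+b}$ take only the values $0,\pm2$, so they are never of the form $f(\lambda)n+g(\lambda)$ with $f\neq0$), yet $Supp(\LL)=\Z$ and $p_{0,m}$ is not of the form $mf(\lambda)$. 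This simultaneously refutes your reformulation of integrality as ``$Supp(\LL)$ contains no infinite one-sided arithmetic progression'' and the opening move of $(1)\Rightarrow(2)$, where you claim integrality supplies $N$ with $j+Nk\notin Supp(\LL)$: in the example no such $N$ exists for any $j,k$, although the adjoint actions are locally nilpotent (the brackets die, not the support). The correct mechanism is Lemma \ref{l4.3}: if $ad({L_k}_{(\lambda)})$ failed to be locally nilpotent on $L_j$, then $p_{k,j+mk}\neq0$ for all $m\geq0$, and \eqref{eq4-1} gives $p_{0,j+mk}=p_{0,j}+mp_{0,k}$ with $p_{0,k}\neq0$ (since $k\in Supp_1(\LL)$), which is exactly non-integrality with $a=k$, $b=j$; your argument never uses this additivity where it is needed.

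The step $(2)\Rightarrow(3)$ is also not closed, precisely at the point you flag: finite dimensionality of the $\mathfrak{sl}(2,\C)$-submodule of $\LL^0$ generated by the image of $L_j$ does not control the $L[k]$-submodule $M_j\subset\LL$, because the $n$-th products with $n\geq1$ are invisible in $\LL^0$, a nonzero graded component of $M_j$ contained in $\p\C[\p]L_{j+pk}$ has zero image in $\LL^0$, and local nilpotency of $L_k$ and of $L_{-k}$ separately does not bound the net drift of mixed words in $L_{\pm k}$. Note that the paper states Proposition \ref{p4.24} without proof; the missing content is exactly what Lemma \ref{l4.25} supplies by working directly with structure polynomials: a Jacobi-identity analysis showing the $k$-string through $L_j$ terminates on both sides with $p_{k,tk+j}p_{-k,(t+1)k+j}\in\C^*$ along it, so that the submodule is $M_{V^0}$, free of finite rank. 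A repaired proof should run $(1)\Rightarrow(2)$ via the additivity above, $(2)\Rightarrow(3)$ via such a string analysis rather than via $\LL^0$, and $(3)\Rightarrow(1)$ by contraposition, where Corollary \ref{c4.19} is legitimately available but where you must actually exhibit an infinite-rank cyclic $L[k]$-submodule instead of asserting that nonvanishing ``propagates''.
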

\begin{lem}{\label{l4.25}} Suppose that $L[k]\cong Cur \mathfrak{sl}(2,\C)$ for some $k \in Supp_1(\LL)$. Regard $\LL$ as a  $L[k]$-module in terms of adjoint action. Then for each $j\in Supp(\LL)$, $L_j$ is contained in a finite irreducible $L[k]$-submodule of $\LL$. In particular, $p_{k,j}(\p,\M)\in \C$ for each $j\in Supp(\LL)$.
	\end{lem}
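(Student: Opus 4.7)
The plan is to combine the finiteness provided by Proposition \ref{p4.24} with finite-dimensional $\mathfrak{sl}_2$-representation theory applied to the action of $L[k]$ on the submodule generated by $L_j$. First I would apply Proposition \ref{p4.24}(3) to obtain a finite $L[k]$-submodule $N\subset\LL$ containing $L_j$. Since each $\LL_i$ has rank at most one and only grades of the form $j+ik$ can appear in the $L[k]$-orbit of $L_j$, necessarily $N=\bigoplus_{i\in I}\C[\p]L_{j+ik}$ for some finite $I\subset\Z$.

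Next, rescale the isomorphism $L[k]\cong Cur\mathfrak{sl}(2,\C)$ so that $L_0,L_k,L_{-k}$ correspond to the standard $\mathfrak{sl}_2$-triple $h,e,f$; in particular $p_{0,k}(\M)=2$, $p_{0,-k}(\M)=-2$ and $p_{k,-k}(\p,\M)=1$. By Corollary \ref{c4.5}, $p_{0,j+ik}(\M)=w_i$ is a scalar, so $L_0$ acts on $\C[\p]L_{j+ik}$ as multiplication by $w_i$; the Jacobi identity applied to $L_0,L_k,L_{j+ik}$ forces $w_{i+1}=w_i+2$. Proposition \ref{p4.24}(2) guarantees that the adjoint actions of $L_k$ and $L_{-k}$ are locally nilpotent on $N$, and since $L_0$ acts semisimply with integer weights, $N$ behaves like an integrable $\mathfrak{sl}_2$-module over the base ring $\C[\p]$.

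To extract an irreducible submodule containing $L_j$, I would choose $v=L_{j+i_{\max}k}$ of maximal weight in $N$; by maximality $[{L_k}_\M v]=0$. The $L[k]$-submodule generated by $v$ is then spanned over $\C[\p]$ by the string $v,[{L_{-k}}_\M v],[{L_{-k}}_\M[{L_{-k}}_\M v]],\ldots$, which by finiteness and local nilpotency must terminate after $w_{i_{\max}}+1$ steps. An analogous decomposition of $N/\p N$ as a finite-dimensional $\mathfrak{sl}_2$-module, lifted back to $N$ via such highest-weight strings, exhibits $N$ as a direct sum of submodules of the form $M_{V_n}=\C[\p]\otimes V_n$, and $L_j$ lies in one of these finite irreducible summands.

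The main obstacle is ruling out ``twisted'' $L[k]$-submodules in which $L_k$ or $L_{-k}$ acts with non-trivial $\p$- or $\M$-dependence on the highest-weight string, i.e., ensuring that each irreducible summand really is the standard $M_{V_n}$ rather than a non-split extension. The essential tool is Lemma \ref{l4.3}(3), which constrains any degree-one structure polynomial to the rigid form $c'(\p+(c+1)\M)$; substituting this into the Jacobi identity for $L_k,L_{-k},L_{j+ik}$ along the string forces the relevant structure polynomials to collapse to scalars. Once this is settled, the ``in particular'' assertion $p_{k,j}(\p,\M)\in\C$ is immediate, since inside $\C[\p]\otimes V_n$ the action $[{L_k}_\M(1\otimes u)]=1\otimes(eu)$ is manifestly independent of $\p$ and $\M$.
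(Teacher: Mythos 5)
Your overall shape is the same as the paper's ($\mathfrak{sl}_2$-strings through $L_j$, a top vector killed by $L_k$, lowering operators, and Jacobi identities to force constant structure polynomials), but there are two genuine gaps at exactly the decisive points. First, you obtain the finite submodule $N\ni L_j$ by citing Proposition~\ref{p4.24}(3). The paper never proves Proposition~\ref{p4.24}, and its implication $(1)\Rightarrow(3)$ is essentially the finiteness content that Lemma~\ref{l4.25} is meant to establish (the paper's own proof uses only the local nilpotency in item $(2)$ and builds the finite irreducible submodule by hand), so your route is at best resting on an unproved statement and at worst circular. Moreover, even granting a finite $N$, your assertion that $N$ decomposes as a direct sum of standard current modules $M_{V_n}$ is a complete-reducibility claim for finite conformal modules over $Cur\,\mathfrak{sl}(2,\C)$, which is not automatic in the conformal category and is not justified in your sketch.

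Second, the ``lift the decomposition of $N/\p N$'' step is precisely where the difficulty lives, and you only gesture at it. The $\mathfrak{sl}_2$-action on $N/\p N$ sees only the values $p(0,0)$ of the structure polynomials, while the dangerous case is a degree-one polynomial of the form $c'(\p+(c+1)\M)$ from Lemma~\ref{l4.3}(3), which is nonzero in $\LL$ but vanishes at $(0,0)$; so strings can look broken mod $\p$ while being connected in $\LL$ and vice versa, and nothing in your argument rules out the lowering string from the top vector dying at some position strictly between the top and $L_j$. The paper handles exactly this scenario: starting from the top $nk+j$ it runs the Jacobi identity of $L_k,L_{-k},L_{tk+j}$ down the string to get $p_{k,tk+j}(\p,\M)\,p_{-k,(t+1)k+j}(\p,\M)\in\C^*$ (which simultaneously yields constancy, so no separate ``untwisting'' step is needed), and it invokes the integrality hypothesis to exclude a break at some $s$ with $0<s<n$, where a negative weight below the break would contradict integrality. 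Your proposal never uses integrality at all (except implicitly through the unproved Proposition~\ref{p4.24}), and the inductive Jacobi computation that you defer to ``the main obstacle'' is the actual proof; as written, the argument does not go through.
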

	\begin{proof}Since the action of $L_k$ on $\LL$ is locally nilpotent, there exists some $n\geq 0$ such that \[ad^n({L_k}_{(\lambda)})L_j\neq \{0\}\  \text{and}\  ad^{n+1}({L_k}_{(\lambda)})L_j\neq \{0\}.\] Similarly,   \[ad^m({L_{-k}}_{(\lambda)})L_j\neq \{0\}\  \text{and}\  ad^{m+1}({L_{-k}}_{(\lambda)})L_j\neq \{0\}.\] for some $m\geq 0$.  Then $p_{0,nk+j}(\p,\M)$ is a positive integer. Then from the Jacobi-identity of $L_k,L_{-k},L_{nk+j}$,  one can see that 
		\[p_{k,(n-1)k+j}(\p,\M)p_{-k,nk+j}(\p,\M)\in \C^*.\]  Let $s$ be the largest integer smaller than $n$ such that $p_{-k,sk+j}(\p,\M)=0$. Then $p_{0,sk+j}(\M)=-p_{0,nk+j}(\M)$ is negative. If $s>0$, then \[ p_{k,(s-1)k+j}(\p,\M)\neq 0\ \text{but}\   p_{k,(s-1)k+j}(0,0)=0. \] Hence  \[p_{0,(s-1)k+j}(\p,\M)=p_{0,sk+j}(\p,\M)-2<0.\] It implies that module generetad by $L_{(s-1)k+j}$ is non-integral, which is a contradiction.  It implies that $s\leq 0$, then $s=-m$. It implies that \[p_{k,tk+j}(\p,\M)p_{-k,(t+1)k+j}(\p,\M)\in \C^*\] for each $-m+1\leq t\leq n-1$.  It means  that $V=M_{V^0}$(see Example 2.3). Hence $V$ is a finite irreducible $L[k]$-module.
		\end{proof}
		Set $\LL[\Delta]$ be the subalgebra of $\mathcal{L}$ generated by $\{L_0,L_k,L_{-k}|k\in Supp_1(\LL)\}$.
	\begin{lem}{\label{l4.26}}  $\LL[\Delta]=\bigoplus_{i\in S}\LL_i$ for some $S\subset Supp(\LL)$. In addition,  \[[\LL[\Delta]_{(\M)} {\LL[\Delta]}^{\perp}]\subset {\LL[\Delta]}^{\perp}.\] \end{lem}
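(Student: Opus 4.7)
The plan is to derive both parts of the lemma from Lemma~\ref{l4.25}, which tells us that for every $k\in Supp_1(\LL)$ the structure polynomial $p_{k,j}(\p,\M)$ is a constant in $\C$ and that every $L_j\in\LL$ lies inside a finite irreducible $L[k]$-submodule of $\LL$. These two facts together let us compute nested $\lambda$-brackets of the generators of $\LL[\Delta]$ as honest scalar multiples of homogeneous basis elements, and let us import the representation theory of $\mathfrak{sl}(2,\C)$ through the isomorphism $L[k]\cong Cur\,\mathfrak{sl}(2,\C)$.

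For the first assertion I observe that $\LL[\Delta]$ is generated by homogeneous elements, and is therefore automatically a graded $\C[\p]$-submodule of $\LL$. Every element of $\LL[\Delta]$ is a $\C[\p]$-combination of iterated $\M$-brackets of $L_0$ and of $L_{\pm k}$ with $k\in Supp_1(\LL)$; Lemma~\ref{l4.25} forces every such nested bracket to collapse to a scalar multiple of $L_{k_1+\cdots+k_n}$ (possibly zero), so every non-zero intersection $\LL[\Delta]\cap\LL_i$ coincides with the full rank-one submodule $\C[\p]L_i=\LL_i$. Defining $S:=\{i\in Supp(\LL)\mid \LL_i\subset\LL[\Delta]\}$ then yields $\LL[\Delta]=\bigoplus_{i\in S}\LL_i$.

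For the second assertion I prove, by induction on the minimal length $\ell(i)$ of an iterated bracket expression for $L_i$ in terms of generators, the contrapositive statement that $i\in S$, $j\notin S$, and $p_{i,j}(\p,\M)\ne 0$ together imply $i+j\notin S$. The base case $\ell(i)=0$ splits into $i=0$, for which $i+j=j\notin S$ trivially, and $i=\pm k$ for some $k\in Supp_1(\LL)$; in the latter I suppose $i+j\in S$ for contradiction, so that $L_{i+j}\in\LL[\Delta]$. By Lemma~\ref{l4.25}, $L_j$ and $L_{i+j}$ lie in a common finite irreducible $L[k]$-submodule with $L_{i+j}$ one step above or below $L_j$, and standard $\mathfrak{sl}(2,\C)$-representation theory gives $[L_{-i}{}_\M L_{i+j}]=c'L_j$ with $c'\ne 0$, so $L_j\in\LL[\Delta]$, contradicting $j\notin S$. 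For the inductive step, I write $L_i$ as $c^{-1}[L_{k_1}{}_\nu L_{i-k_1}]$ for some $k_1\in\pm Supp_1(\LL)$ with $\ell(i-k_1)<\ell(i)$ (by Lemma~\ref{l4.25} this bracket is independent of $\nu$), and specialize the Jacobi identity at $\nu=0$ to obtain
\[
c\,[L_i{}_\M L_j]=[L_{k_1}{}_0[L_{i-k_1}{}_\M L_j]]-p_{k_1,j}\,[L_{i-k_1}{}_\M L_{k_1+j}].
\]
A case analysis, applying the inductive hypothesis to $(i-k_1,j)$ and $(i-k_1,k_1+j)$ and the base case to $(k_1,j)$ and $(k_1,i-k_1+j)$, shows that each non-zero summand on the right already lies in $\LL[\Delta]^{\perp}\cap\LL_{i+j}$; because $\LL_{i+j}$ has rank at most one this forces $\LL_{i+j}\subset\LL[\Delta]^{\perp}$, i.e.\ $i+j\notin S$.

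The main obstacle is the sequencing of the case analysis in the inductive step: before invoking the base case on $(k_1,i-k_1+j)$ one must first derive $i-k_1+j\notin S$ via the inductive hypothesis applied to $(i-k_1,j)$, and before invoking the inductive hypothesis on $(i-k_1,k_1+j)$ one must first derive $k_1+j\notin S$ via the base case applied to $(k_1,j)$. Once this cascade of prerequisites is laid out, the rank-one property of each $\LL_i$ renders the final implication immediate.
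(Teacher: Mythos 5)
Your proof is correct, but for the key second assertion it takes a genuinely different route from the paper. The paper does not induct at all: using Lemma \ref{l4.25} it identifies $\LL$ with $M_{\LL^0}$ as an $L[k]$-module, observes that $\LL^0$ is an integrable $\mathfrak{sl}(2,\C)$-module so that $\LL$ is completely reducible over $L[k]\cong Cur\,\mathfrak{sl}(2,\C)$, and concludes that the graded complement $\LL[\Delta]^{\perp}$ is an $L[k]$-submodule for every $k\in Supp_1(\LL)$; since $\LL[\Delta]$ is generated by the $L[k]$'s, the inclusion $[\LL[\Delta]_{(\M)}\LL[\Delta]^{\perp}]\subset\LL[\Delta]^{\perp}$ follows in one stroke. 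Your induction on bracket length, with the Jacobi identity specialized at $\nu=0$ and the base case supplied by the elementary fact that $fe\,v\neq 0$ whenever $ev\neq 0$ in a finite-dimensional irreducible $\mathfrak{sl}(2,\C)$-module, trades Weyl's theorem for a combinatorial cascade: longer, but more self-contained, and your ordering of the two chains (first $(i-k_1,j)$ then $(k_1,i-k_1+j)$; first $(k_1,j)$ then $(i-k_1,k_1+j)$) is exactly what makes it close, since the vanishing of both chains would contradict $p_{i,j}\neq 0$. For the first assertion the paper also argues differently, recovering $L_s$ from $f(\p)L_s\in\LL[\Delta]$ by applying $L_{\pm k}$ and then $L_{\mp k}$ inside the nontrivial finite $L[k]$-module containing $L_s$. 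One caution on your version: an arbitrary nested bracket of two non-generator elements of $\LL[\Delta]$ need \emph{not} collapse to a constant multiple of a basis vector, because $p_{m,m'}(\p,\M)$ for $m,m'\notin\{0\}\cup\pm Supp_1(\LL)$ may have positive $\p$-degree; your collapsing argument therefore relies on the (standard, but unstated) fact that the subalgebra generated by a set is the $\C[\p]$-span of right-normed products in which the generators are applied successively on the left, where Lemma \ref{l4.25} does force constancy at each step. With that reduction made explicit, your argument is complete.
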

	\begin{proof}Let $f(\p)L_s\in \LL[\Delta]$ for some non-zero $f(\p)\in \C[\p]$ and $s\in S$. Since $ rank(\LL_i)=1$ for each $i\in S$, $L_s$ must be contained in a non-trivial finite $\LL[k]$-module  for some $k\in Supp_1(\LL)$. Thus the Jacobi-identity of $L_k$, $L_{-k}$ and $L_s$ implies that either
		\[p_{k,s}(\p,\M)p_{-k,k+s}(\p,\M)\in \C^*\  \text{or}\  p_{-k,s}(\p,\M)p_{k,-k+s}(\p,\M)\in \C^*.   \] 
	In both cases, we always have $L_s\in \LL[\Delta]$. 
	Hence  $\LL[\Delta]=\bigoplus_{i\in S}\LL_i$ for some $S\subset Supp(\LL)$.
	
		 Let $k\in Supp_1(\LL)$. Regard $\LL$ as an $\LL[k]$-module by the adjoint action.  By  Lemma \ref{l4.25},  $\LL$ is isomorphic to  $M_{\LL^0}$ as an $\LL[k]$-module.   Since  $L[k]\cong Cursl(2,\C)$ and $\LL^0$ is integrable as an $\mathfrak{sl}(2,\C)$-module,  $\LL^0$ is completely irreducible.  Hence $\LL$ is completely irreducible as an  $L[k]$-module of $\LL$. Since  $\LL[\Delta]$ is   an  $\LL[k]$-submodule,  ${\LL[\Delta]}^{\perp}$ is  an  $\LL[k]$-submodule of $\LL$. Thus  $[\LL[\Delta]_{(\M)} {\LL[\Delta]}^{\perp}]\subset {\LL[\Delta]}^{\perp}$.
\end{proof}
	\begin{pro}{\label{2.2}}$\LL=\LL[\Delta]$.\end{pro}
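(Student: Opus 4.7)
The plan is to argue by contradiction using Lemmas~\ref{l4.26} and~\ref{l4.1}. Suppose $\LL[\Delta]^\perp\neq 0$. Lemma~\ref{l4.26} supplies $[\LL[\Delta]_{(\M)}\LL[\Delta]^\perp]\subset\LL[\Delta]^\perp$, so Lemma~\ref{l4.1} (with $\mathcal{S}=\LL[\Delta]$) shows that $\LL[\Delta]^\perp+[\LL[\Delta]^\perp_{(\M)}\LL[\Delta]^\perp]$ is a non-zero graded ideal of $\LL$. By simplicity of $\LL$ this ideal is all of $\LL$, hence $\LL[\Delta]\subset[\LL[\Delta]^\perp_{(\M)}\LL[\Delta]^\perp]$; in particular $L_0$ lies in this bracket.

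From here I would localize on $L_0$. Since brackets respect gradation and $\LL[\Delta]^\perp=\bigoplus_{j\notin S}\LL_j$, the element $L_0$ can only arise from pairs $[\LL_{i\,(\M)}\LL_{-i}]$ with both $i,-i\notin S$. Because $0\in S$, any such $i$ is nonzero and satisfies $p_{i,-i}(\p,\M)\neq 0$; skew-symmetry then gives $p_{-i,i}(\p,\M)\neq 0$ as well. The goal is to deduce that $i\in Supp_1(\LL)$, which would force $L_i\in\LL[\Delta]$ and $i\in S$, contradicting $i\notin S$.

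The main obstacle is showing $p_{0,i}(\M)\neq 0$; combined with $p_{-i,i}\neq 0$ this is exactly the condition defining $i\in Supp_1(\LL)$. A direct appeal to Corollary~\ref{c4.19} would do it, but that corollary is proved via Lemma~\ref{l4.16}, whose proof explicitly uses non-integrality, so it cannot be cited verbatim in the present integral setting. I would therefore establish an integral analogue, namely $p_{0,n}(\M)=nf(\M)$ for all $n\in Supp(\LL)$, by combining Proposition~\ref{p4.18} (already giving the ratio $p_{0,k}/k=f$ for $k\in Supp_1(\LL)$) with Lemma~\ref{l4.25}: for any $n\in Supp(\LL)$, the vector $L_n$ sits in a finite irreducible $\LL[k]$-submodule, so $p_{0,n}(\M)$ is a constant integer, and tracing the $\mathfrak{sl}(2,\C)$-string through $L_n$ and comparing the grade shift $k$ with the weight shift $p_{0,k}$ propagates the ratio $p_{0,n}/n=f$ to the whole support. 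With this extended formula in hand, $p_{0,i}=if\neq 0$ for any nonzero $i\in Supp(\LL)$, and the contradiction is complete.
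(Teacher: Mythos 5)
Your opening reduction is sound: Lemma \ref{l4.26} plus Lemma \ref{l4.1} (with $\mathcal{S}=\LL[\Delta]$, which is of the required form $\bigoplus_{i\in S}\LL_i$) does give, under the assumption $\LL[\Delta]^{\perp}\neq 0$, that $L_0\in[\LL[\Delta]^{\perp}{}_{(\M)}\LL[\Delta]^{\perp}]$, hence some $i\notin S$ with $p_{i,-i}(\p,\M)\neq 0$. But the step you yourself flag as the main obstacle is where the proof breaks down. The proposed ``integral analogue'' $p_{0,n}(\M)=nf(\M)$ for all $n\in Supp(\LL)$ is false; indeed it cannot hold in the integral setting, because if $p_{0,n}=nf(\M)$ with $f\neq 0$ along the support, then along an arithmetic progression one gets $p_{0,an+b}(\M)=af(\M)\,n+bf(\M)$, which is precisely the definition of non-integrality. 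Concretely, take $\LL=Cur\,\mathfrak{g}$ with $\mathfrak{g}=sl(2,\C)\otimes\C[t,t^{-1}]$ graded by $\deg(e\otimes t^m)=3m+1$, $\deg(h\otimes t^m)=3m$, $\deg(f\otimes t^m)=3m-1$: this is a simple graded, integral, bound-one conformal algebra with abelian $\LL_0$, and $p_{0,n}$ equals $2,-2,0$ according to $n\bmod 3$ (so $p_{0,3}=0$ while $p_{0,1}=2$), periodic rather than linear in $n$. The $\mathfrak{sl}(2,\C)$-string argument only controls weight \emph{differences} inside one finite irreducible $\LL[k]$-module (Lemma \ref{l4.25} gives $p_{0,n+k}=p_{0,n}+p_{0,k}$ along a string); it never anchors $p_{0,n}$ to $n$, so the claimed propagation of the ratio $p_{0,n}/n$ does not occur. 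Consequently the crucial assertion that your index $i$ with $p_{i,-i}\neq 0$ satisfies $p_{0,i}\neq 0$, i.e.\ $i\in Supp_1(\LL)$, remains unproved --- this is exactly the point where Lemma \ref{l4.16} used non-integrality, and no substitute is supplied.

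The paper circumvents this entirely: rather than extending Corollary \ref{c4.19}, it takes a graded maximal ideal $I$ of the basic Lie algebra $\LL^0$, splits into the cases $\dim(\LL^0/I)=1$ and $\LL^0/I$ simple graded (using \cite[Lemma 15]{M2} to identify $\C[\p]I$ with $\LL[\Delta]^{\perp}$ in the second case), and in each case uses Lemma \ref{l4.2} together with a Jacobi identity among $L_k$, $L_{t-k}$, $L_{-t}$ to produce indices $k$ and $t-k$ lying in $Supp_1(\LL)$, which forces $L_t\in\LL[\Delta]$ and yields the contradiction without ever needing $p_{0,t}\neq 0$ itself. To salvage your route you would have to replace the false linear formula by an argument of this kind, showing that a nonzero degree-zero bracket coming from $\LL[\Delta]^{\perp}$ already drags some related indices into $Supp_1(\LL)$.
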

\begin{proof} $I$ be a graded maximal ideal of $\LL^0$.  Then $\LL^0/I$ is either simple graded or is one dimensional. 
	
	Suppose that $dim(\LL^0/I)=1$ with basis $L_k$ for some $k\in Supp(\LL)$. Clearly, $k\neq 0$ and $L_0\in I$. Hence $\LL[\Delta]\subset I$. Then $[\LL[\Delta]_{(\M)} L_k]=\{0\}$. 
	It impiles that the set \[\mathcal{S}:=\{u\in\LL|[\LL[\Delta]_{(\M)}u ]=\{0\}\}\] is not empty. On the other hand, we have
	   $[\mathcal{S}_{(\lambda)}\mathcal{S}^{\perp}]\subset \mathcal{S}^{\perp}$. Thus by Lemma \ref{l4.2}, there exists some $s\in Supp_1(\LL)$ such that $p_{0,s}(\M)\neq 0$ and  $p_{s,k-s}(0,\M)\neq 0$.  However, by Lemma \ref{l4.3}, we can see that $p_{s,k-s}(\p,\M) \in \C[\p]$ and $p_{s,k-s}(0,0)=0$ for $L_s,L_{k-s}\in I$, which is a contradiction. 
	   
	    Now let us suppose that $\LL^0/I$ is simple graded. In this case $L_0\not\in I$. Hence $C[\p]I \cap  \LL[\Delta]=\{0\}$.	Thus $I\subset  \LL[\Delta]^{\perp}$. By \cite[Lemma 15]{M2}, we know that $\LL^0/I$ is simple graded if and only if $C[\p]I=\LL[\Delta]^{\perp}$.  If  $\LL[\Delta]^{\perp}\neq \{0\}$, By Lemma \ref{l4.1} and Lemma \ref{l4.26}, there exists some $L_t\in I$ such that $ p_{t,-t}(0,\lambda)\neq 0$ and $p_{t,-t}(0,0)=0$ because $L_0\not\in I$.  By Lemma \ref{l4.2}, there exists some $k\in Supp(\LL)$ such that $p_{0,k}(\M)\neq 0$ and $p_{k,t-k}(\p,\M)\neq 0$. Now the Jacobi-identity of  $L_k$,$L_{t-k}$ and $L_{-t}$, we will find that $p_{k,-k}(\p)\neq 0$ and $p_{t-k,k-t}(\p)\neq 0$. Thus $k,t-k\in Supp_1(\LL)$. It implies that $L_t\in \LL[\Delta]$, which is impossible. Hence  $I=\{0\}$ and $\LL=\LL[\Delta]$. 
\end{proof}

\begin{theorem}{\label{t4.28}}If $\LL$ is integral, then $\LL\cong Cur\mathfrak{g}$ for some simple graded Lie algebra $\mathfrak{g}$. 
\end{theorem}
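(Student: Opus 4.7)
The plan is to establish that every structure polynomial $p_{i,j}(\p,\M)$ is a scalar in $\C$. Once this is done, the $\C[\p]$-linear map $\phi\colon\mathrm{Cur}\,\LL^0\to\LL$ sending $\bar L_i\mapsto L_i$ is a graded Lie conformal algebra isomorphism: it is automatically a graded $\C[\p]$-module isomorphism, since each $\LL_i$ is free of rank one on $L_i$, and it becomes a $\M$-bracket preserving map precisely when all $p_{i,j}$ are constants. Since Proposition~\ref{p4.22} already delivers that $\mathfrak{g}:=\LL^0$ is simple graded (we are in the type I situation, as $L[k]\cong\mathrm{Cur}\,\mathfrak{sl}(2,\C)$ for every $k\in Supp_1(\LL)$), the theorem follows.

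The inputs I would carry over from earlier in the section are: $\LL=\LL[\Delta]$ is generated as a Lie conformal algebra by $\{L_0\}\cup\{L_{\pm k}:k\in Supp_1(\LL)\}$ (Proposition~\ref{2.2}); $p_{k,j}(\p,\M)\in\C$ for every $k\in Supp_1(\LL)$ and $j\in Supp(\LL)$ (Lemma~\ref{l4.25}); and $p_{0,j}(\M)\in\C$ for every $j\in Supp(\LL)$, which follows from Corollary~\ref{c4.19} together with the fact that each $L[k]$ is a current Lie conformal algebra. The constancy of $p_{i,j}$ then breaks into two sub-steps.

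First, I would prove $L_{i,(n)}L_j=0$ for every $n\geq 1$ and every $i,j\in Supp(\LL)$, equivalently $p_{i,j}(\p,\M)\in\C[\p]$ with no $\M$-dependence, by induction on a ``generation length'' for $i$ inside $\LL[\Delta]$. The base cases $i\in\{0\}\cup Supp_1(\LL)$ are immediate from the inputs. For the inductive step, write $L_i=c\cdot L_{k,(0)}L_{i'}$ with $k\in Supp_1$ and $i'$ of strictly shorter generation length, and use the $(n)$-product Jacobi identity
\[(L_{k,(0)}L_{i'})_{(n)}L_j \;=\; L_{k,(0)}\bigl(L_{i',(n)}L_j\bigr)-L_{i',(n)}\bigl(L_{k,(0)}L_j\bigr),\]
both of whose right-hand summands vanish by the inductive hypothesis, since $L_{k,(0)}L_j$ is a constant multiple of $L_{k+j}$. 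Second, with $p_{i,j}(\p,\M)=p_{i,j}(\p)$ in hand, applying Jacobi to $L_k,L_i,L_j$ for $k\in Supp_1$ and invoking constancy of every $p_{k,\cdot}$ collapses the identity to
\[p_{i,j}(\p+\M)\,p_{k,i+j}\;=\;p_{k,i}\,p_{k+i,j}(\p)+p_{k,j}\,p_{i,k+j}(\p);\]
the right hand side is independent of $\M$, so whenever some $k\in Supp_1$ satisfies $p_{k,i+j}\neq 0$ one concludes $p_{i,j}\in\C$. When $i+j=0$ this is automatic, because $p_{k,0}=-p_{0,k}\in\C^*$. When $i+j\neq 0$, suppose for contradiction that $p_{k,i+j}=0$ for every $k\in Supp_1$; a parallel induction on generation length (via Jacobi for $L_k,L_{l'},L_{i+j}$) forces $p_{l,i+j}=0$ for every $l\in Supp(\LL)\setminus\{0\}$, so that $\C[\p]L_{i+j}$ becomes a nontrivial proper graded ideal of $\LL$, contradicting simple gradedness.

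The main obstacle I anticipate is the bookkeeping around the generation length inside $\LL[\Delta]$: one must verify that every nonzero homogeneous $L_l\in\LL$ admits at least one expression $L_l=c\,L_{k,(0)}L_{l'}$ with $k\in Supp_1$, $l'\neq 0$, and $p_{k,l'}\neq 0$, so that both inductions above run without degenerate cases. This is where the structural hypothesis $\LL=\LL[\Delta]$ and the integrability of $\LL$ as an $L[k]$-module enter in an essential way. Once this organisational point is pinned down, the algebra reduces to the elementary Jacobi manipulations sketched above.
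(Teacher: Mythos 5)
Your proposal is correct in substance but organizes the work differently from the paper. The paper's proof is a one-shot ideal argument: it considers the set $S$ of degrees $i$ that receive a nonconstant structure polynomial $p_{j,i-j}(\p,\M)$, uses the Jacobi identity with $L_k$, $k\in Supp_1(\LL)$ (whose structure polynomials are constants by Lemma \ref{l4.25}) to propagate membership in $S$, concludes that $\bigoplus_{i\in S}\C[\p]L_i$ would be a nonzero graded ideal, and kills it by simple gradedness; combined with Proposition \ref{2.2} this makes $\LL=\LL[\Delta]$ a current algebra. You instead argue pair by pair: an induction on generation length inside $\LL[\Delta]$, via the $m=0$ case of axiom (C4), first removes the $\M$-dependence of every $p_{i,j}$, and then one Jacobi identity with a suitable $k\in Supp_1(\LL)$ removes the $\p$-dependence, the degenerate case $p_{k,i+j}=0$ for all $k\in Supp_1(\LL)$ being excluded because it would make $\C[\p]L_{i+j}$ a proper graded ideal. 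Both routes rest on the same pillars (Proposition \ref{2.2}, Lemma \ref{l4.25}, simplicity); yours is longer but makes explicit the propagation through the components that the paper's ideal argument leaves implicit (in particular why the bad set is stable under brackets with arbitrary components and why the resulting ideal is proper). Two points to pin down: the reachability you flag does hold --- since $\LL=\LL[\Delta]$ is spanned over $\C[\p]$ by right-normed iterated $(n)$-products of $L_0$, $L_{\pm k}$ (a standard consequence of (C4) plus sesquilinearity), factors $L_0$ do not change the degree and can be peeled off, and a nonvanishing outermost bracket with $L_{\pm k}$ forces the corresponding constant $p_{\pm k,\,\cdot}$ to be nonzero by Lemma \ref{l4.25}, so every $i\in Supp(\LL)$ is $k+i'$ with $k\in Supp_1(\LL)$, $p_{k,i'}\in\C^*$ and $i'$ of shorter length; and the input $p_{0,j}(\M)\in\C$ is better sourced from Corollary \ref{c4.5} together with $p_{0,k}(\M)\in\C^*$ (as $L[k]\cong Cur\mathfrak{sl}(2,\C)$), since Corollary \ref{c4.19} sits in the non-integral case and its proof uses non-integrality through Lemma \ref{l4.16}. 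With these repairs your argument is complete; the final fact that $\mathfrak{g}=\LL^0$ is simple graded can be taken from Proposition \ref{p4.22} as you do, or read off directly from the simple gradedness of $\LL=Cur\LL^0$.
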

\begin{proof}By Proposition \ref{2.2}, it suffice to prove that  $\LL[\Delta]$ is a current Lie conformal algebra. Let \[S=\{i\in Supp(\LL)|deg p_{j,i-j}(\partial,\lambda)>1,\  \text{for some}\  j\in Supp(\LL)\}.\]Suppose that $S$ is not empty. Suppose that $p_{k,s}(\p,\M)\neq 0$ for some $s\in S$ and $k\in Supp_1(\LL)$. Let $j\in Supp(\LL)$ such that $deg p_{j,s-j}(\partial,\lambda)>1$. Considering the Jacobi-identity of $L_k$, $L_j$ and $L_{s-j}$,  one can see that either $deg p_{k+j,s-j}>0$ or  $deg p_{j,k+s-j}>0$. It implies that $k+s\in S$. Thus  $\mathcal{S}:=\bigoplus_{i\in S}\C[\partial]L_i$ is a nonzero graded ideal of $\LL$, which is a contradiction. Hence  $\LL$ is a current Lie conformal algebra.
	\end{proof}
Now the Theorem {\ref{t01}} can be obtained from Theorem {\ref{t4.23}}, Theorem {\ref{t4.28}} and Proposition {\ref{Xu}}.

\end{document}